\def\stop{\textbf{stop}}
\def\go{\textbf{continue}}
\def\ab{\mathbf{a}}
\def\bb{\mathbf{b}}
\def\cb{\mathbf{c}}
\def\Bb{\mathbf{B}}
\def\eb{\mathbf{e}}
\def\yb{\mathbf{y}}
\def\ub{\mathbf{u}}
\def\vb{\mathbf{v}}
\def\xb{\mathbf{x}}
\def\wb{\mathbf{w}}
\def\Vb{\mathbf{V}}
\def\zerob{\mathbf{0}}
\def\epsilonb{\boldsymbol \epsilon}
\def\Exp{\mathbb{E}}
\def\Ibb{\mathbb{I}}
\def\Pbb{\mathbb{P}}
\def\Dbb{\mathbb{D}}
\def\Acal{\mathcal{A}}
\def\Bcal{\mathcal{B}}
\def\Fcal{\mathcal{F}}
\def\Xcal{\mathcal{X}}
\def\Halmos{$\square$}
\def\conv{\mathrm{conv}}
\def\ext{\mathrm{ext}}
\def\sign{\mathrm{sign}}
\def\Logistic{\mathrm{Logistic}}
\begin{document}
\RUNTITLE{Randomized Policy Optimization for Optimal Stopping}

\TITLE{Randomized Policy Optimization for Optimal Stopping}

\ARTICLEAUTHORS{%
\AUTHOR{Xinyi Guan}
\AFF{UCLA Anderson School of Management, University of California, Los Angeles, California 90095, United States, \EMAIL{xinyi.guan.phd@anderson.ucla.edu}}
\AUTHOR{Velibor V. Mi\v{s}i\'{c}}
\AFF{UCLA Anderson School of Management, University of California, Los Angeles, California 90095, United States, \EMAIL{velibor.misic@anderson.ucla.edu}} %

} %

\ABSTRACT{%
Optimal stopping is the problem of determining when to stop a stochastic system in order to maximize reward, which is of practical importance in domains such as finance, operations management and healthcare. Existing methods for high-dimensional optimal stopping that are popular in practice produce deterministic linear policies -- policies that deterministically stop based on the sign of a weighted sum of basis functions -- but are not guaranteed to find the optimal policy within this policy class given a fixed basis function architecture. In this paper, we propose a new methodology for optimal stopping based on \emph{randomized} linear policies, which choose to stop with a probability that is determined by a weighted sum of basis functions. We motivate these policies by establishing that under mild conditions, given a fixed basis function architecture, optimizing over randomized linear policies is equivalent to optimizing over deterministic linear policies. We formulate the problem of learning randomized linear policies from data as a smooth non-convex sample average approximation (SAA) problem.  We theoretically prove the almost sure convergence of our randomized policy SAA problem and establish bounds on the out-of-sample performance of randomized policies obtained from our SAA problem based on Rademacher complexity. We also show that the SAA problem is in general NP-Hard, and consequently develop a practical heuristic for solving our randomized policy problem. Through numerical experiments on a benchmark family of option pricing problem instances, we show that our approach can substantially outperform state-of-the-art methods.

}%

\KEYWORDS{optimal stopping, approximate dynamic programming, randomization, non-convex optimization, option pricing.}

\maketitle

\section{Introduction}
\label{sec:introduction}

Optimal stopping is the problem of deciding at what time to stop a stochastic system in order to maximize the expected reward. Specifically, we are given a stochastic system, that starts at an initial state and transitions randomly from one state to another in discrete time, and a reward function, which maps each state at each time to a real value. In each period, we must decide whether to stop the system, or allow it to continue for one more period. If we choose to stop the system, we obtain the reward given by the reward function for the current state; otherwise, we obtain no reward, but we may potentially stop the system at a later period for a higher reward. Our goal is to find a policy, which is a mapping from the state at each period to the decision to stop or continue, so as to maximize the expected reward. 

Optimal stopping problems are found in many application domains, such as finance, operations and healthcare. For example, in finance, an important application of optimal stopping is the problem of option pricing. In this problem, an option holder has the right to buy an asset (if it is a call option) or to sell an asset (if it is a put option) at some strike price. The stochastic system corresponds to the asset, and the system state corresponds to the asset's current price. The option holder's problem is to decide when to exercise the option, which is akin to stopping, so as to garner the greatest expected payoff. The price that an option writer should charge for the option is exactly the highest expected payoff that one can obtain from an optimal exercise policy of the option. As another example, in operations management, consider a firm that needs to decide when to introduce a new product to a market. In this problem, the system corresponds to market conditions, and the system state would correspond to (say) the unit production cost and the predicted market share that the product would capture, which evolve stochastically over time as more and more competitors enter this market. At each period, the firm can decide to introduce the product into the market, which corresponds to stopping the system, and the reward corresponds to the profit obtained from this market. The problem is then to find a policy that determines whether to introduce the product or wait, so as to maximize the profit from introducing the product. %

High-dimensional optimal stopping problems can in theory be solved exactly by dynamic programming. This approach involves obtaining the optimal value function, which maps the state at each period to the highest possible expected reward that can be attained conditional on starting at that state in that period, or the optimal continuation value function, which maps the state at each period to the highest possible expected reward that can be attained conditional on choosing to continue out of that state in that period. An optimal policy can then be found by considering the greedy policy with respect to the optimal value function or optimal continuation value function. However, this approach is untenable in practice for high-dimensional optimal stopping problems due to the curse of dimensionality. 

As a result, a number of approaches based on approximate dynamic programming (ADP) have been proposed to solve high-dimensional optimal stopping problems, wherein one considers a policy that is greedy with respect to an approximate value function or continuation value function. Of these methods, the most prevalent ADP method is the least squares Monte Carlo (LSM) approach proposed by \cite{longstaff2001valuing}. This approach involves simulating a set of sample paths or trajectories of the system, and then iterating from the last period in the horizon to the first. At each period $t$, one uses least squares to obtain a regression model that predicts the continuation value based on the current state, using the sample of trajectories. One then compares the prediction with the reward from stopping in the current period in each trajectory. If the reward from stopping is higher than the predicted continuation value, we choose to stop; otherwise, we choose to continue. Based on this decision, we update the continuation value, and we repeat the process again at period $t-1$. The algorithm continues in this way, until we reach the first period. The resulting policy is then to take the action that is greedy with respect to the approximate continuation value function. 

From a theoretical standpoint, if one were given an infinite sample of trajectories and one could solve the least squares problem at each stage of the LSM algorithm over an unrestricted function class, then the regression model that one would obtain would exactly coincide with the optimal continuation value function. This is due to the fact that the conditional expectation function $m(x) = \Exp[Y \mid X = x]$ minimizes squared error, i.e., it solves the optimization problem $\min_{m} \Exp[ (Y - m(X))^2]$. In such an idealized situation, the policy produced by LSM would indeed be optimal. 

In practice, one must work with a finite sample of trajectories, and the regression function is constrained to be within the span of a finite collection of basis functions that are specified by the decision maker. Thus, the policy that is produced by LSM is a policy in which one decides to stop or continue by comparing the reward to a weighted sum of basis functions. This is significant for two reasons: (i) it is no longer the case that the policy produced by LSM is an optimal policy; and (ii) even when we restrict our focus to the corresponding policy class that LSM operates in -- policies that stop if and only if the reward is greater than a weighted combination of basis functions -- the policy produced by LSM may not be optimal within that class. This occurs because in LSM, the approximate continuation value function is obtained by minimizing squared loss, which does not account for the fact that this approximation will be used as part of a policy, and ultimately does not guarantee good out-of-sample policy performance. 

This motivates the following question: \emph{how can one obtain LSM-like policies that perform better than LSM}? The policy produced by LSM belongs to a broader family of policies that we refer to as \emph{deterministic linear policies}: policies that deterministically recommend to stop or continue at each period depending on whether a weighted sum of basis functions is positive or negative. (This class subsumes LSM policies if one includes the immediate reward at each period as a basis function.) Given a sample of trajectories, an immediate approach to obtaining a good policy from this class would be to formulate a sample average approximation (SAA) problem: optimize over the weights defining the deterministic linear policy, so as to maximize the sample average estimate of the expected reward of the policy. The drawback of this approach is that due to the discrete nature of how this family of policies works, the SAA problem is a challenging discrete optimization problem. Such a problem would be infeasible to solve for the sample sizes that are typically found in practical optimal stopping applications. 

As an alternative to deterministic linear policies, one can also consider \emph{randomized linear policies}. These are policies that probabilistically choose to stop or continue at each period, where the probability of stopping is given by a logistic probability and the logit that defines this probability is a weighted sum of basis functions. Just like the deterministic linear policy case, one can also formulate an SAA problem to maximize the sample average reward with respect to the weights that define this randomized policy. Although the resulting SAA problem is still a challenging non-convex problem, the objective function is now smooth and from a computational standpoint, one can now at least solve the problem heuristically using any of a number of practically successful gradient-based methods.

In this paper, we propose a new methodology for solving optimal stopping problems from data that is based on optimizing over the class of randomized linear policies. We make the following specific contributions:
\begin{enumerate}
	\item \textbf{Model}: We propose the class of randomized linear policies for optimal stopping problems, and formulate the problem of learning such a policy from data as an SAA problem with a smooth, non-convex objective function. We prove that under mild conditions, solving the randomized linear policy SAA problem is equivalent to solving the deterministic linear policy SAA problem, in that the optimal objectives of the two problems are equivalent; under an additional condition, we also show that the true randomized linear policy problem and the true deterministic linear policy problem, where sample averages are replaced by expectations, are also equivalent in objective value. 
	\item \textbf{Statistical guarantees}: We provide two statistical guarantees for our randomized policy SAA problem. First, we show that our learning problem is consistent: as the number of trajectories in our training sample grows, the optimal objective value and optimal solution converge almost surely to the optimal objective value and optimal solution set, respectively, of the true stochastic optimization problem, where sample averages are replaced with expectations. Second, we develop a generalization bound on the out-of-sample objective value of a randomized policy obtained from our SAA problem based on Rademacher complexity, and develop several different bounds on the Rademacher complexity for different choices of the set of feasible weights. 
	\item \textbf{Heuristic}: We prove that in general, our randomized policy SAA problem is NP-Hard, which follows from a reduction from the MAX-3SAT problem. Consequently, we propose a backward optimization algorithm for solving the problem heuristically, which optimizes the weights defining the randomized policy in stages, starting with the weights corresponding to the last period and working its way to the first stage. 
	\item \textbf{Numerical experiments}: Using a benchmark family of Bermudan max-call option pricing instances used in the recent literature, we show that our approach yields policies that in general are substantially better than policies produced by LSM, and are as good or better than policies produced by the pathwise optimization method \citep{desai2012pathwise}, a state-of-the-art method based on martingale duality. 
\end{enumerate}

The rest of this paper is organized as follows. In Section~\ref{sec:literature_review}, we review the relevant literature in optimal stopping, as well as other recent related work. In Section~\ref{sec:problem_definition}, we formally define the optimal stopping problem, define the deterministic linear policy problem in its sample average and true stochastic forms, define the randomized linear policy problem in its sample average and true stochastic forms, and prove that the randomized linear policy problem and deterministic linear problem are equivalent. In Section~\ref{sec:statistical_properties}, we prove that our randomized policy SAA problem is consistent and develop our generalization guarantees. In Section~\ref{sec:solution_methodology}, we show that our randomized policy SAA problem is NP-Hard, and present our backward optimization algorithm for solving it. In Section~\ref{sec:numerics}, we present the results of our numerical study on option pricing instances. Lastly, in Section~\ref{sec:conclusion}, we conclude and discuss some potential directions for future research.

\section{Literature Review}
\label{sec:literature_review}

Our paper is closely related to three streams of research: the optimal stopping and ADP literature; prediction-and-optimization literature; and non-convex optimization literature.\\

\noindent \textbf{Optimal stopping and approximate dynamic programming (ADP).} Optimal stopping problems have been extensively studied in many fields such as statistics, operations research and mathematical finance. In theory, optimal stopping problems can be solved by dynamic programming, but in practice, the curse of dimensionality renders this approach infeasible for all but the simplest optimal stopping problems. As a result, there has been much attention towards developing good approximate dynamic programming (ADP) methods for optimal stopping. %

In the context of optimal stopping, the most popular family of ADP methods is that of simulation-regression. The idea of simulation-regression methods is to simulate a sample of trajectories of the system state and use least squares regression to approximate the optimal continuation value function (i.e., the optimal expected reward from choosing to continue for a given current state) at each step. The paper of \cite{carriere1996valuation} was the first to introduce this type of approach for the valuation of American options, using non-parametric regression; later, \cite{longstaff2001valuing} and \cite{tsitsiklis2001regression} independently considered this approach in the setting where the continuation value function is approximated as a linear combination of basis functions.

Besides simulation-regression, another important stream of ADP methods for optimal stopping is based on the idea of martingale duality. The main idea in this body of work is to relax the non-anticipativity of the policy, but to then penalize the use of future information through a martingale process. In doing so, one obtains an upper bound on the optimal reward, and in some cases one can also obtain policies that perform well. We refer the reader to \cite{rogers2002monte}, \cite{andersen2004primal}, \cite{haugh2004pricing}, \cite{chen2007additive}, \cite{brown2010information}, \cite{desai2012pathwise} for salient examples of this methodology, and to the recent review paper of \cite{brown2022information} for a detailed overview of this technique as it applies to stochastic dynamic programming more broadly. 

Lastly, other recent research has considered approaches distinct from the above two streams. The paper of \cite{ciocan2020interpretable} considers a method for directly obtaining optimal stopping policies from a sample of trajectories in the form of a binary tree. In a different direction, the paper of \cite{sturt2021nonparametric} proposes a method for obtaining threshold policies for low-dimensional optimal stopping problems using robust optimization. 

Our methodology is most closely related to the simulation-regression approach and in particular, the least-squares Monte Carlo (LSM) approach of \cite{longstaff2001valuing}. There are several differences between our methodology and LSM. One difference is that our methodology involves the use of randomized policies, whereas the policy produced by LSM is deterministic. Aside from this, the key philosophical difference between our work and the LSM approach is that while LSM produces a policy in an indirect way -- by approximating the continuation value function using least squares -- our methodology involves formulating an SAA problem and obtaining a policy that \emph{directly} maximizes an estimate of the expected reward obtained with respect to a sample of trajectories. In terms of algorithms, the backward algorithm for heuristically solving our SAA problem that we present in Section~\ref{sec:solution_methodology} is reminiscent of the LSM algorithm, but instead of solving a least squares problem, one solves a non-convex problem where the objective function is given by a weighted sum of logistic response functions. \\

\noindent \textbf{Predict-then-optimize}. Outside of optimal stopping, our paper relates to the literature on combining prediction and optimization. In many analytics problems, the ``predict-then-optimize'' paradigm is often used: one first builds a predictive model by minimizing a loss function that measures predictive performance (for example, squared error), and then utilizes that predictive model in a subsequent optimization problem to obtain a decision. There are many papers that apply this type of approach (see, for example, \citealt{ferreira2016analytics}, \citealt{cohen2017impact}, \citealt{bertsimas2020predictive}). 

However, as pointed out in the recent paper of \cite{elmachtoub2021smart}, this type of predict-then-optimize paradigm can lead to suboptimal decisions, since the predictive model is trained using a loss function that does not account for how the predictive model will be used in the downstream optimization problem. The paper of \cite{elmachtoub2021smart} proposes a Smart Predict-then-Optimize (SPO) framework, where the predictive model is estimated so as to minimize decision/prescriptive loss rather than predictive loss, and numerically shows that the SPO framework can result in significantly better out-of-sample performance. 

Our paper is partially inspired by the observation that the LSM algorithm bears a resemblance to the standard predict-then-optimize paradigm. In the LSM approach,  one first predicts the continuation value based on squared error and then uses that prediction within a greedy policy. However, minimizing squared error does not necessarily translate into good prescriptive performance of the prediction model. Therefore, in order to find a good policy, we consider the problem of directly optimizing in-sample reward over the space of randomized linear policies. \\

\noindent \textbf{Non-convex optimization}. Lastly, our paper is related to the growing literature on non-convex optimization. In the machine learning community, there has been considerable interest in how to solve non-convex optimization problems, since many learning tasks can be naturally expressed as non-convex optimization problems. Since non-convex optimization problems are in general NP-Hard, a popular approach for tackling such problems is based on convex relaxation, where one relaxes the problem in some way to obtain a convex problem that is more tractable. However, as pointed out by \cite{jain2017non}, such convex relaxations generally change the problem drastically, and thus the solution of relaxation can perform poorly for the original problem. Because of this, there has been much recent work on directly solving the non-convex problems via approximate algorithms. Efficient techniques used in non-convex optimization approach include generalized projected gradient descent \citep{candes2015phase}, generalized alternating minimization \citep{netrapalli2015phase}, and stochastic optimization techniques \citep{ge2015escaping}. Although these approaches are not guaranteed to find the global optimum in general, it has been empirically observed that approximately optimal solutions to the true non-convex problem are often better than exactly optimal solutions to a convex relaxation of the problem \citep{jain2017non}.

In our paper, the optimal stopping problem of learning randomized policies from sample data is formulated as a non-convex optimization problem. We follow the spirit of non-convex optimization approaches and propose a backward optimization heuristic to directly work with this non-convex problem, which sequentially optimizes over the weights in each time period. In our implementation of this method, the weights in each time period are approximately optimized using the Adam algorithm \citep{kingma2014adam}, a first-order method that is widely used for non-convex optimization problems, particularly those arising in the training of deep neural networks. Although our heuristic is not guaranteed to find a globally optimal solution, we find numerically that the resulting policies can significantly outperform those obtained by LSM.

\section{Problem Definition}
\label{sec:problem_definition}

In this section, we begin by defining our optimal stopping problem (Section~\ref{subsec:problem_definition_optimal_stopping}). We then define the family of deterministic linear policies, and the problems of optimizing over deterministic linear policies given complete knowledge of the stochastic process (Section~\ref{subsec:problem_definition_deterministic}) and given a sample of trajectories (Section~\ref{subsec:problem_definition_deterministic_SAA}). In Section~\ref{subsec:problem_definition_randomized}, we define the family of randomized linear policies and analogously to the deterministic linear policy case, we define the true stochastic optimization problem for this policy class and its finite sample counterpart. Finally, in Section~\ref{subsec:problem_definition_equivalence}, we state our main equivalence results, which assert that (i) the sample average approximation problems over deterministic and randomized linear policies are equivalent and (ii) the true stochastic optimization problems over deterministic and randomized linear policies are equivalent.

\subsection{Optimal stopping problem}
\label{subsec:problem_definition_optimal_stopping}

We consider a stochastic system that evolves over a discrete time horizon of $T$ periods. Each period is denoted by $t$, and ranges in $[T]$, where we use the notation $[n]$ to denote the set $\{1,\dots, n\}$ for any integer $n$. We use $\xb$ to denote the state of the system, and $\xb(t)$ to denote the state of the system in each period, which belongs to a state space $\Xcal$. At each period, we can choose to stop the system or to continue for one more period. If we choose to stop, we receive a nonnegative reward $g(t,\xb)$ that is a function of the period $t$ and the current state $\xb$. If we continue, we do not receive a reward. The action space of the problem is therefore $\Acal = \{\stop, \go\}$. %

The decision maker has the ability to specify a deterministic policy $\pi: [T] \times \Xcal \to \Acal$, which is a mapping from the current period and state we are in to one of the two actions. The policy $\pi$ defines a stopping time $\tau_{\pi}$, which is a random variable that represents the time in $[T]$ at which the decision maker stops:
\begin{equation}
\tau_{\pi} = \min\{ t \in [T] \mid \pi(t, \xb(t)) = \stop \}.
\end{equation}
We denote the case that the system is never stopped by $\tau_{\pi} = +\infty$, and we assume that the reward is zero in this case, i.e., $g(+\infty,\xb) = 0$ for all $\xb \in \Xcal$. 

Letting $\Pi$ denote the set of all policies, the decision maker's goal is to specify the policy $\pi$ that maximizes the expected discounted reward, which can be written as the following optimization problem:
\begin{equation}
\underset{\pi \in \Pi}{\text{supremum}} \ \Exp[ g(\tau_{\pi}, \xb(\tau_{\pi})) ]. \label{prob:optimal_stopping_abstract}
\end{equation}
We make two important remarks regarding our optimal stopping problem~\eqref{prob:optimal_stopping_abstract}. First, we note that our formulation does not include a discount factor, which is common in the optimal stopping literature. Our motivation for this modeling choice was to simplify the mathematical exposition and to make certain expressions that appear later on less cumbersome. We also note that this is not a restrictive modeling choice, as the reward function $g$ is time dependent, and so one can specify it so as to incorporate discounting. Second, for the entirety of the paper, we shall assume that $g$ is uniformly bounded, which we formalize in the following assumption.

\begin{assumption}
	There exists a finite upper bound $\bar{G}$ such that for any $t \in [T]$, $\xb \in \Xcal$, $0 \leq g(t,\xb) \leq \bar{G}$.
	\label{assumption:bounded_payoff}
\end{assumption}

\subsection{Deterministic linear policies}
\label{subsec:problem_definition_deterministic}

The optimal stopping problem~\eqref{prob:optimal_stopping_abstract} is a challenging problem to solve because the set of policies is unrestricted. Rather than working with the set of all policies, we will consider the set of policies that can be described using a linear combination of basis functions. Specifically, let us define $\phi_1,\dots, \phi_K: \Xcal \to \mathbb{R}$ to be a collection of basis functions, which map a state to a real number; for convenience, we will use $\Phi(\xb) = (\phi_1(\xb), \dots, \phi_K(\xb))$ to denote the vector of basis functions. Let us also define $\bb_t =(b_{t,1}, \dots, b_{t,K}) \in \mathbb{R}^K$ to be a $K$-dimensional vector of weights corresponding to the policy at period $t \in [T]$, and additionally, let us use $\bb$ to denote the collection of $\bb_t$ vectors, i.e., $\bb = (\bb_1,\dots, \bb_T)$. We can then define the policy $\pi_{\bb}$ as the policy that recommends stopping whenever the weighted combination of basis functions, where the weights come from $\bb$, is positive:
\begin{equation}
\pi_{\bb}(t, \xb) = \left\{ \begin{array}{llll} 
\stop &&& \text{if}\ \sum_{k=1}^K b_{t,k} \phi_k(\xb(t)) > 0, \\
\go &&& \text{otherwise}. 
\end{array} \right.
\label{eq:deterministic_policy_definition}
\end{equation}
We let $\Bcal \subseteq \mathbb{R}^{KT}$ be the set of feasible weight vectors, and let $\Pi_{\Bcal}$ be the corresponding set of linear policies:
\begin{equation*}
\Pi_{\Bcal} = \{ \pi_{\bb} \mid \bb \in \Bcal \}.
\end{equation*}
The linear policy optimal stopping problem can then be written as:
\begin{equation}
\underset{\pi \in \Pi_{\Bcal} }{\text{supremum}} \ \Exp[ g(\tau_{\pi}, \xb(\tau_{\pi})) ]. \label{prob:full_deterministic_abstract}
\end{equation}
Note that we can re-write this problem without the use of the stopping time $\tau_{\pi}$, and to make the dependence on $\bb$ more explicit, as follows:
\begin{equation}
\underset{\bb \in \Bcal }{\text{supremum}} \ \Exp \left[  \sum_{t=1}^{T}  g(t, \xb(t)) \cdot \prod_{t'=1}^{t-1} \Ibb\{ \bb_{t'} \bullet \Phi(\xb(t')) \leq 0\} \cdot \Ibb\{ \bb_t \bullet \Phi(\xb(t)) > 0\}  \right], \label{prob:full_deterministic}
\end{equation}
where we use $\Ibb\{\cdot\}$ to denote the indicator function (i.e., $\Ibb\{A\} = 1$ if $A$ is true, and 0 if $A$ is false), and for notational convenience, we use $\bullet$ to denote inner products, i.e., for $\ab, \bb \in \mathbb{R}^n$, $\ab \bullet \bb = \sum_{i=1}^n a_i b_i$. Note that the term $\prod_{t'=1}^{t-1} \Ibb\{ \bb_{t'} \bullet \Phi(\xb(t')) \leq 0\} \cdot \Ibb\{ \bb_t \bullet \Phi(\xb(t)) > 0\} $ is equal to 1 if and only if $\tau_{\pi} = t$; thus, this problem is equivalent to problem~\eqref{prob:full_deterministic_abstract}. We also use $J_D(\bb)$ to denote the objective value of problem~\eqref{prob:full_deterministic} at a fixed weight vector $\bb$.

\subsection{Data-driven optimization over deterministic linear policies}
\label{subsec:problem_definition_deterministic_SAA}

While problem~\eqref{prob:full_deterministic} is a simplification of the general optimal stopping problem~\eqref{prob:optimal_stopping_abstract}, it is still challenging to solve as it requires one to compute expectations over the stochastic process $\{ \xb(t) \}_{t=1}^T$ exactly. More specifically, this problem is challenging because the stochastic process is sufficiently complicated that optimizing over the objective function of problem~\eqref{prob:full_deterministic} is computationally difficult, or because the stochastic process itself is not known exactly. Thus, rather than considering the exact version of the problem, one can consider solving a sample-average approximation (SAA) version of the problem, wherein one has access to a set of trajectories of the stochastic process.

To define this problem, we assume that we have access to a set of $\Omega$ trajectories and that each trajectory is indexed by $\omega$, which ranges from 1 to $\Omega$. Each trajectory $\omega$ corresponds to a sequence of states $\xb(\omega,1), \xb(\omega,2), \dots, \xb(\omega,t)$. Given a policy and a trajectory $\omega$, we define the stopping time for policy $\pi$ in trajectory $\omega$ as 
\begin{equation*}
\tau_{\pi,\omega} = \min \{ t \in [T] \mid \pi(t, \xb(\omega,t)) = \stop \}.
\end{equation*}
Our SAA problem to determine the optimal linear policy is then
\begin{equation}
\underset{\pi \in \Pi_{\Bcal}}{\text{supremum}} \ \frac{1}{\Omega} \sum_{\omega=1}^{\Omega} g(\tau_{\pi,\omega}, \xb(\omega, \tau_{\pi,\omega})). 
 \label{prob:SAA_deterministic_abstract}
\end{equation}
Similarly to problem~\eqref{prob:full_deterministic}, we can re-write problem~\eqref{prob:SAA_deterministic_abstract} as an optimization problem over $\bb$ as follows:
\begin{equation}
\underset{\bb \in \Bcal}{\text{supremum}} \ \frac{1}{\Omega} \sum_{\omega=1}^{\Omega}\sum_{t=1}^{T}  g(t, \xb(\omega, t)) \cdot \prod_{t'=1}^{t-1} \Ibb\{ \bb_{t'} \bullet \Phi(\xb(\omega,t')) \leq 0\} \cdot \Ibb\{ \bb_t \bullet \Phi(\xb(\omega,t)) > 0\}. \label{prob:SAA_deterministic}
\end{equation}
Note that the term $\prod_{t'=1}^{t-1} \Ibb\{ \bb_{t'} \bullet \Phi(\xb(\omega,t')) \leq 0\} \cdot \Ibb\{ \bb_t \bullet \Phi(\xb(\omega,t)) > 0\}$ is equal to 1 if and only if $\tau_{\pi_{\bb},\omega} = t$. Additionally, we use $\hat{J}_D(\bb)$ to denote the objective value of problem~\eqref{prob:SAA_deterministic} at a fixed weight vector $\bb$. 

By re-writing problem~\eqref{prob:SAA_deterministic_abstract} as problem~\eqref{prob:SAA_deterministic}, we can see that the deterministic policy SAA problem~\eqref{prob:SAA_deterministic} can be regarded as a type of discrete optimization problem over the weight vector $\bb$. (Note that the supremum in problem~\eqref{prob:SAA_deterministic} is always attainable and can be replaced by a maximum, since the objective function $\hat{J}_D(\cdot)$ only takes finitely many values.) While this problem can be further re-formulated as a mixed-integer optimization problem, it is unlikely that one would be able to solve such a formulation to provable full or near optimality at a large scale (with tens of thousands or hundreds of thousands of trajectories). Moreover, the gradient of the objective function in problem~\eqref{prob:SAA_deterministic}, when it is defined, is always zero due to the presence of the indicator function. This precludes the use of gradient-based methods, such as stochastic gradient descent, for solving the problem.

\subsection{Randomized linear policies}
\label{subsec:problem_definition_randomized}

Rather than solving problems \eqref{prob:full_deterministic} and \eqref{prob:SAA_deterministic}, which optimize over deterministic linear policies, we can instead consider a problem where we optimize over randomized linear policies. In particular, given a collection of coefficients $\bb = (\bb_1, \dots, \bb_T)$ where $\bb_1, \dots, \bb_T \in \mathbb{R}^K$ we consider randomized linear policies of the form
\begin{equation*}
\tilde{\pi}_{\bb}(t, \xb) = \left\{ \begin{array}{llll} \stop &&& \text{with probability}\ \sigma( \bb_t \bullet \Phi(\xb) ), \\ 
\go &&& \text{with probability}\ 1 - \sigma( \bb_t \bullet \Phi(\xb) ),  \end{array} \right.
\end{equation*}
where $\sigma(u) = e^u / (1+ e^u)$ corresponds to the logistic response function, and where the decision to stop in period $t$ is independent of periods $1, \dots, t-1$. Thus, given the coefficients in $\bb$, the randomized policy $\tilde{\pi}_{\bb}$ randomly chooses to stop with a logistic probability that depends on a weighted sum of basis functions.

The stopping time $\tau_{\tilde{\pi}}$ of a randomized policy $\tilde{\pi}$ is defined as follows. Conditional on a fixed trajectory $\{\xb(t)\}_{t=1}^T$, the stopping time $\tau_{\tilde{\pi}}$ is a random variable, whose probability distribution is given by 
\begin{align*}
\Pbb( \tau_{\tilde{\pi}} = t \mid \xb(1), \dots, \xb(T) ) & = \prod_{t'=1}^{t-1} (1 - \sigma( \bb_{t'} \bullet \Phi(\xb(t')) )) \cdot \sigma ( \bb_t \bullet \Phi(\xb(t)) ), \quad t = 1,\dots, T, \\
\Pbb( \tau_{\tilde{\pi}} = +\infty \mid \xb(1), \dots, \xb(T) ) & = \prod_{t'=1}^{T} (1 - \sigma( \bb_{t'} \bullet \Phi(\xb(t')) )).
\end{align*}
With a slight abuse of notation, let $\Bcal \subseteq \mathbb{R}^{KT}$ denote the set of feasible weight vectors for randomized policies, and define $\tilde{\Pi}_{\Bcal}$ to be the set of feasible randomized policies:
\begin{equation*}
\tilde{\Pi}_{\Bcal} = \{ \tilde{\pi}_{\bb} \mid \bb \in \Bcal \}.
\end{equation*}
Thus, the expected reward of the randomized policy $\tilde{\pi}_{\bb}$, where the expectation is taken over both the stochastic process $\{ \xb(t) \}_{t=1}^T$ and the random stopping decisions can be written as
\begin{equation}
\underset{\tilde{\pi} \in \tilde{\Pi}_{\Bcal} }{\text{supremum}} \ \Exp[  g(\tau_{\tilde{\pi}}, \xb(\tau_{\tilde{\pi}})) ], \label{prob:optimal_stopping_random_policy}
\end{equation}
or equivalently, as 
\begin{equation}
\underset{ \bb \in \Bcal }{\text{supremum}} \ \Exp\left[  \sum_{t=1}^T g(t, \xb(t)) \cdot \prod_{t'=1}^{t-1} (1 - \sigma( \bb_{t'} \bullet \Phi(\xb(t')) )) \cdot \sigma ( \bb_t \bullet \Phi(\xb(t)) ) \right], \label{prob:full_randomized}
\end{equation}
where the expectation in problem~\eqref{prob:full_randomized} is now taken only over the stochastic process  $\{ \xb(t) \}_{t=1}^T$. We shall use $J_R(\bb)$ to denote the objective function of problem~\eqref{prob:full_randomized} at a fixed $\bb \in \Bcal$. 

Similarly to the deterministic problem, we can also consider a sample-average approximation of the true stochastic optimization problem~\eqref{prob:full_randomized}. Given a sample of $\Omega$ trajectories as in Section~\ref{subsec:problem_definition_deterministic_SAA}, we can define the randomized policy SAA problem as
\begin{equation}
\underset{\bb \in \Bcal }{\text{supremum}} \ \frac{1}{\Omega} \sum_{\omega = 1}^{\Omega} \sum_{t=1}^T  g(t, \xb(\omega,t)) \cdot \prod_{t'=1}^{t-1} (1 - \sigma( \bb_{t'} \bullet \Phi(\xb(\omega,t')) )) \cdot \sigma ( \bb_t \bullet \Phi(\xb(\omega,t)) ).
\label{prob:SAA_randomized}
\end{equation}
In other words, we seek to find the coefficients $\bb = (\bb_1, \dots, \bb_T)$ so as to maximize the expected sample-average reward that arises from using these coefficients to effect randomized stopping decisions. We note that in problem~\eqref{prob:SAA_randomized}, the optimization problem is formulated using the supremum. This is necessary, because although the objective function of \eqref{prob:SAA_randomized} is continuous and bounded, the set $\Bcal$ may not be compact, and therefore there may not have an attainable maximum. We shall use $\hat{J}_R(\bb)$ to denote the objective function of the randomized policy at a fixed weight vector $\bb \in \Bcal$.

\subsection{Equivalence of deterministic and randomized policies}
\label{subsec:problem_definition_equivalence}

In this section, we investigate the connection between the deterministic policy problems laid out in Sections~\ref{subsec:problem_definition_deterministic} and \ref{subsec:problem_definition_deterministic_SAA}, and the randomized policy problems in Section~\ref{subsec:problem_definition_randomized}. It turns out that under a small set of conditions, it is possible to show that the optimal objective values of the deterministic policy SAA problem~\eqref{prob:SAA_deterministic} and the randomized policy SAA problem~\eqref{prob:SAA_randomized} are equivalent. With one additional assumption, it is also possible to show that the optimal objective values of the deterministic and randomized policy true problems (problems~\eqref{prob:full_deterministic} and \eqref{prob:full_randomized} respectively) are also equivalent.

Recall that $J_D(\cdot)$, $\hat{J}_D(\cdot)$ $J_R(\cdot)$ and $\hat{J}_R(\cdot)$ are the respective objective functions of the deterministic policy true problem~\eqref{prob:full_deterministic}, the deterministic policy SAA problem~\eqref{prob:SAA_deterministic}, the randomized policy true problem~\eqref{prob:full_randomized} and the randomized policy SAA problem~\eqref{prob:SAA_randomized}. 
For the purposes of the exposition of this section, we will use $\tilde{\bb}$ to denote a vector of weights for the randomized policy problem, while $\bb$ will be used to denote a vector of weights for the deterministic policy problem. We will also further disambiguate the sets of feasible weight vectors for the two problems by using $\Bcal$ to denote the set of feasible weight vectors for the deterministic problem, and $\tilde{\Bcal}$ the set of feasible weight vectors for the randomized problem. 

Before stating our first result, we make two assumptions. Our first assumption is that the set of feasible weight vectors for the deterministic policy and randomized policy SAA problems are the same.
\begin{assumption}
$\Bcal = \tilde{\Bcal} = \mathbb{R}^{KT}$. \label{assumption:Bcal_tildeBcal_RKT}
\end{assumption}
Our second assumption concerns the collection of basis functions.
\begin{assumption}
The first basis function $\phi_1(\cdot)$ is the constant basis function, i.e., $\phi_1(\xb) = 1$ for all $\xb \in \Xcal$. \label{assumption:constant_basis_function}
\end{assumption}
With these two assumptions, we state our first main result. 
\begin{theorem}
Under Assumptions~\ref{assumption:Bcal_tildeBcal_RKT} and \ref{assumption:constant_basis_function} the objective values of problems~\eqref{prob:SAA_deterministic} and \eqref{prob:SAA_randomized} are equal, that is,
\begin{equation*}
\sup_{\bb \in \Bcal} \hat{J}_{D}(\bb) = \sup_{\tilde{\bb} \in \tilde{\Bcal}} \hat{J}_R(\tilde{\bb}).
\end{equation*}
\label{theorem:deterministic_randomized_SAA_equal}
\end{theorem}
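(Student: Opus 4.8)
The plan is to establish the equality by proving the two inequalities $\sup_{\tilde\bb} \hat J_R(\tilde\bb) \le \sup_\bb \hat J_D(\bb)$ and $\sup_\bb \hat J_D(\bb) \le \sup_{\tilde\bb}\hat J_R(\tilde\bb)$ separately. Throughout I would use two structural facts about the logistic function: first, $\sigma(\theta u) \to \Ibb\{u > 0\}$ as $\theta \to +\infty$ for every $u \ne 0$ (and $1 - \sigma(\theta u) = \sigma(-\theta u) \to \Ibb\{u \le 0\}$); and second, $\sigma$ is the CDF of a standard logistic random variable, so $\Pbb(L < u) = \sigma(u)$ and $\Pbb(L \ge u) = 1 - \sigma(u)$ for such an $L$, with ties occurring with probability zero. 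Both directions exploit Assumption~\ref{assumption:constant_basis_function}: since $\phi_1 \equiv 1$, subtracting a scalar $c$ from the first component of a block shifts the score by exactly that scalar, i.e. $(\bb_t - c\eb_1)\bullet\Phi(\xb) = \bb_t\bullet\Phi(\xb) - c$ where $\eb_1 = (1,0,\dots,0)$; and Assumption~\ref{assumption:Bcal_tildeBcal_RKT} guarantees that all the rescaled and perturbed weight vectors I construct remain feasible.

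For the direction $\sup_\bb \hat J_D(\bb) \le \sup_{\tilde\bb}\hat J_R(\tilde\bb)$, I would fix any deterministic $\bb$ and approximate $\hat J_D(\bb)$ by the randomized objectives $\hat J_R(\theta\bb)$ as $\theta\to+\infty$. The only obstruction to the termwise convergence $\sigma(\theta\,\bb_t\bullet\Phi(\xb(\omega,t)))\to\Ibb\{\bb_t\bullet\Phi(\xb(\omega,t))>0\}$ are the boundary trajectory--period pairs whose score $\bb_t\bullet\Phi(\xb(\omega,t))$ equals $0$, at which $\sigma$ stalls at $1/2$. I would remove these by forming $\bb'$, obtained from $\bb$ by subtracting a small $\epsilon>0$ from every constant component $b_{t,1}$, so that $\bb'_t\bullet\Phi(\xb) = \bb_t\bullet\Phi(\xb) - \epsilon$ by Assumption~\ref{assumption:constant_basis_function}. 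Since there are finitely many trajectory--period scores, for $\epsilon$ smaller than the least strictly positive score, every strictly positive score stays positive while every nonpositive score (in particular every zero score) becomes strictly negative; hence $\hat J_D(\bb')=\hat J_D(\bb)$ and no score of $\bb'$ vanishes. Then $\hat J_R(\theta\bb')\to\hat J_D(\bb')=\hat J_D(\bb)$ as $\theta\to+\infty$ by termwise convergence of the finite double sum over $\omega$ and $t$, which yields $\sup_{\tilde\bb}\hat J_R(\tilde\bb)\ge\hat J_D(\bb)$; taking the supremum over $\bb$ finishes this direction.

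The harder and more interesting direction is $\sup_{\tilde\bb}\hat J_R(\tilde\bb)\le\sup_\bb\hat J_D(\bb)$, which I would handle by a derandomization argument that writes every randomized objective as an average of deterministic objectives. Fix $\tilde\bb$ and draw independent standard logistic variables $L_1,\dots,L_T$, one per period, shared across all trajectories. Define the random deterministic weights $\bb^{(L)}_t = \tilde\bb_t - L_t\eb_1$, so that $\bb^{(L)}_t\bullet\Phi(\xb(\omega,t)) = \tilde\bb_t\bullet\Phi(\xb(\omega,t)) - L_t$ and the stop/continue indicators become $\Ibb\{\tilde\bb_t\bullet\Phi > L_t\}$ and $\Ibb\{\tilde\bb_{t'}\bullet\Phi\le L_{t'}\}$. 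For each fixed trajectory $\omega$, taking the expectation over $L$ and using independence across periods factors the product, and each factor evaluates to a logistic probability: $\Pbb(L_{t'}\ge\tilde\bb_{t'}\bullet\Phi)=1-\sigma(\tilde\bb_{t'}\bullet\Phi)$ and $\Pbb(L_t<\tilde\bb_t\bullet\Phi)=\sigma(\tilde\bb_t\bullet\Phi)$. The key point is that, although the $L_t$ are shared across trajectories, each trajectory's contribution sees only the marginals of the $L_t$, so summing over $\omega$ reproduces the randomized objective exactly, giving $\Exp_L[\hat J_D(\bb^{(L)})] = \hat J_R(\tilde\bb)$. Since $\hat J_D(\bb^{(L)})\le\sup_\bb\hat J_D(\bb)$ for every realization of $L$, taking expectations yields $\hat J_R(\tilde\bb)\le\sup_\bb\hat J_D(\bb)$, and a supremum over $\tilde\bb$ closes the argument.

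I expect the main obstacle to be getting this last expectation identity exactly right: one must verify term by term that the shared-threshold coupling reproduces the randomized objective (relying on independence of the $L_t$ across periods within a trajectory and on the absence of logistic atoms), confirm that $\bb^{(L)}$ is always feasible (Assumption~\ref{assumption:Bcal_tildeBcal_RKT}), and confirm that the shift representation $\tilde\bb_t - L_t\eb_1$ is legitimate (Assumption~\ref{assumption:constant_basis_function}). The boundedness of $\hat J_D$ (it takes finitely many values, and is nonnegative by Assumption~\ref{assumption:bounded_payoff}) ensures all the expectations above are finite and that the suprema of $\hat J_D$ are in fact attained.
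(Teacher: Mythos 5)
Your proposal is correct and follows essentially the same two-step argument as the paper: the scaling-plus-constant-perturbation limit $\hat{J}_R(\theta\bb') \to \hat{J}_D(\bb)$ for one inequality, and the logistic-CDF derandomization (shared per-period logistic thresholds absorbed into the constant basis weight, then the probabilistic method) for the other. The only cosmetic difference is that the paper extracts a single realization $\bar{\xi}_1,\dots,\bar{\xi}_T$ achieving at least the expectation and then defines the deterministic weights, whereas you bound $\Exp_L[\hat{J}_D(\bb^{(L)})]$ directly by $\sup_{\bb}\hat{J}_D(\bb)$; these are the same argument.
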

The proof of Theorem~\ref{theorem:deterministic_randomized_SAA_equal} (see Section~\ref{proof_theorem:deterministic_randomized_SAA_equal} of the ecompanion) is based on two key ideas: (1) given a weight vector $\bb$ of a deterministic policy, the same weight vector scaled by an arbitrarily large positive constant $\alpha$ would result in the randomized policy behaving in the same (deterministic) way, since $\sigma(u) \to 1$ as $u \to \infty$ and $\sigma(u) \to 0$ as $u \to -\infty$; and (2) given a weight vector $\tilde{\bb}$ of a randomized policy, one can view $\hat{J}_R(\tilde{\bb})$ as the expectation of a deterministic policy with a particular basis function weight chosen randomly, so applying the probabilistic method implies the existence of a weight vector for a deterministic policy that performs at least as well as the randomized policy. With regard to the assumptions, Assumption~\ref{assumption:Bcal_tildeBcal_RKT} is a technical assumption that is necessary to be able to scale a deterministic weight vector into an appropriate randomized policy, as in idea (1), while Assumption~\ref{assumption:constant_basis_function} is a technical assumption that is necessary to avoid pathological cases where $\bb_t \bullet \Phi(\xb) = 0$ and to be able to appropriately apply the probabilistic method as in idea (2). From a practical perspective, Assumption~\ref{assumption:constant_basis_function} is not too restrictive, as it is common to use a constant basis function in implementations of ADP for optimal stopping. %

Theorem~\ref{theorem:deterministic_randomized_SAA_equal} asserts that the SAA formulations of the two policy optimization problems are essentially equivalent. To establish equivalence of the true deterministic and randomized policy optimization problems \eqref{prob:full_deterministic} and \eqref{prob:full_randomized}, we need the following additional assumption, which concerns the stochastic process itself. We defer our discussion of this assumption until the statement of Theorem~\ref{theorem:deterministic_randomized_full_equal}. To state this assumption, we let $\Phi_{2:K}: \Xcal \to \mathbb{R}^{K-1}$ be defined as $\Phi_{2:K}(\xb) = (\phi_2(\xb), \dots, \phi_K(\xb))$, which is just the vector-valued mapping of the state $\xb$ to the basis function values $\phi_2(\xb)$ through $\phi_K(\xb)$ (in other words, it is just the mapping $\Phi$, only with the first basis function $\phi_1(\cdot)$ omitted). 
\begin{assumption}
For any hyperplane $A \subseteq \mathbb{R}^{K-1}$, i.e., a set of the form $A = \{ \yb \in \mathbb{R}^{K-1} \mid \cb \bullet \yb + d = 0 \}$ for some $\cb \in \mathbb{R}^{K-1}$, $d \in \mathbb{R}$, and any $t \in [T]$, $\Pbb( \Phi_{2:K}(\xb(t)) \in A) = 0$.
\label{assumption:measure_zero}
\end{assumption}
We can now state our counterpart of Theorem~\ref{theorem:deterministic_randomized_SAA_equal} for the true stochastic optimization problems~\eqref{prob:full_randomized} and \eqref{prob:full_deterministic}. 
\begin{theorem}
Under Assumptions~\ref{assumption:Bcal_tildeBcal_RKT}, \ref{assumption:constant_basis_function} and \ref{assumption:measure_zero} the objective values of the randomized problem~\eqref{prob:full_randomized} and the deterministic problem~\eqref{prob:full_deterministic} are equal, that is,
\begin{equation*}
\sup_{\bb \in \Bcal} J_D(\bb) = \sup_{\tilde{\bb} \in \tilde{\Bcal}} J_R(\tilde{\bb}).
\end{equation*} \label{theorem:deterministic_randomized_full_equal}
\end{theorem}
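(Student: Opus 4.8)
The plan is to prove the two inequalities $\sup_{\tilde{\bb}} J_R(\tilde{\bb}) \ge \sup_{\bb} J_D(\bb)$ and $\sup_{\tilde{\bb}} J_R(\tilde{\bb}) \le \sup_{\bb} J_D(\bb)$ separately, mirroring the two ideas behind Theorem~\ref{theorem:deterministic_randomized_SAA_equal}, but now working with expectations over the stochastic process $\{\xb(t)\}_{t=1}^T$ rather than sample averages.

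For the first inequality, I would fix any deterministic weight vector $\bb \in \Bcal = \mathbb{R}^{KT}$ and study $J_R(\alpha \bb)$ as $\alpha \to +\infty$. Because $g$ is bounded by $\bar{G}$ (Assumption~\ref{assumption:bounded_payoff}) and the factors $\prod_{t'<t}(1-\sigma(\cdot))\sigma(\cdot)$ are conditional stopping probabilities that sum over $t$ to at most one, the integrand is dominated by $\bar{G}$, so the dominated convergence theorem lets me pass the limit inside the expectation. Pointwise, $\sigma(\alpha u) \to \Ibb\{u > 0\}$ whenever $u \neq 0$, so on any trajectory for which $\bb_{t'} \bullet \Phi(\xb(t')) \neq 0$ for all $t'$, the randomized integrand converges to the deterministic one. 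The role of Assumption~\ref{assumption:measure_zero} is precisely to guarantee that the exceptional event $\{\bb_{t'} \bullet \Phi(\xb(t')) = 0\}$ has probability zero: using the constant basis function (Assumption~\ref{assumption:constant_basis_function}), this event equals $\{\Phi_{2:K}(\xb(t')) \in A_{t'}\}$ for the hyperplane $A_{t'} = \{\yb : \bb_{t',2:K} \bullet \yb + b_{t',1} = 0\}$, which has measure zero whenever $\bb_{t',2:K} \neq \zerob$. The only remaining case is a fully degenerate block $\bb_{t'} = \zerob$; here I would first replace it by $(-1,0,\dots,0)$, which leaves $J_D(\bb)$ unchanged (both continue at $t'$ on every trajectory) and removes the degeneracy. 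This yields $\lim_{\alpha\to\infty} J_R(\alpha\bb) = J_D(\bb)$, hence $\sup_{\tilde{\bb}} J_R(\tilde{\bb}) \ge J_D(\bb)$ for every $\bb$, and taking the supremum gives the first inequality.

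For the reverse inequality I would apply the probabilistic method through a logistic representation. Introduce independent standard logistic random variables $L_1, \dots, L_T$ (whose CDF is exactly $\sigma$), independent of the process, so that stopping at period $t$ with probability $\sigma(\tilde{\bb}_t \bullet \Phi(\xb(t)))$ is realized by the rule ``stop iff $\tilde{\bb}_t \bullet \Phi(\xb(t)) > L_t$''. Because $\phi_1 \equiv 1$ (Assumption~\ref{assumption:constant_basis_function}), this rule coincides with the deterministic policy $\pi_{\bb(L)}$ whose block $t$ is $\bb_t(L) = (\tilde{b}_{t,1} - L_t, \tilde{b}_{t,2}, \dots, \tilde{b}_{t,K})$, a feasible vector since $\Bcal = \mathbb{R}^{KT}$ (Assumption~\ref{assumption:Bcal_tildeBcal_RKT}); the tie event $\{\tilde{\bb}_t \bullet \Phi(\xb(t)) = L_t\}$ has probability zero by continuity of $L_t$. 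Conditioning on $L$ and using its independence from the process gives $J_R(\tilde{\bb}) = \Exp_L[J_D(\bb(L))] \le \sup_{\bb} J_D(\bb)$, and taking the supremum over $\tilde{\bb}$ yields the reverse inequality.

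I expect the main obstacle to be the first inequality: justifying the interchange of limit and expectation and, in particular, ruling out the boundary events $\{\bb_{t'} \bullet \Phi(\xb(t')) = 0\}$, which is exactly where Assumption~\ref{assumption:measure_zero} enters and where the degenerate zero-blocks must be treated by hand. The reverse inequality is comparatively routine once the logistic representation is set up, relying only on continuity of the logistic distribution and on Assumptions~\ref{assumption:Bcal_tildeBcal_RKT} and~\ref{assumption:constant_basis_function}.
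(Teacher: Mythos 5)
Your proposal is correct and follows essentially the same route as the paper's proof: the first inequality via $\alpha$-scaling with the zero-block replaced by a negative constant-basis direction, Assumption~\ref{assumption:measure_zero} to rule out the boundary events $\{\bb_{t'}\bullet\Phi(\xb(t'))=0\}$, and bounded convergence; the second via the standard-logistic threshold representation, independence, and Fubini. The only (immaterial) difference is that you bound $\Exp_L[J_D(\bb(L))]\leq \sup_{\bb}J_D(\bb)$ directly, where the paper invokes the probabilistic method to extract a fixed realization $\xi'_1,\dots,\xi'_T$ and construct an explicit deterministic weight vector.
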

The proof of Theorem~\ref{theorem:deterministic_randomized_full_equal} (see Section~\ref{proof_theorem:deterministic_randomized_full_equal} of the ecompanion) is similar to the proof of Theorem~\ref{theorem:deterministic_randomized_SAA_equal}, but with several key differences. The most significant difference is that in the proof of Theorem~\ref{theorem:deterministic_randomized_SAA_equal}, we show that a given deterministic linear policy can be approximated arbitrarily closely by a randomized policy. This is facilitated by Assumption~\ref{assumption:constant_basis_function}, which allows one to avoid situations where the inner product of $\bb_t$ and $\Phi(\xb(\omega,t))$ is exactly zero in a given $\omega$ and $t$ (since there are finitely many trajectories, one can perturb a given deterministic weight vector $\bb$ into a new deterministic weight vector $\bb'$ that has the same stopping behavior but never satisfies $\bb_t \bullet \Phi(\xb(\omega,t)) = 0$ for any $\omega$ and any $t$). In the true stochastic optimization problem setting, this is no longer possible. For this reason, we introduce Assumption~\ref{assumption:measure_zero}, which requires that $\Phi_{2:K}(\xb(t))$ has probability zero of being in any given hyperplane. This assumption allows us to avoid the aforementioned pathological cases where the stochastic process is such that, for a given non-zero weight vector $\bb$ for the randomized policy problem, the inner product $\bb_t \bullet \Phi(\xb(t))$ may be exactly zero, which would mean the randomized policy would choose to stop or continue with equal probability. 

With regard to Assumption~\ref{assumption:measure_zero}, we note that this assumption holds for many, though not all, problem instances. For example, suppose that $\Xcal \subseteq \mathbb{R}^n$ and $\phi_2(\xb), \dots, \phi_K(\xb)$ are polynomials of $\xb \in \Xcal$. In this case, the set $\{ \xb \in \Xcal \mid \cb \bullet \Phi_{2:K}(\xb) + d = 0\}$ is the set of zeros of a polynomial function of $\xb$, which is a measure zero set \citep{okamoto1973distinctness}. If we further assume that $\xb(t)$ at each $t$ has a bounded density, which is the case for many commonly used stochastic processes (e.g., geometric Brownian motion), then it immediately follows that $\Pbb( \Phi_{2:K}(\xb(t)) \in A) = 0$ for any hyperplane $A \subseteq \mathbb{R}^{K-1}$. As another example, suppose that $\Xcal = \mathbb{R}^{K-1}$, and define $E$ as $E = \Phi_{2:K}(\Xcal)$, the image of $\Xcal$ under $\Phi_{2:K}(\cdot)$, which we assume to be an open subset of $\mathbb{R}^{K-1}$. Suppose also that the inverse function $\Phi^{-1}_{2:K}(\cdot)$ is defined on $E$ and is continuously differentiable. Then the event $\Phi_{2:K}(\xb(t)) \in A$ for a hyperplane $A \subseteq \mathbb{R}^{K-1}$ is equivalent to the event $\Phi_{2:K}(\xb(t)) \in A \cap E$, which is equivalent to the event $\xb(t) \in \Phi^{-1}_{2:K}(A \cap E)$. If $A$ is a hyperplane in $\mathbb{R}^{K-1}$, it has measure zero, and so does $A \cap E$; and since $\Phi^{-1}_{2:K}(\cdot)$ is continuously differentiable, $\Phi^{-1}_{2:K}(A \cap E)$ is also a measure zero set in $\mathbb{R}^{K-1}$ \citep[see Lemma 18.1 of][]{munkres1991analysis}. If we again assume that each $\xb(t)$ has a bounded density, then it again follows that $\Pbb( \xb(t) \in \Phi^{-1}_{2:K}(A \cap E)) = 0$ or equivalently, $\Pbb( \Phi_{2:K}(\xb(t)) \in A) = 0$. Where Assumption~\ref{assumption:measure_zero} could potentially fail is when the basis function mapping $\Phi_{2:K}(\cdot)$ collapses subsets of $\Xcal$ to singletons, which could cause the probability of $\Phi_{2:K}(\xb(t))$ being in certain hyperplanes to be non-zero.

We conclude this section by offering two remarks on Theorems~\ref{theorem:deterministic_randomized_SAA_equal} and \ref{theorem:deterministic_randomized_full_equal}. First, the significance of these two theorems is that in a certain sense, the problem of optimizing over deterministic policies and the problem of optimizing over randomized policies are the same. In the case of the true stochastic optimization problems, by solving the randomized problem~\eqref{prob:full_randomized}, we can obtain a policy that performs as well as the one we would obtain by solving the deterministic problem~\eqref{prob:full_deterministic}. Similarly, in the case when we are working with a finite sample of trajectories, solving the randomized SAA problem~\eqref{prob:SAA_randomized} allows us to obtain a policy that performs as well as the one we would obtain by solving the deterministic SAA problem~\eqref{prob:SAA_deterministic}. From a practical perspective, the advantage of solving the randomized policy SAA problem~\eqref{prob:SAA_randomized}, as opposed to the deterministic policy SAA problem~\eqref{prob:SAA_deterministic}, is that the objective function $\hat{J}_R(\cdot)$ is a differentiable function. Although $\hat{J}_R(\cdot)$ is non-convex due to the presence of the logistic response function $\sigma(\cdot)$, it is at least possible to approximately optimize $\hat{J}_R(\cdot)$ using gradient-based methods. The specific structure of $\hat{J}_R(\cdot)$ lends itself to an iterative algorithm that optimizes the weight vector $\tilde{\bb}$ one period at a time, starting with the last period, that is reminiscent of the least-squares Monte Carlo (LSM) method; we defer our presentation of this algorithm to Section~\ref{subsec:solution_methodology_algorithm}. 

Second, we comment a little more on the motivation of our randomized policy optimization approach, in light of Theorems~\ref{theorem:deterministic_randomized_SAA_equal} and \ref{theorem:deterministic_randomized_full_equal}. Our interest in randomized linear policies does not stem from some fundamental operational benefit that a randomized policy provides over a deterministic policy; stated differently, we do not wish to argue that in practice, a decision maker would want to make stopping decisions randomly as opposed to deterministically. Instead, our motivation for studying randomized policies is that the use of the logistic response function $\sigma$ allows use to view the randomized policy true problem~\eqref{prob:full_randomized} and the SAA problem~\eqref{prob:SAA_randomized} as differentiable or ``soft'' counterparts to the deterministic policy problems \eqref{prob:full_deterministic} and \eqref{prob:SAA_deterministic}, respectively, which are formulated using the indicator function $\Ibb\{ \cdot \}$ and involve making ``hard'' stopping decisions. Theorems~\ref{theorem:deterministic_randomized_SAA_equal} and \ref{theorem:deterministic_randomized_full_equal} show that in general, this view is justified, as the deterministic and randomized problems are equal in objective value. As we will shortly see, the randomized policy SAA problem is amenable to an analysis of its convergence and generalization properties, and as we have already mentioned, is amenable to an intuitive heuristic for approximately solving it. Later, in Section~\ref{sec:numerics}, we will see numerically that using an approximate solution of the randomized policy SAA problem within a deterministic policy performs very well and can result in significant improvements over existing approaches.

\section{Statistical properties}
\label{sec:statistical_properties}
In this section, we investigate the statistical properties of the randomized policy SAA problem~\eqref{prob:SAA_randomized}. In Section~\ref{subsection:convergence} we show that the objective value and optimal solution set of the randomized policy SAA problem converge almost surely to those of the true randomized policy problem. In Section~\ref{subsection:rademacher}, we establish guarantees on the out-of-sample performance of the solution obtained from the randomized policy SAA problem by characterizing the Rademacher complexity of the expected reward generated by a given set of weight vectors. 

\subsection{Convergence of randomized policy SAA problem}
\label{subsection:convergence}
It is natural to expect that the optimal value and optimal solutions of the SAA problem \eqref{prob:SAA_randomized} converge to their counterparts of the true optimization problem as the number of sample trajectories $\Omega\rightarrow\infty$. In this section, we provide Theorems~\ref{theorem:SAA_uniform_convergence} and \ref{theorem:SAA_solution_convergence} to establish these two convergence properties of our randomized policy SAA problem. 

We first make the following two mild assumptions to facilitate the proofs of Theorems~\ref{theorem:SAA_uniform_convergence} and \ref{theorem:SAA_solution_convergence}. 
\begin{assumption}
	There exists a constant $Q > 0$ such that for any $\xb \in \Xcal$, $\| \Phi(\xb) \|_{\infty}\leq Q$.
	\label{assumption:bounded_basis}
\end{assumption}

\begin{assumption}
$\Bcal$ is a compact subset of $\mathbb{R}^{KT}$. 
	\label{assumption:Bcal_compact}
\end{assumption}

Note that we no longer carry Assumptions~\ref{assumption:Bcal_tildeBcal_RKT}, \ref{assumption:constant_basis_function} and \ref{assumption:measure_zero}. In particular, Assumption~\ref{assumption:Bcal_tildeBcal_RKT} is not relevant to Theorems~\ref{theorem:SAA_uniform_convergence} and \ref{theorem:SAA_solution_convergence}, and Assumptions~\ref{assumption:constant_basis_function} and \ref{assumption:measure_zero} are not required to establish our results here.

With these two assumptions, we can establish the following theorem which shows the almost sure uniform convergence of $\hat{J}_R(\cdot)$ to $J_R(\cdot)$ over the set $\Bcal$. 
\begin{theorem}
\label{theorem:SAA_uniform_convergence}
Suppose that Assumptions~\ref{assumption:bounded_basis} and \ref{assumption:Bcal_compact} both hold. Then with probability one, 
\begin{equation}
\lim_{\Omega \to \infty} \sup_{\bb \in \Bcal} \ | \hat{J}_R(\bb) - J_R(\bb) | = 0.
\end{equation}
\end{theorem}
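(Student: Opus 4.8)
The plan is to prove this uniform convergence via a standard uniform law of large numbers argument, combining pointwise convergence (from the ordinary strong law of large numbers, SLLN) with an equicontinuity estimate and the compactness of $\Bcal$. First I would write the per-trajectory reward as
\[
f(\bb; \xb(1),\dots,\xb(T)) = \sum_{t=1}^T g(t,\xb(t)) \cdot \prod_{t'=1}^{t-1}\bigl(1 - \sigma(\bb_{t'}\bullet \Phi(\xb(t')))\bigr) \cdot \sigma(\bb_t \bullet \Phi(\xb(t))),
\]
so that $\hat{J}_R(\bb) = \frac{1}{\Omega}\sum_{\omega=1}^\Omega f(\bb; \xb(\omega,1),\dots,\xb(\omega,T))$ and $J_R(\bb) = \Exp[f(\bb; \xb(1),\dots,\xb(T))]$, where the $\Omega$ trajectories are i.i.d.\ copies of the stochastic process. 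For each fixed $\bb$, the summands are i.i.d.\ and uniformly bounded: each product-of-probabilities term lies in $[0,1]$, the induced masses over $t = 1,\dots,T,+\infty$ sum to one, and $0 \le g \le \bar{G}$ by Assumption~\ref{assumption:bounded_payoff}, so $0 \le f \le \bar{G}$. Hence the SLLN gives $\hat{J}_R(\bb) \to J_R(\bb)$ almost surely for each fixed $\bb \in \Bcal$.

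The crucial step is to upgrade this pointwise convergence to uniform convergence over $\Bcal$, and for this I would show that $f(\cdot; \xb(1),\dots,\xb(T))$ is Lipschitz in $\bb$ with a constant $L$ that does \emph{not} depend on the trajectory. This is where Assumption~\ref{assumption:bounded_basis} enters: since $\sigma$ has derivative $\sigma(u)(1-\sigma(u)) \le \tfrac14$, each factor $\sigma(\bb_t \bullet \Phi(\xb(t)))$ and $1-\sigma(\bb_{t'}\bullet\Phi(\xb(t')))$ is Lipschitz in the corresponding block $\bb_t$ with constant at most $\tfrac14 Q\sqrt{K}$, uniformly over $\xb \in \Xcal$. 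Because every factor lies in $[0,1]$ and $g \le \bar{G}$, applying the product rule and summing over the $T$ stopping terms yields a gradient bound $\|\nabla_\bb f\| \le L$ with $L = c\,\bar{G}\,T^2 Q\sqrt{K}$ for an absolute constant $c$; the only point that matters is that $L$ is finite and trajectory-independent. Consequently both $\hat{J}_R(\cdot)$ (for every realization) and $J_R(\cdot) = \Exp[f(\cdot;\cdot)]$ are $L$-Lipschitz on $\Bcal$.

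With the uniform Lipschitz bound in hand, I would close the argument by a covering argument exploiting the compactness of $\Bcal$ (Assumption~\ref{assumption:Bcal_compact}). Fix $\epsilon > 0$ and choose a finite $\epsilon$-net $\bb^{(1)},\dots,\bb^{(N)}$ of $\Bcal$. For arbitrary $\bb \in \Bcal$, pick a net point $\bb^{(i)}$ within distance $\epsilon$ and decompose
\[
|\hat{J}_R(\bb) - J_R(\bb)| \le |\hat{J}_R(\bb) - \hat{J}_R(\bb^{(i)})| + |\hat{J}_R(\bb^{(i)}) - J_R(\bb^{(i)})| + |J_R(\bb^{(i)}) - J_R(\bb)|,
\]
where the first and third terms are each at most $L\epsilon$, giving $\sup_{\bb \in \Bcal}|\hat{J}_R(\bb) - J_R(\bb)| \le 2L\epsilon + \max_{1\le i \le N}|\hat{J}_R(\bb^{(i)}) - J_R(\bb^{(i)})|$. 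Since the SLLN forces each of the finitely many net-point discrepancies to zero almost surely, and a finite intersection of probability-one events has probability one, the maximum over the net vanishes almost surely, whence $\limsup_{\Omega\to\infty}\sup_{\bb \in \Bcal}|\hat{J}_R(\bb) - J_R(\bb)| \le 2L\epsilon$ almost surely. Letting $\epsilon \to 0$ along a countable sequence and intersecting the associated probability-one events completes the proof.

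The main obstacle I anticipate is the equicontinuity estimate of the second paragraph, namely bounding the Lipschitz constant of the product-of-logistic-factors structure \emph{uniformly} over all trajectories. The bound is routine once one invokes $\sigma' \le \tfrac14$ together with the boundedness of the basis functions, but one must track how the constant accumulates across the $T$ factors within each term and across the $T$ summands, to confirm it is finite and trajectory-independent. Once that is secured, everything downstream is a standard compactness-plus-SLLN argument.
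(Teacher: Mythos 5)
Your proposal is correct and follows essentially the same route as the paper's proof: a trajectory-independent Lipschitz bound on the per-trajectory reward (hence on both $\hat{J}_R(\cdot)$ and $J_R(\cdot)$), a finite $\epsilon$-net of the compact set $\Bcal$, the strong law of large numbers at the net points combined with the triangle inequality, and a countable intersection of probability-one events to pass from fixed $\epsilon$ to the almost-sure limit. The only differences are cosmetic: you work with the $L_2$ norm and the bound $\sigma' \le \tfrac14$, yielding a constant of order $\bar{G}T^2 Q\sqrt{K}$, whereas the paper uses the $L_1$ norm and obtains $\bar{G}T^2 Q$.
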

The proof of Theorem~\ref{theorem:SAA_uniform_convergence} is provided in Section~\ref{proof_theorem:SAA_uniform_convergence} of the ecompanion. It relies on the fact that the objective function $J_R(\cdot)$ in the true problem \eqref{prob:full_randomized} and the objective function $\hat{J}_R(\cdot)$ in the SAA problem \eqref{prob:SAA_randomized} have bounded Lipschitz constants, and the compactness of $\Bcal$. Thus, we can use these two properties, together with the strong law of large numbers, to show uniform convergence. 

Now, using Theorem~\ref{theorem:SAA_uniform_convergence}, it is straightforward to derive the convergence of the SAA optimal objective value, which is stated in the following corollary. 
\begin{corollary} \label{corollary:SAA_objective_convergence}
Suppose that Assumptions~\ref{assumption:bounded_basis} and \ref{assumption:Bcal_compact} both hold. Then with probability one, 
\begin{equation}
\lim_{\Omega \to \infty} \sup_{\bb \in \Bcal} \hat{J}_R(\bb)	 = \sup_{\bb \in \Bcal} J_R( \bb ).
\end{equation}	

\end{corollary}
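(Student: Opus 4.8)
The plan is to deduce the corollary directly from the uniform convergence established in Theorem~\ref{theorem:SAA_uniform_convergence}, using the elementary fact that the supremum operator is non-expansive with respect to the supremum norm. The key observation is that for any two real-valued functions $f$ and $g$ defined on a common domain $\Bcal$ whose suprema are finite, one has
\begin{equation*}
\left| \sup_{\bb \in \Bcal} f(\bb) - \sup_{\bb \in \Bcal} g(\bb) \right| \leq \sup_{\bb \in \Bcal} | f(\bb) - g(\bb) |.
\end{equation*}
I would apply this with $f = \hat{J}_R$ and $g = J_R$, so that the right-hand side is exactly the quantity that Theorem~\ref{theorem:SAA_uniform_convergence} guarantees converges to zero almost surely.

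First I would verify that the two suprema are finite, so that the manipulations below are well-defined. By Assumption~\ref{assumption:bounded_payoff}, the per-period reward is bounded above by $\bar{G}$; since for any fixed trajectory the stopping-time probabilities $\Pbb(\tau_{\tilde{\pi}} = t)$ over $t \in [T]$ together with $\Pbb(\tau_{\tilde{\pi}} = +\infty)$ sum to one, both $\hat{J}_R(\bb)$ and $J_R(\bb)$ lie in $[0, \bar{G}]$ for every $\bb \in \Bcal$. Consequently $\sup_{\bb \in \Bcal} \hat{J}_R(\bb)$ and $\sup_{\bb \in \Bcal} J_R(\bb)$ are both finite, and the difference of suprema is never of the indeterminate form $\infty - \infty$.

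Next I would establish the non-expansiveness inequality. For any $\bb \in \Bcal$, writing $\hat{J}_R(\bb) = J_R(\bb) + (\hat{J}_R(\bb) - J_R(\bb)) \leq \sup_{\bb'} J_R(\bb') + \sup_{\bb'} | \hat{J}_R(\bb') - J_R(\bb') |$ and then taking the supremum over $\bb$ on the left (the right-hand side being independent of $\bb$) yields $\sup_{\bb} \hat{J}_R(\bb) - \sup_{\bb} J_R(\bb) \leq \sup_{\bb} | \hat{J}_R(\bb) - J_R(\bb) |$. Exchanging the roles of $\hat{J}_R$ and $J_R$ gives the reverse bound, and combining the two produces the absolute-value inequality above.

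Finally, I would combine the pieces: letting $\Omega \to \infty$ in
\begin{equation*}
\left| \sup_{\bb \in \Bcal} \hat{J}_R(\bb) - \sup_{\bb \in \Bcal} J_R(\bb) \right| \leq \sup_{\bb \in \Bcal} | \hat{J}_R(\bb) - J_R(\bb) |,
\end{equation*}
the right-hand side tends to zero with probability one by Theorem~\ref{theorem:SAA_uniform_convergence}, and hence so does the left-hand side on the very same probability-one event, which is precisely the claim. I do not anticipate a genuine obstacle: the entire argument is essentially a one-line consequence of uniform convergence once the non-expansiveness inequality is in hand. The only points requiring a small amount of care are confirming finiteness of the suprema, so that the triangle-inequality rearrangement is legitimate, and ensuring that the conclusion is asserted on the same almost-sure event as in Theorem~\ref{theorem:SAA_uniform_convergence} rather than on a trajectory-by-trajectory basis.
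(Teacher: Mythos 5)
Your proposal is correct and follows essentially the same route as the paper: both deduce the corollary from the almost sure uniform convergence in Theorem~\ref{theorem:SAA_uniform_convergence} by showing that the difference of suprema is controlled by $\sup_{\bb \in \Bcal} | \hat{J}_R(\bb) - J_R(\bb) |$. The only (cosmetic) difference is that you establish this control via the direct non-expansiveness inequality $\left| \sup_{\bb} \hat{J}_R(\bb) - \sup_{\bb} J_R(\bb) \right| \leq \sup_{\bb} | \hat{J}_R(\bb) - J_R(\bb) |$, whereas the paper runs an $\epsilon/2$-argument through a near-maximizer $\tilde{\bb}$; both handle non-attainment of the supremum correctly and land on the same probability-one event.
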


For the convergence of the SAA optimal solutions, let us define the sets $\Bb^*$ and $\hat{\Bb}$ as 
\begin{align*}
\Bb^*  = \arg \max_{\bb \in \Bcal} J_R(\bb), \\
\hat{\Bb} = \arg \max_{\bb \in \Bcal} \hat{J}_R(\bb),
\end{align*}
that is, $\Bb^*$ is the set of optimal solutions of the true stochastic problem~\eqref{prob:full_randomized} while $\hat{\Bb}$ is the set of optimal solutions to the SAA problem~\eqref{prob:SAA_randomized}. In addition, let $\Dbb( \hat{\Bb}, \Bb^*)$ be the \emph{deviation} \citep[see Chapter 7 of][]{shapiro2014lectures} of the set $\hat{\Bb}$ from $\Bb^*$, that is,
\begin{equation*}
\Dbb( \hat{\Bb}, \Bb^* ) = \sup_{\bb \in \hat{\Bb}} \inf_{\bb' \in \Bb^*} \| \bb - \bb' \|_2.
\end{equation*}
In the above definition, the inner infimum measures the distance between a given optimal solution $\bb$ of the SAA problem~\eqref{prob:SAA_randomized} and the closest optimal solution of the true problem~\eqref{prob:full_randomized}; the outer supremum then takes the largest such distance, over all optimal solutions of the SAA problem. 

With these definitions, we can now apply Theorem 5.3 in \cite{shapiro2014lectures} to establish the following theorem:
\begin{theorem}
Suppose that Assumptions~\ref{assumption:bounded_basis} and \ref{assumption:Bcal_compact} both hold. Then with probability one, $\Dbb( \hat{\Bb}, \Bb^*) \to 0$ as $\Omega \to \infty$. 
\label{theorem:SAA_solution_convergence}
\end{theorem}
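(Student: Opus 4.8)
The plan is to obtain the result as a direct application of Theorem 5.3 of \cite{shapiro2014lectures}, which the authors have already flagged. That consistency theorem states, roughly, that for an optimization problem over a fixed feasible set, if the sample objective converges to the true objective uniformly over that set with probability one, the true objective is continuous, and the feasible set is compact, then the deviation of the SAA optimal solution set from the true optimal solution set converges to zero almost surely. My strategy is therefore not to reprove this machinery but to verify its four hypotheses in our setting: (i) compactness of $\Bcal$, (ii) continuity of $J_R(\cdot)$ on $\Bcal$, (iii) almost sure uniform convergence of $\hat{J}_R(\cdot)$ to $J_R(\cdot)$ over $\Bcal$, and (iv) nonemptiness of the solution sets $\Bb^*$ and $\hat{\Bb}$.

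Three of these are immediate. Hypothesis (i) is exactly Assumption~\ref{assumption:Bcal_compact}. Hypothesis (iii) is precisely the conclusion of Theorem~\ref{theorem:SAA_uniform_convergence}, which holds under Assumptions~\ref{assumption:bounded_basis} and \ref{assumption:Bcal_compact}. For hypothesis (ii), I would argue that $J_R(\cdot)$ is in fact Lipschitz continuous on $\Bcal$: the integrand defining $J_R(\bb)$ is a finite sum of products of the smooth logistic factors $\sigma(\bb_{t'} \bullet \Phi(\xb(t')))$ and $1 - \sigma(\bb_{t'} \bullet \Phi(\xb(t')))$ multiplied by the payoff $g(t, \xb(t))$, each logistic factor lies in $[0,1]$, the payoff is bounded by $\bar{G}$ under Assumption~\ref{assumption:bounded_payoff}, and $\sigma$ has a derivative bounded by $1/4$ while the basis functions are bounded by $Q$ under Assumption~\ref{assumption:bounded_basis}. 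Consequently the integrand is uniformly Lipschitz in $\bb$ with a constant independent of the realized trajectory, and taking expectations preserves this Lipschitz bound; this is exactly the Lipschitz property that underlies the proof of Theorem~\ref{theorem:SAA_uniform_convergence}, and Lipschitz continuity of course implies continuity. Hypothesis (iv) then follows from (i) and (ii) together with the analogous continuity of $\hat{J}_R(\cdot)$: by the Weierstrass extreme value theorem, a continuous function attains its supremum on a compact set, so both $\Bb^* = \arg \max_{\bb \in \Bcal} J_R(\bb)$ and $\hat{\Bb} = \arg \max_{\bb \in \Bcal} \hat{J}_R(\bb)$ are nonempty and compact.

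With all four hypotheses verified, I would invoke Theorem 5.3 of \cite{shapiro2014lectures} to conclude that $\Dbb(\hat{\Bb}, \Bb^*) \to 0$ with probability one as $\Omega \to \infty$. I expect the main subtlety to lie not in any single difficult calculation but in correctly matching our formulation to the hypotheses of the cited theorem: in particular, one must take care that the result delivers convergence in the one-sided \emph{deviation} sense $\Dbb(\hat{\Bb}, \Bb^*) = \sup_{\bb \in \hat{\Bb}} \inf_{\bb' \in \Bb^*} \| \bb - \bb' \|_2$ rather than the full Hausdorff distance, since the reverse inclusion---every true optimal solution being approximated by SAA optimal solutions---need not hold and is not claimed. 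One should also confirm that the maximization form of our problem aligns with the minimization convention of the reference, which is handled trivially by negating the objective. Beyond these bookkeeping points, the argument is routine once the Lipschitz continuity of $J_R(\cdot)$ is in hand.
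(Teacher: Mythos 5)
Your proof is correct and follows essentially the same route as the paper's: the paper likewise verifies the hypotheses of Theorem 5.3 of \cite{shapiro2014lectures} -- nonemptiness of $\Bb^*$ and $\hat{\Bb}$, continuity of $J_R(\cdot)$ via its Lipschitz continuity (Lemma~\ref{lemma:J_lipschitz_continuous}), and almost sure uniform convergence from Theorem~\ref{theorem:SAA_uniform_convergence} -- and then invokes that theorem directly. Your additional remarks on the one-sided deviation and the max-versus-min convention are sound bookkeeping points consistent with the paper's treatment.
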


Corollary~\ref{corollary:SAA_objective_convergence} and Theorem~\ref{theorem:SAA_solution_convergence} indicate that, given a sufficiently large sample size, the weight vector obtained by solving the SAA optimization problem \eqref{prob:SAA_randomized} can be arbitrarily close to the optimal weight vector set of the true problem \eqref{prob:full_randomized}, and the corresponding optimal value of the SAA problem can be arbitrarily close to the optimal value of true problem.

\subsection{Rademacher Complexity}
\label{subsection:rademacher}
In Section \ref{subsection:convergence}, we have seen from Theorem~\ref{theorem:SAA_uniform_convergence} that the optimal SAA objective value $\sup_{\bb \in \Bcal} \hat{J}_R(\bb)$ converges with probability one to the true optimal objective value $\sup_{\bb \in \Bcal} J_R(\bb)$ as the number of trajectories goes to infinity. However, in practice, we can only have access to a finite number of sample trajectories; in other words, there always exists some gap between $J_R(\bb)$ and $\hat{J}_R(\bb)$. Therefore, it is important to investigate how far $\hat{J}_R(\bb)$ could be away from $J_R(\bb)$ for a finite sample size and find good bounds on this gap. In this section, we will use a classical data-dependent complexity estimate of a function class, Rademacher complexity, to lower-bound the value of $J_R(\bb)-\hat{J}_R(\bb)$, and provide three upper bounds on the Rademacher complexity term, corresponding to different choices of the weight vector set $\Bcal$. %

To establish this result, we require some additional definitions. We use $Y$ to denote a system realization, which is a pair consisting of the sequence of states and the sequence of rewards, that is, $Y = ( \{ \xb(t) \}_{t=1}^{T}, \{ g(t,\xb(t)) \}_{t=1}^T )$. We use $Y_1,\dots, Y_{\Omega}$ to denote the sample of system realizations. We define the function $\Gamma: \mathbb{R}^T \times [0, \bar{G}]^T \to \mathbb{R}$ as 
\begin{equation}
\Gamma( \ub, \vb) = \sum_{t=1}^T v_t \prod_{t'=1}^{t-1} (1 - \sigma(u_{t'})) \sigma(u_t),
\end{equation}
where $\ub, \vb \in \mathbb{R}^T$. For a fixed weight vector $\bb \in \Bcal$, we define the function $\psi_{\bb}: \Xcal^T \times \mathbb{R}^T \to \mathbb{R}^{2T}$ which maps a system realization $Y$ to a $2T$-dimensional vector as 
\begin{equation}
\psi_{\bb}(Y) = \left[ \begin{array}{c} \bb_1 \bullet \Phi(\xb(1)) \\ \vdots \\ \bb_T \bullet \Phi(\xb(T)) \\ g(1, \xb(1)) \\ \vdots \\ g(T, \xb(T))  \end{array} \right].
\end{equation}
We define $\Fcal = \{ \Gamma \circ \psi_{\bb} \mid \bb \in \Bcal\}$ as the class of realization-to-reward functions. Note that for a fixed weight vector $\bb$, the function value $(\Gamma \circ \psi_{\bb})(Y)$ gives exactly the expected reward of the randomized policy, where the expectation is taken over the stopping/continuation decisions, but conditional on the fixed system realization $Y$. 

Lastly, we define the empirical Rademacher complexity $\hat{R}(\Fcal)$ as 
\begin{equation}
\hat{R}(\Fcal) = \frac{1}{\Omega} \Exp_{\epsilonb} \left[ \sup_{ f \in \Fcal} \sum_{\omega=1}^{\Omega} \epsilon_{\omega} f(Y_{\omega}) \right],
\end{equation}
where $\epsilon_1,\dots,\epsilon_{\Omega}$ are independent Rademacher random variables, that is, each $\epsilon_{\omega}$ is equal to -1 or +1 with probability 1/2, and $\epsilonb$ is used to denote the vector of these random variables. We define the (ordinary) Rademacher complexity $R(\Fcal)$ as $R(\Fcal) = \Exp_{Y_1,\dots,Y_{\Omega}} [ \hat{R}(\Fcal) ]$.

Having set up the definitions of empirical Rademacher complexity and (ordinary/non-empirical) Rademacher complexity, Proposition~\ref{proposition:generalization_bound} establishes the lower bounds of $J_R(\bb)-\hat{J}_R(\bb)$ in terms of these two complexity terms.

\begin{proposition}
	Let $S = \{Y_1,\dots, Y_{\Omega}\}$ be a collection of independent and identically distributed system realizations. For all $\delta>0$, with probability at least $1-\delta$ over the sample $S$:
	\begin{equation}
	J_R(\bb)\geq\hat{J}_R(\bb)-2R(\Fcal)-\bar{G}\sqrt{\frac{\log(1/\delta)}{2\Omega}},\qquad \forall\ \mathbf{b}\in\Bcal
	\label{eq:rademacher_ordinary_generalization_bound}
	\end{equation}
	\begin{equation}
	\label{eq:rademacher_empirical_generalization_bound}
	J_R(\bb)\geq\hat{J}_R(\bb)-2\hat{R}(\Fcal)-3\bar{G}\sqrt{\frac{\log(2/\delta)}{2\Omega}},\qquad \forall\ \mathbf{b}\in\Bcal
	\end{equation}
	\label{proposition:generalization_bound}
\end{proposition}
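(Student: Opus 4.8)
The plan is to recognize Proposition~\ref{proposition:generalization_bound} as a one-sided uniform deviation bound of exactly the kind furnished by the standard Rademacher complexity machinery, specialized to our reward-maximization setting. The key starting observation is that for each fixed $\bb$, the function $f = \Gamma \circ \psi_{\bb} \in \Fcal$ satisfies $J_R(\bb) = \Exp[f(Y)]$ and $\hat{J}_R(\bb) = \frac{1}{\Omega}\sum_{\omega=1}^{\Omega} f(Y_{\omega})$. Hence the two claimed inequalities are equivalent to a uniform upper bound on $\sup_{f \in \Fcal}\bigl( \frac{1}{\Omega}\sum_{\omega} f(Y_{\omega}) - \Exp[f] \bigr)$; this is the ``out-of-sample reward is not much below in-sample reward'' direction, which is symmetric to the usual overfitting direction and so responds to the same tools.

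First I would establish that every $f \in \Fcal$ takes values in $[0,\bar{G}]$, which is what makes all the concentration arguments go through. This is the one genuinely problem-specific step, and it uses the probabilistic structure of $\Gamma$: the coefficients $\prod_{t'=1}^{t-1}(1 - \sigma(u_{t'}))\sigma(u_t)$ are exactly the stopping probabilities $\Pbb(\tau_{\tilde{\pi}} = t \mid \cdot)$, which are nonnegative and sum to at most one, so $\Gamma(\ub,\vb)$ is a nonnegative combination, with weights summing to at most one, of reward values $v_t \in [0,\bar{G}]$ (by Assumption~\ref{assumption:bounded_payoff}). Consequently $\Gamma(\ub,\vb) \in [0,\bar{G}]$, and $\Fcal$ is a uniformly bounded function class.

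Next I would run the textbook two-step argument on $\Delta(S) = \sup_{f \in \Fcal}\bigl( \frac{1}{\Omega}\sum_{\omega} f(Y_{\omega}) - \Exp[f] \bigr)$. Replacing a single realization $Y_{\omega}$ alters $\Delta(S)$ by at most $\bar{G}/\Omega$ because $f \in [0,\bar{G}]$, so McDiarmid's bounded-differences inequality gives $\Delta(S) \leq \Exp[\Delta(S)] + \bar{G}\sqrt{\log(1/\delta)/(2\Omega)}$ with probability at least $1 - \delta$. A standard symmetrization step, introducing a ghost sample and the Rademacher variables $\epsilonb$, bounds $\Exp[\Delta(S)] \leq 2R(\Fcal)$. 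Combining the two yields inequality~\eqref{eq:rademacher_ordinary_generalization_bound}. For the empirical bound~\eqref{eq:rademacher_empirical_generalization_bound}, I would additionally concentrate $\hat{R}(\Fcal)$ around its mean $R(\Fcal) = \Exp[\hat{R}(\Fcal)]$: viewing $\hat{R}(\Fcal)$ as a function of the sample, replacing one $Y_{\omega}$ changes it by at most $\bar{G}/\Omega$, so a second McDiarmid application gives $R(\Fcal) \leq \hat{R}(\Fcal) + \bar{G}\sqrt{\log(2/\delta)/(2\Omega)}$ with probability at least $1 - \delta/2$. Applying~\eqref{eq:rademacher_ordinary_generalization_bound} at confidence level $\delta/2$, substituting this estimate of $R(\Fcal)$, and taking a union bound over the two events produces the $2\hat{R}(\Fcal)$ term and accumulates three copies of $\bar{G}\sqrt{\log(2/\delta)/(2\Omega)}$ into the stated constant $3\bar{G}$.

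The routine machinery here, McDiarmid and symmetrization, is not where the difficulty lies; the conceptual content is confined to the boundedness claim above. I therefore expect the main thing to get right to be the bookkeeping in the empirical version: correctly tracking the bounded-difference constants, splitting the failure probability as $\delta/2$ across the two McDiarmid applications, and verifying that the union bound yields precisely the coefficient $3$ and the $\log(2/\delta)$ arguments as stated, rather than any conceptual hurdle.
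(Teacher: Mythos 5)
Your proposal is correct and follows essentially the same route as the paper's proof: the same deviation quantity $D(S)$, McDiarmid's bounded-differences inequality with constant $\bar{G}/\Omega$, symmetrization to bound $\Exp[D(S)] \leq 2R(\Fcal)$, and a second McDiarmid application plus a union bound at level $\delta/2$ to obtain the empirical version with the coefficient $3$. Your explicit verification that each $f \in \Fcal$ takes values in $[0,\bar{G}]$ (via the sub-probability structure of the stopping weights) is a detail the paper treats as immediate, so it is a welcome addition rather than a divergence.
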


The proof of Proposition~\ref{proposition:generalization_bound} is given in Section~\ref{proof_proposition:generalization_bound} of the ecompanion; it follows the standard proof of generalization error bounds based on Rademacher complexity in statistical learning theory. We remark here that the generalization bounds established in Proposition~\ref{proposition:generalization_bound} are different from those in classical statistical learning. Proposition~\ref{proposition:generalization_bound} provides lower bounds on the true reward $J_R(\bb)$ in the form of the sample-based estimate $\hat{J}_R(\bb)$ minus a penalty term related to the complexity of our model; whereas in classical statistical learning problems, Rademacher complexity is used to upper-bound the true error in the form of the training error plus the complexity term. The reason for this difference is that our problem is to maximize the expected reward, while the goal of classical statistical learning problem is to minimize some loss function. 

The key quantities in Proposition~\ref{proposition:generalization_bound} are the empirical and ordinary Rademacher complexities $R(\Fcal)$ and $\hat{R}(\Fcal)$. To understand how these quantities scale in the problem primitives and the structure of the admissible weight vector set $\Bcal$, we have the following result, which provides deterministic bounds on $\hat{R}(\Fcal)$. (Note that since these bounds on $\hat{R}(\Fcal)$ hold almost surely, they are also valid bounds on $R(\Fcal)$.)

\begin{theorem}
	Suppose that Assumption~\ref{assumption:bounded_basis} holds. Let $B \geq 0$. Then we have the following deterministic bounds for the empirical Rademacher complexity $\hat{R}(\Fcal)$:
	\begin{enumerate}
		\item[a)] If $\Bcal = \{ \bb \in \mathbb{R}^{KT} \mid \| \bb \|_1 \leq B\}$, then $\hat{R}(\Fcal) \leq \sqrt{2}(\bar{G} + 1) \cdot \frac{BQ \sqrt{ 2\log(2KT) } }{\sqrt{\Omega}}$. 
		\item[b)] If $\Bcal = \{ \bb \in \mathbb{R}^{KT} \mid \| \bb \|_2 \leq B\}$, then $\hat{R}(\Fcal) \leq \sqrt{2}(\bar{G} + 1) \cdot \frac{BQ \sqrt{KT} }{ \sqrt{\Omega}}$. 
		\item[c)] If $\Bcal = \{ \bb \in \mathbb{R}^{KT} \mid \| \bb \|_{\infty} \leq B\}$, then $\hat{R}(\Fcal) \leq \sqrt{2}(\bar{G} + 1) \cdot \frac{BQKT}{\sqrt{\Omega}}$.
	\end{enumerate}
	\label{theorem:main_rademacher_bound}
\end{theorem}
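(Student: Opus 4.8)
The plan is to reduce the Rademacher complexity of the composite class $\Fcal=\{\Gamma\circ\psi_{\bb}\mid\bb\in\Bcal\}$ to the Rademacher complexity of the underlying \emph{linear} class $\{\bb_t\bullet\Phi(\xb(t))\}$ via a vector-valued contraction argument, and then to evaluate the linear part by norm duality. The first and most delicate step is to establish that the map $\Gamma$ is Lipschitz with a constant that does not grow with $T$. Writing $p_t=\sigma(u_t)$, $q_t=1-p_t$ and $\rho_t(\ub)=\big(\prod_{t'<t}q_{t'}\big)p_t$, we have $\Gamma(\ub,\vb)=\sum_t v_t\rho_t$ with $\rho_t\ge0$ and $\sum_t\rho_t\le1$ (the $\rho_t$ are stopping-time probabilities). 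Direct differentiation, together with $p_s+q_s=1$ and a telescoping of the tail sums $\sum_{t>s}v_t\rho_t$, yields the clean ``advantage-function'' identity $\partial\Gamma/\partial u_s=\rho_s q_s(v_s-C_s)$, where $C_s\in[0,\bar{G}]$ is the conditional continuation value given the system is still running at time $s$, and $\partial\Gamma/\partial v_s=\rho_s$. Since $|v_s-C_s|\le\bar{G}$ and $\sum_s\rho_s^2\le(\sum_s\rho_s)^2\le1$, this gives $\|\nabla_{\ub}\Gamma\|_2\le\bar{G}$ and $\|\nabla_{\vb}\Gamma\|_2\le1$, so $\Gamma$ is $L$-Lipschitz on $\mathbb{R}^T\times[0,\bar{G}]^T$ in the Euclidean norm with $L\le\sqrt{\bar{G}^2+1}\le\bar{G}+1$.

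Second, I would invoke a vector-contraction inequality for Rademacher complexity (Maurer's inequality), applied with the single $L$-Lipschitz map $\Gamma$ and the $2T$-dimensional feature map $\psi_{\bb}$. This produces the $\sqrt{2}$ factor and bounds $\Omega\hat{R}(\Fcal)$ by $\sqrt{2}\,L\cdot\Exp_{\epsilonb}\sup_{\bb\in\Bcal}\sum_{\omega}\sum_{k=1}^{2T}\epsilon_{\omega k}[\psi_{\bb}(Y_\omega)]_k$, where the $\epsilon_{\omega k}$ are i.i.d.\ Rademacher. The final $T$ coordinates of $\psi_{\bb}$ are the rewards $g(t,\xb(\omega,t))$, which do not depend on $\bb$; their contribution therefore factors out of the supremum and vanishes in expectation. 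Only the first $T$ coordinates survive, leaving $\sqrt{2}\,L\cdot\Exp_{\epsilonb}\sup_{\bb\in\Bcal}\bb\bullet\mathbf{z}$, where $\mathbf{z}=(\mathbf{z}_1,\dots,\mathbf{z}_T)$ and $\mathbf{z}_t=\sum_{\omega}\epsilon_{\omega t}\Phi(\xb(\omega,t))\in\mathbb{R}^K$.

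Third, I would evaluate the linear supremum by duality: over the $\ell_1$-, $\ell_2$-, and $\ell_\infty$-balls of radius $B$, $\sup_{\bb\in\Bcal}\bb\bullet\mathbf{z}$ equals $B\|\mathbf{z}\|_\infty$, $B\|\mathbf{z}\|_2$, and $B\|\mathbf{z}\|_1$ respectively. By Assumption~\ref{assumption:bounded_basis} each coordinate $z_{t,k}=\sum_{\omega}\epsilon_{\omega t}\phi_k(\xb(\omega,t))$ is a Rademacher sum of terms bounded by $Q$, so $\Exp_{\epsilonb}z_{t,k}^2\le\Omega Q^2$. Jensen's inequality then gives $\Exp_{\epsilonb}\|\mathbf{z}\|_2\le Q\sqrt{KT\Omega}$ and $\Exp_{\epsilonb}\|\mathbf{z}\|_1\le KTQ\sqrt{\Omega}$, yielding cases (b) and (c); and the sub-Gaussian maximal inequality applied to the $2KT$ variables $\pm z_{t,k}$ (each with variance proxy at most $\Omega Q^2$) gives $\Exp_{\epsilonb}\|\mathbf{z}\|_\infty\le Q\sqrt{2\Omega\log(2KT)}$, yielding case (a). Dividing by $\Omega$ and substituting $L\le\bar{G}+1$ reproduces all three stated bounds. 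Because the sample $Y_1,\dots,Y_\Omega$ is held fixed throughout and only $\epsilonb$ is random, these bounds are deterministic and therefore also bound $R(\Fcal)$, as claimed in the parenthetical remark.

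The main obstacle is the Lipschitz estimate of the first step. The naive coordinatewise bound $|\partial\Gamma/\partial u_s|\le\bar{G}$ only gives $\|\nabla_{\ub}\Gamma\|_2\le\bar{G}\sqrt{T}$, which would inject a spurious $\sqrt{T}$ factor and break all three bounds. What removes this factor is exposing the stopping-probability structure so that the per-coordinate sensitivities are weighted by $\rho_s$ with $\sum_s\rho_s\le1$; obtaining the advantage-function identity (or at least the sharper bound $|\partial\Gamma/\partial u_s|\le\bar{G}\rho_s$, which hides a cancellation not visible in the raw gradient expression) is the crux of the argument.
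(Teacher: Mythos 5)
Your proposal is correct and follows essentially the same route as the paper's proof: the same gradient identity for $\Gamma$ (your $\rho_s q_s(v_s - C_s)$ is exactly the paper's $p_tA_t$ after folding the factor $1-\sigma(u_s)$ into the bracket), the same use of Maurer's vector contraction inequality with the reward coordinates vanishing, and the same duality-plus-Jensen computations for the $\ell_2$ and $\ell_\infty$ cases. The only cosmetic difference is in case (a), where the paper passes to the extreme points of the $\ell_1$ ball and invokes Massart's finite lemma, while you apply the sub-Gaussian maximal inequality to $\Exp\|\mathbf{z}\|_\infty$ directly; these are the same estimate and yield the identical constant.
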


The proof of Theorem~\ref{theorem:main_rademacher_bound} (see Section~\ref{proof_theorem:main_rademacher_bound} of the ecompanion) consists of two main steps. The first step is relating the Rademacher complexity of $\Fcal$ to the Rademacher complexity of the class $\{ \psi_{\bb} \mid \bb \in \Bcal\}$. This involves the application of Maurer's vector contraction inequality \citep{maurer2016vector}, which is useful when a class of vector-valued functions is composed with a collection of scalar-valued Lipschitz functions, and can be used to relate the Rademacher complexity of the class of composite functions to the Rademacher complexity of the class of vector-valued functions. The outcome of this is that the Rademacher complexity of $\Fcal$ can be written in terms of the Rademacher complexity of $\{ \psi_{\bb} \mid \bb \in \Bcal\}$; in the second step, we analyze the Rademacher complexity of this latter class by exploiting the structure of $\Bcal$. 

From this result, we can see that in all three cases, the Rademacher complexity scales gracefully with the problem dimension. In the worst case (when $\Bcal$ is equal to the $L_{\infty}$ norm ball; part c), it scales at most linearly with $K$ and with $T$. This is partially driven by the fact that the function $\Gamma$ is Lipschitz continuous (with respect to the $L_2$ norm) with constant $\bar{G}+1$. Importantly, this constant does not depend on $T$. This is not obvious, because the probability of stopping at period $t$ is the product of $t$ Lipschitz continuous and bounded functions, and so by standard properties of Lipschitz functions one should expect the Lipschitz constant to depend on $T$. It turns out that one can avoid a dependence on $T$ because the products terms of the form $\prod_{t'=1}^{t-1} (1 - \sigma( \bb_{t'} \bullet \Phi(\xb(t')))) \sigma( \bb_t \bullet \Phi(\xb(t)))$ form a probability distribution. Consequently, the dependence on $T$ in the bounds in Theorem~\ref{theorem:main_rademacher_bound} arises from the structure of the set $\Bcal$, and not from the function $\Gamma$. 

\section{Solution Methodology}
\label{sec:solution_methodology}

We now turn our attention to how one can actually solve the randomized policy SAA problem~\eqref{prob:SAA_randomized}. In Section~\ref{subsec:solution_methodology_complexity}, we show that the randomized policy SAA problem is in general NP-Hard. Motivated by this, in Section~\ref{subsec:solution_methodology_algorithm} we propose an algorithm for approximately solving the SAA problem, based on backward induction. We conclude in Section~\ref{subsec:solution_methodology_comparison_LSM} by comparing our proposed heuristic algorithm with the LSM algorithm.

\subsection{Complexity of randomized policy SAA problem}
\label{subsec:solution_methodology_complexity}

Our main theoretical result on the solvability of the randomized policy SAA problem~\eqref{prob:SAA_randomized} is unfortunately a negative one.
\begin{theorem}
The randomized policy SAA problem \eqref{prob:SAA_randomized} is NP-Hard. \label{theorem:SAA_NPHard}
\end{theorem}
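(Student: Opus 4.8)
The plan is to exhibit a polynomial-time reduction from the decision version of MAX-3SAT: given a 3-CNF formula with variables $x_1,\dots,x_n$ and clauses $C_1,\dots,C_m$ together with a target $k$, decide whether some assignment satisfies at least $k$ clauses. From such an instance I construct an optimal stopping SAA instance \eqref{prob:SAA_randomized} in which the weight vector $\bb$ encodes a truth assignment through the \emph{signs} of its entries, each clause corresponds to exactly one trajectory (so $\Omega = m$), and the objective $\hat{J}_R(\bb)$ equals the normalized expected number of satisfied clauses. The reduction sets $T = n$ (one period per variable) and $K = 1$, with the single basis function $\phi_1$ acting as a \emph{payload} that is read off the state. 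At period $i$ the logit of trajectory $\omega$ is $b_i\,\phi_1(\xb(\omega,i))$, so the single scalar weight $b_i$ — shared across all trajectories that involve variable $x_i$ — plays the role of the truth value of $x_i$; this shared weight is precisely what forces a candidate assignment to be \emph{consistent} across clauses, without needing any constraints on $\Bcal$.

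For the trajectory of clause $C_\omega = \ell_{\omega,1}\vee\ell_{\omega,2}\vee\ell_{\omega,3}$ I set the payload $\phi_1(\xb(\omega,i))$ to $+1$ if $x_i$ occurs positively in $C_\omega$, to $-1$ if it occurs negated, and to $0$ if $x_i\notin C_\omega$; the reward $g(i,\xb(\omega,i))$ is $0$ at the absent periods and is set to $2^{n_{\omega,i}}$ at a period containing a literal, where $n_{\omega,i}$ is the number of absent periods preceding $i$ for clause $\omega$. The key calculation is that, because $\sigma(0)=1/2$, each absent period contributes a survival factor $1-\sigma(0)=1/2$ to the stopping probabilities, and these factors cancel \emph{exactly} against the reward scaling $2^{n_{\omega,i}}$. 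After this cancellation the contribution of clause $C_\omega$ to $\hat{J}_R(\bb)$ telescopes to $1 - \prod_{\ell\in C_\omega}(1-\sigma_\ell)$, where $\sigma_\ell = \sigma(b_v)$ for a positive literal on $x_v$ and $\sigma_\ell = \sigma(-b_v) = 1-\sigma(b_v)$ for a negated literal.

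Writing $z_v = \sigma(b_v)\in(0,1)$, the resulting objective is $\hat{J}_R(\bb) = \frac{1}{m}\sum_{\omega}\bigl[1 - \prod_{\ell\in C_\omega}(1-\sigma_\ell)\bigr]$, which is exactly the expected number of satisfied clauses (normalized by $m$) when each variable $x_v$ is independently set \textbf{true} with probability $z_v$. This expression is \emph{multilinear} in $z = (z_1,\dots,z_n)$ — affine in each coordinate with the others fixed — so its supremum over the box $[0,1]^n$ is attained at a vertex $z\in\{0,1\}^n$, i.e.\ at a genuine truth assignment, at which the expected count equals the true count of satisfied clauses. Hence $\sup_{\bb}\hat{J}_R(\bb) = \mathrm{OPT}/m$, where $\mathrm{OPT}$ is the MAX-3SAT optimum; the supremum is approached (not attained) by driving $|b_v|\to\infty$ with $\mathrm{sign}(b_v)$ matching the optimal assignment, so deciding whether $\sup_{\bb}\hat{J}_R(\bb)\ge k/m$ solves the MAX-3SAT decision problem.

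I expect the main obstacle to be the \emph{reverse} inequality, namely ruling out that some continuous (non-$\{0,1\}$) choice of weights outperforms every integral assignment. The logistic smoothing is exactly what makes this a genuine worry, since $\sigma$ takes values strictly between $0$ and $1$ and could in principle let a ``fractional'' policy exploit the instance; the multilinearity argument is what resolves it cleanly, reducing the continuous optimization back to its integral vertices. The secondary technical point to handle carefully is the absent-period / $\sigma(0)=1/2$ issue, which is neutralized by the exponential reward scaling $2^{n_{\omega,i}}$ — whose values are exponential but whose bit-encodings are polynomial, so the reduction remains polynomial and respects Assumption~\ref{assumption:bounded_payoff} by taking $\bar{G} = 2^{n}$ — together with the non-attainment of the supremum, which only requires the limiting scaling argument rather than an attained maximizer.
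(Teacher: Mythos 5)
Your overall strategy is the same as the paper's: both reduce from the decision form of MAX-3SAT, encode each clause as one trajectory, treat $\sigma(b_v)$ as a fractional truth value for variable $x_v$, and exploit the telescoping identity that a clause's trajectory contributes exactly $1-\prod_{\ell\in C_\omega}(1-\sigma_\ell)$, i.e.\ the probability the clause is satisfied under an independent random assignment. Where you genuinely diverge is in the construction and the rounding step: the paper uses $T=3$ periods (one per literal), $K=N$ basis functions, and must restrict $\Bcal$ to time-invariant weights to force consistency, whereas you use $T=n$ periods (one per variable) and $K=1$, so consistency is automatic and $\Bcal=\mathbb{R}^{T}$ needs no restriction; and for the reverse direction the paper applies the probabilistic method with i.i.d.\ logistic random variables, whereas you observe the objective is multilinear in $z_v=\sigma(b_v)$ and so is maximized at a vertex of $[0,1]^n$ --- essentially the derandomized version of the same argument, and arguably cleaner.

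However, there is a genuine flaw in your construction as stated: your reward is not a well-defined function $g(t,\xb)$ of period and state, which the problem (and the paper's decision-problem formulation) requires. With $K=1$ and the state equal to the payload, the state at a literal period is just $\pm 1$, so two clauses containing the same variable with the same sign produce \emph{identical} $(t,\xb)$ pairs but require \emph{different} rewards $2^{n_{\omega,i}}$ whenever their earlier absent-period counts differ. Concretely, for clauses $x_1\vee x_2\vee x_3$ and $x_2\vee x_4\vee x_5$, period $2$ has state $+1$ in both trajectories but needs reward $2^0=1$ in the first and $2^1=2$ in the second. The fix is routine but must be made: enlarge the state to two coordinates, say $\xb(\omega,i)=(p_{\omega,i},\,r_{\omega,i})$ with the payload in the first coordinate and the intended reward in the second, let $\phi_1$ read only the first coordinate (keeping $K=1$), and define $g(t,\xb)=x_2$. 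With that repair your cancellation computation and the multilinearity argument go through unchanged.

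A second, smaller point: the paper's decision problem asks whether there \emph{exists} $\bb$ with $\hat{J}_R(\bb)\geq\theta$, and in your instance the supremum $\mathrm{OPT}/m$ is never attained. So with threshold $\theta=k/m$, a MAX-3SAT instance with $\mathrm{OPT}=k$ exactly would yield answer ``yes'' for MAX-3SAT but ``no'' for the existence question, breaking the equivalence. You should take $\theta=(k-\tfrac{1}{2})/m$ (any value strictly between $(k-1)/m$ and $k/m$ works), which is precisely the role of the paper's choice $W-\tfrac{1}{2}$; integrality of the clause count then makes both directions go through despite non-attainment.
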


We make a few remarks about this result. First, our proof of Theorem~\ref{theorem:SAA_NPHard} (see Section~\ref{proof_theorem:SAA_NPHard} of the ecompanion) involves considering the decision form of the randomized policy SAA problem~\eqref{prob:SAA_randomized}, which asks whether there exists a weight vector $\bb$ that achieves at least a certain target sample-average reward. By considering this decision problem, we show that for any instance of the decision form of the MAX-3SAT problem, a well-known NP-Complete problem, we can construct a corresponding instance of the randomized policy SAA problem such that the answers to the two decision problems are identical. We note that the proof is not trivial, as the randomized policy SAA problem is in general a continuous problem, whereas MAX-3SAT is inherently discrete. In particular, showing that a positive answer to the SAA decision problem implies a positive answer to the MAX-3SAT problem involves viewing expressions involving $\sigma(\cdot)$ as expected values of expressions defined using a certain collection of i.i.d. random variables, and applying the probabilistic method to guarantee the existence of values for those random variables that can then be used to construct a solution to the MAX-3SAT problem. Most importantly, our proof does not achieve this equivalence by restricting the set of feasible weight vectors $\Bcal$ to be a discrete set: the only restriction we place is to restrict the weight vectors be equal across time (i.e., $b_{t,k} = b_{t',k}$ for $t \neq t'$), which still results in $\Bcal$ being uncountably infinite. %

Second, we note that from an intuition standpoint, it is not reasonable to expect the randomized policy SAA problem~\eqref{prob:SAA_randomized} to be tractable. As alluded to before, this problem is a non-convex optimization problem, due to the presence of the function $\sigma(\cdot)$ that is neither convex nor concave. In addition, as $\sigma(u)$ can be viewed as a continuous approximation of the step function $\Ibb\{ u \geq 0\}$, one can expect the function $\hat{J}_R(\cdot)$ to have many local optima. In the next section, we consider a heuristic approach for solving the problem.

\subsection{Backward optimization algorithm}
\label{subsec:solution_methodology_algorithm}

Motivated by the fact that our randomized policy SAA problem \eqref{prob:SAA_randomized} is theoretically intractable, we develop an iterative heuristic algorithm for solving the problem. 

The high level idea of our heuristic is to solve problem~\eqref{prob:SAA_randomized} by optimizing over the weights one period at a time, starting from the last one. In particular, recall that $\bb = (\bb_1,\dots, \bb_T)$ and with a slight abuse of notation, let $\hat{J}_R(\bb_1,\dots,\bb_T)$ denote the SAA objective value for the given collection of time-specific weight vectors. Assume also that that the set of feasible weight vectors is the Cartesian product of $T$ period-wise weight vector sets, that is, $\Bcal = \Bcal_1 \times \dots \times \Bcal_T$, where $\Bcal_1,\dots,\Bcal_T \subseteq \mathbb{R}^K$. The $t$th iteration of the algorithm involves solving the single-period problem 
\begin{equation}
\max_{\bb'_t \in \Bcal_t} \hat{J}_R(\bb_1,\dots, \bb_{t-1}, \bb'_{t}, \bb_{t+1}, \dots, \bb_T) \label{prob:backward_period_t_abstract}
\end{equation}
and updating the $t$th weight vector in $\bb$, which is $\bb_t$, with the new solution $\bb^*_t$. This process goes on from period $t = T$ all the way to $t = 1$; after the $t = 1$ iteration, the algorithm terminates. We formally define our procedure as Algorithm~\ref{algorithm:backward} below.

\begin{algorithm}
\begin{algorithmic}
\STATE Initialize $\bb_t \gets \zerob$ for all $t \in [T]$. 
\STATE Initialize $c_T(\omega) = 0$ for all $\omega \in [\Omega]$.
\FOR{ $t = T, \dots, 1$}
	\STATE Compute $p_t(\omega)$ as 
		\begin{equation}
		p_t(\omega) = \prod_{t'=1}^{t-1} (1 - \sigma(\bb_{t'} \bullet \Phi(\xb(\omega,t')))).
		\end{equation}
	\STATE Solve the problem
		\begin{equation}
		\max_{\bb_t \in \Bcal_t} \sum_{\omega=1}^{\Omega} \frac{1}{\Omega} \cdot p_t(\omega) \cdot [ g(t, \xb(\omega,t)) \cdot \sigma(\bb_t \bullet \Phi(\xb(\omega,t))) + c_t(\omega) \cdot (1 - \sigma(\bb_t \bullet \Phi(\xb(\omega,t)))) ] \label{prob:backward_period_t_algo}
		\end{equation}
		to obtain an optimal solution $\bb^*_t$. 
	\STATE Compute $c_{t-1}(\omega)$ as 
	\begin{equation}
		c_{t-1}(\omega) = g(t, \xb(\omega,t)) \cdot \sigma(\bb^*_t \bullet \Phi(\xb(\omega,t)))  + c_t(\omega) \cdot (1 - \sigma(\bb^*_t \bullet \Phi(\xb(\omega,t)))).
	\end{equation}
\ENDFOR
\end{algorithmic}
\caption{Backwards optimization algorithm for approximately solving the randomized policy SAA problem~\eqref{prob:SAA_randomized}. \label{algorithm:backward}}
\end{algorithm}

We pause to make several comments about Algorithm~\ref{algorithm:backward}. First, observe that the period $t$ problem solved in Algorithm~\ref{algorithm:backward}, problem~\eqref{prob:backward_period_t_algo}, is of a different form from problem~\eqref{prob:backward_period_t_abstract}. The two problems are equivalent in that problem~\eqref{prob:backward_period_t_algo} is a simplification of problem~\eqref{prob:backward_period_t_abstract}. In particular, $p_t(\omega)$ can be regarded as the probability, conditional on the weight vectors $\bb_1,\dots, \bb_{t-1}$, of not having stopped by period $t$ in trajectory $\omega$. By using this term, we can simplify the problem and remove the appearance of the weight vectors for periods prior to $t$. Similarly, $c_t(\omega)$ can be regarded as the expected continuation value at period $t$ in trajectory $\omega$, i.e., given that we have not stopped by period $t$, what is the expected reward (where the expectation is with respect to the randomness of the stopping decisions) from not stopping at period $t$, for the trajectory $\omega$. Using both of these, and using the fact that $\hat{J}_R(\cdot)$ includes terms that only depend on $\bb_{t'}$ for $t' < t$, we can boil problem~\eqref{prob:backward_period_t_abstract} down to problem~\eqref{prob:backward_period_t_algo}, which is of the form $\sum_{\omega} (c_{\omega} + d_{\omega} \cdot \sigma( \bb_t \bullet \Phi(\xb(\omega,t))))$.

Second, we note that problem~\eqref{prob:backward_period_t_algo} is still a challenging problem to solve, as the objective function is still non-convex. It is an instance of the sum-of-sigmoids problem (a sigmoid function being an S-shaped function, such as the logistic response function $\sigma(\cdot)$), which \cite{udell2013maximizing} show to be NP-Hard in general. Similarly, \cite{akcakus2021exact} show that a related problem, of finding a binary product attribute vector that maximizes the expected market share under a mixture-of-logits model, is NP-Hard. However, problem~\eqref{prob:backward_period_t_algo} is more manageable to solve than the complete randomized policy SAA problem~\eqref{prob:SAA_randomized}, as it involves only the weight variables for a single period ($K$ variables) as opposed to all $T$ periods ($KT$ variables). In our implementation of Algorithm~\ref{algorithm:backward}, we use the Adam algorithm \citep{kingma2014adam} to approximately solve problem~\eqref{prob:backward_period_t_algo}. %

Lastly, we comment on how we use the solution $\bb^* = (\bb^*_1, \dots, \bb^*_T)$ produced by Algorithm~\ref{algorithm:backward}. Although $\bb^*$ corresponds to a randomized policy, in our numerical experiments we will focus on using $\bb^*$ within a deterministic linear policy. In other words, we plug $\bb^*$ into a policy of the form of equation~\eqref{eq:deterministic_policy_definition}. The reason for doing this is that in general, we have empirically observed that the deterministic policy defined with $\bb^*$ performs better than the randomized policy defined with $\bb^*$. To understand the intuition for this, let us consider problem~\eqref{prob:backward_period_t_algo}. For this problem, a good weight vector $\bb_t$ at time $t$ would be one where, for most trajectories, $\bb_t \bullet \Phi(\xb(\omega,t))$ is very positive when $g(t, \xb(\omega,t))$ is higher than $c_t(\omega)$, and where $\bb_t \bullet \Phi(\xb(\omega,t))$ is very negative when $c_t(\omega)$ is higher than $g(t, \xb(\omega,t))$. When this is true for most trajectories, it is reasonable to expect that we could improve our objective value by thresholding $\bb_t \bullet \Phi(\xb(\omega,t))$, i.e., rounding $\sigma( \bb_t \bullet \Phi(\xb(\omega,t)))$ to 0 or 1, which would have the effect of making the expression in the square brackets in problem~\eqref{prob:backward_period_t_algo} generally (i.e., for most trajectories) equal to $\max \{ g(t,\xb(\omega,t)), c_t(\omega)\}$, which is a higher quantity than $g(t, \xb(\omega,t)) \cdot \sigma(\bb_t \bullet \Phi(\xb(\omega,t))) + c_t(\omega) \cdot (1 - \sigma(\bb_t \bullet \Phi(\xb(\omega,t))))$. 

Besides this consideration, as discussed in Section~\ref{subsec:problem_definition_equivalence}, optimizing over randomized policies is equivalent to optimizing over deterministic policies, and our motivation for optimizing over randomized policies is to ultimately obtain good deterministic policies in a tractable manner. Lastly, we note that using $\bb^*$ within a deterministic policy is similar to how in binary classification problems in machine learning, it is common to learn a probabilistic model whose natural output is a probability of a target class (for example, a logistic regression model), and to then threshold this probability to obtain a hard classification.

\subsection{Comparison of Algorithm~\ref{algorithm:backward} with least-squares Monte Carlo}
\label{subsec:solution_methodology_comparison_LSM}

Algorithm~\ref{algorithm:backward} shares some similarities with the least-squares Monte Carlo (LSM) algorithm of \cite{longstaff2001valuing}. For easier comparison, we state the basic LSM algorithm adapted to our problem setting as Algorithm~\ref{algorithm:LSM} below. 

\begin{algorithm}
\begin{algorithmic}
\STATE Initialize $c_{T-1}(\omega) = g(T,\xb(\omega,T))$ for all $\omega \in [\Omega]$.
\FOR{ $t = T-1, \dots, 1$}
	\STATE Solve the least-squares problem 
	\begin{equation}
	\min_{\bb_t \in \mathbb{R}^K} \frac{1}{2} \sum_{\omega = 1}^{\Omega} ( c_t(\omega) - \bb_t \bullet \Phi(\xb(\omega,t)))^2 
	\end{equation}
	to obtain an optimal solution $\bb^*_t$. 
	\STATE Compute $c_{t-1}(\omega)$ as 
	\begin{equation}
		c_{t-1}(\omega) = \left\{ \begin{array}{ll} 
				c_t(\omega) & \text{if}\ \bb^*_t \bullet \Phi(\xb(\omega,t)) \geq g(t,\xb(\omega,t)), \\
				g(t,\xb(\omega,t)) & \text{if}\ \bb^*_t \bullet \Phi(\xb(\omega,t)) < g(t,\xb(\omega,t)).
				\end{array} \right.
	\end{equation}
\ENDFOR
\end{algorithmic}
\caption{Least-squares Monte Carlo (LSM) algorithm of \cite{longstaff2001valuing}. \label{algorithm:LSM}}
\end{algorithm}

In particular, the LSM algorithm also involves iterating backwards in time, and also involves updating the continuation value using the current policy. However, a key difference is that LSM involves solving a least-squares problem to obtain basis function weights $\bb_t$, so as to predict the continuation value using those basis function weights. The stopping policy is then defined by comparing the current payoff to the predicted continuation value, where stopping is prescribed if and only if the current payoff is more than the predicted continuation value. In contrast, our algorithm involves directly optimizing over the stopping policy at a given period: in problem~\eqref{prob:backward_period_t_algo}, we look for weights $\bb_t$ for the stopping decision in the current period so that the expected reward, which accounts for both the current period's reward and the continuation value $c_t(\omega)$ that captures reward in future periods, is optimized. 

Besides this difference, it is also important to appreciate the higher level differences in the two approaches. In particular, LSM (Algorithm~\ref{algorithm:LSM}) produces a policy of the form 
\begin{equation*}
\pi(t, \xb) = \left\{ \begin{array}{ll} \stop & \text{if}\ g(t,\xb) > \bb_t \bullet \Phi(\xb(t)), \\ \go & \text{if}\ g(t,\xb) \leq \bb_t \bullet \Phi(\xb(t)). \end{array} \right.
\end{equation*}
Note that this policy can be made equivalent to a deterministic linear policy as we have defined it in Sections~\ref{subsec:problem_definition_deterministic} and \ref{subsec:problem_definition_deterministic_SAA}. Specifically, we can augment the state variable $\xb(t)$ to include an additional coordinate that is equal to $g(t,\xb(t))$ and then augment the basis function architecture $\Phi(\xb) = (\phi_1(\xb), \dots, \phi_K(\xb)))$ with a $K+1$th basis function $\phi_{K+1}(\cdot)$ that is exactly equal to this new coordinate. With these augmentations, the weight vector  $\tilde{\bb}_t = (-b_{t,1}, \dots, -b_{t,K}, +1)$ is such that 
\begin{equation*}
g(t,\xb) > \sum_{k=1}^K b_{t,k} \phi_k(\xb(t)) \ \text{if and only if}\ \sum_{k=1}^{K+1} \tilde{b}_{t,k} \phi_k(\xb(t)) > 0,
\end{equation*}
i.e., the corresponding deterministic linear policy with the $K+1$ basis functions and the weight vectors $\tilde{\bb}_1,\dots, \tilde{\bb}_{T}$ behaves identically to the LSM policy. Thus, LSM can be viewed as a method for returning a solution to the deterministic policy SAA problem~\eqref{prob:SAA_deterministic}. 

In light of this relationship, we note that, to our knowledge, there is no guarantee that the solution that LSM returns solves either the true deterministic policy problem~\eqref{prob:full_deterministic} or the deterministic policy SAA problem~\eqref{prob:SAA_deterministic}. In contrast, Algorithm~\ref{algorithm:backward} is designed to directly (albeit approximately) solve the randomized policy SAA problem~\eqref{prob:SAA_randomized}. Our results in Sections~\ref{sec:problem_definition} and \ref{sec:statistical_properties} provide theoretical justification for why this approach is desirable: under mild conditions, the true randomized policy problem~\eqref{prob:full_randomized} and its SAA counterpart \eqref{prob:SAA_randomized} are equivalent to the true deterministic policy problem~\eqref{prob:full_deterministic} and its SAA counterpart, respectively (guaranteed by our equivalence results, Theorems~\ref{theorem:deterministic_randomized_SAA_equal} and \ref{theorem:deterministic_randomized_full_equal}); as we accumulate more data, the optimal objective value and solution of the randomized policy SAA problem~\eqref{prob:SAA_randomized} converge to that of the true randomized policy problem \eqref{prob:full_randomized} (guaranteed by our consistency results, Corollary~\ref{corollary:SAA_objective_convergence} and Theorem~\ref{theorem:SAA_solution_convergence}); and optimizing the randomized policy SAA problem directly optimizes a lower bound on the out-of-sample reward that becomes tighter as one accumulates more data (guaranteed by our generalization bound and Rademacher complexity results, Proposition~\ref{proposition:generalization_bound} and Theorem~\ref{theorem:main_rademacher_bound}). Taken together, these results suggest that for a fixed basis function architecture, our method (Algorithm~\ref{algorithm:backward}) has the potential to obtain policies that deliver better out-of-sample performance than LSM. In Section~\ref{sec:numerics}, we will showcase one family of benchmark problem instances where this is indeed the case.

\section{Application to option pricing}
\label{sec:numerics}
In this section, we apply our randomized policy approach to a standard option pricing problem, previously considered in a number of papers \citep[e.g.,][]{desai2012pathwise,ciocan2020interpretable}. We define our option pricing problem in Section \ref{subsec:numerics_background}. In Section~\ref{subsec:numerics_neq1}, we illustrate the difference between our randomized policy approach and prior approaches for obtaining deterministic linear policies using a simple option pricing problem involving a single asset. Then, in Section~\ref{subsec:numerics_neq8}, we test our approach and compare it to prior approaches in a higher dimensional setting with eight assets.

We implement our methods in the Julia programming language, version 0.6.4 \citep{bezanson2017julia}. For the pathwise optimization method, we implement the pathwise linear program using the JuMP package \citep{lubin2015computing,dunning2017jump} and solve it using Gurobi, version 9.5 \citep{gurobi}. All our experiments are executed on Amazon Elastic Compute Cloud (EC2), on a single instance of type \texttt{r4.8xlarge} (Intel Xeon E5-2686 v4 processor with 32 virtual CPUs and 244 GBs of memory). 

\subsection{Background}
\label{subsec:numerics_background}
The optimal stopping problem that we will focus on is pricing a Bermudan max-call option with a knock-out barrier, which was previously studied in \cite{desai2012pathwise} and later in \cite{ciocan2020interpretable}. We consider the same family of problem instances used in those papers, and briefly review the details here. 

In this family of problem instance, the option is dependent on $n$ assets. The option is exercisable over a period of 3 calendar years with $T=54$ equally spaced exercise times. The price of each underlying asset follows a geometric Brownian motion, with the drift set equal to the annualized risk-free rate $r$ and the annualized volatility set to $\sigma$, and each asset is assumed to start at an initial price of $\bar{p}$. In all of the experiments that we will present, we shall assume $r = 5\%$ and $\sigma = 20\%$, as in \cite{desai2012pathwise}, and we will also assume the pairwise correlation between the assets to be zero. We use $p_i(t)$ denote the price of asset $i$ at exercise time $t$. %

The option has a strike price $K$ and a knock-out barrier price $B$. The payoff of the option at any given time is determined by the strike price $K$, the knock-out barrier value $B$ and the maximum price among the $n$ underlying assets. If at time $t$ the maximum price of the $n$ underlying assets exceeds the barrier price $B$, the option is ``knocked out'' and the payoff becomes zero for all times $\tilde{t}\geq t$. We let $y(t)$ be an indicator variable that is 1 if the option has not been knocked out by time $t$ and zero otherwise:
\begin{equation}
y(t)=\Ibb\left\{\max_{1\leq i\leq n,1\leq t^'\leq t}p_i(t^')< B\right\}
\end{equation}
We let $g'(t)$ denote the (undiscounted) payoff from exercising the option at time $t$, which is defined as follows:
\begin{equation}
g'(t)=y(t)\cdot\max\left\{0, \max_{1\leq i\leq n}p_i(t) - K\right\}. 
\end{equation}
All payoffs are assumed to be discounted continuously according to the risk-free rate. This implies a discrete discount factor $\beta = \exp( -r \times 3/54) = 0.99723$. We can thus define the discounted reward $g(t)$ to be $g(t) = \beta^t \cdot g'(t)$, which can be thought of as the payoff denominated in dollars corresponding to time $t = 0$.

We compare three different methods: our randomized policy optimization (RPO) approach, the least-squares Monte Carlo (LSM) method of \cite{longstaff2001valuing} and the pathwise optimization (PO) method of \cite{desai2012pathwise}. We test of each of these methods with a variety of basis functions. In our presentation of our results, we will denote the different sets of basis functions as follows:
\begin{itemize}
	\item \textsc{one}: the constant basis function, equal to 1 for every state.
	\item \textsc{prices}: the price $p_i(t)$ of asset $i$ for $i \in [n]$.
	\item \textsc{payoff}: the undiscounted payoff $g'(t)$. 
	\item \textsc{KOind}: the knock-out (KO) indicator variable $y(t)$. 
	\item \textsc{pricesKO}: the KO adjusted prices $p_i(t) \cdot y(t)$ for $i \in [n]$. 
	\item \textsc{maxpriceKO} and \textsc{max2priceKO}: the largest and second largest KO adjusted prices. 
	\item \textsc{prices2KO}: the KO adjusted second-order price terms, $p_i(t) \cdot p_j(t) \cdot y(t)$ for $1 \leq i \leq j \leq n$. 
\end{itemize}

In our implementation of the RPO approach, we use the backward algorithm, Algorithm~\ref{algorithm:backward}. We use the coefficients obtained directly within a deterministic policy. We solve problem~\eqref{prob:backward_period_t_algo} using a custom implementation of Adam, a momentum-based first-order method \citep{kingma2014adam,goodfellow2016deep}. We follow the parameter defaults in \cite{kingma2014adam}, with the exception of the step size, for which we use $10^{-1}$, as opposed to $10^{-3}$. Additionally, we do not apply any minibatching, and compute the full gradient for the entire sample of $\Omega$ trajectories. For each solve of problem~\eqref{prob:backward_period_t_algo}, we warm start the Adam algorithm using the coefficients obtained by LSM; we describe our warm starting scheme in more detail in Section~\ref{subsec:additional_numerics_warmstart} of the ecompanion. %

In our implementation of the pathwise optimization method, we follow \cite{desai2012pathwise} in generating 500 inner samples. 

\subsection{Experiment \#1: An illustrative example with $n = 1$} 
\label{subsec:numerics_neq1}

In our first experiment, to demonstrate the difference between our approach and incumbent approaches, we consider an instance of the option with $n = 1$ asset; thus, the undiscounted payoff and knock-out indicators can be written simply as
\begin{equation}
g'(t)=y(t)\cdot\max\left\{0, p_1(t) - K\right\},
\end{equation}
\begin{equation}
y(t) = \Ibb\left\{\max_{1\leq t' \leq t}p_1(t^')< B\right\}.
\end{equation}
We set $K = 100$ and $B = 150$, and vary $\bar{p}$ in the set $\{90, 100, 110\}$. For each initial price $\bar{p}$, we perform 10 replications, where in each replication we generate a set of $\Omega = 100,000$ trajectories to train each policy, and 100,000 trajectories for out-of-sample testing. 

We test LSM with two basis function architectures: (i) \textsc{one}, and (ii) \textsc{one} and \textsc{payoff}. Note that both of these basis function architectures imply an exercise policy that involves simply comparing the undiscounted payoff $g'(t)$ to a constant, state-independent threshold. In particular, for (i), the exercise policy prescribes $\stop$ if and only if
\begin{align*}
g(t) & \geq b_{\textsc{one}} \cdot 1 \\
& = b_{\textsc{one}},
\end{align*}
which is equivalent to 
\begin{equation*}
g'(t) \geq  \beta^{-t} b_{\textsc{one}}. 
\end{equation*}
For (ii), the exercise policy prescribes $\stop$ if and only if 
\begin{align*}
g(t) & \geq b_{\textsc{one}} \cdot 1 + b_{\textsc{payoff}} \cdot g'(t).
\end{align*}
Using the fact that $g(t) = \beta^{t} g'(t)$, we can re-arrange the above inequality into the following threshold rule in terms of the undiscounted payoff:
\begin{align*}
g'(t) \geq \frac{b_{\textsc{one}}}{\beta^{t} - b_{\textsc{payoff}}},
\end{align*}
which holds if $\beta^{t} - b_{\textsc{payoff}} > 0$.

For the pathwise optimization method, we test it with the same two basis function architectures as LSM. Since the pathwise optimization-based policy is also a greedy policy based on an approximate continuation value function, one can again represent the policies obtained with the architectures (i) and (ii) as constant threshold policies. In addition to the policies, we also use the pathwise optimization solution to compute an upper bound on the optimal reward using an independent set of 100,000 trajectories (see \citealt{desai2012pathwise}).

For the randomized policy approach, we test it with a single basis function architecture, consisting of \textsc{one} and \textsc{payoff}. This results in an exercise policy where $\stop$ is recommended if and only if 
\begin{equation*}
b_{\textsc{one}} \times 1 + b_{\textsc{payoff}} \times g'(t) > 0,
\end{equation*}
which is equivalent to the threshold rule
\begin{equation*}
g'(t) > - \frac{b_{\textsc{one}}}{b_{\textsc{payoff}}}
\end{equation*}
if $b_{\textsc{payoff}} > 0$.

Table~\ref{table:DFM_neq1_performance} shows the out-of-sample performance of the different methods under the different basis function architectures, as well as the pathwise optimization upper bounds. For each combination of a policy (a combination of one of the three methods -- LSM, PO and RPO -- and a basis function architecture) and an initial price $\bar{p}$, we report the average out-of-sample reward over the ten replications. We additionally report the standard error over those ten replications in parentheses. 

From this table, we can see that even though the three methods -- LSM, pathwise optimization and the randomized policy approach -- produce policies within the same policy class, there are significant differences in performance. In particular, the policy produced by the randomized policy approach significantly outperforms LSM and pathwise optimization. Comparing to LSM with \textsc{one}, the randomized policy approach with \textsc{one} and \textsc{payoff} attains an expected discounted reward that is as much as 89\% higher. Comparing to LSM with \textsc{one} and \textsc{payoff}, which in general performs better than LSM with \textsc{one}, the improvement by the randomized policy approach is as much as 7.7\%. Comparing to PO with \textsc{one} and with \textsc{one} and \textsc{payoff}, the randomized policy approach attains an improvement of up to 29\% and 35\%, respectively. In addition, the PO upper bounds are close to the performance of the randomized policy approach (for all three initial prices, the RPO lower bound is within 2.3\% of the tightest PO upper bound). This suggests that for this problem setting, the policy is nearly optimal. This experiment highlights the fact that even for a simple problem instance involving only a single asset and the simplest possible policy class, the LSM method can return a policy that is substantially suboptimal.

\begin{table}
\centering
\begin{tabular}{llccc} \toprule
& & \multicolumn{3}{c}{Initial price} \\
Method & Basis functions & $\bar{p} = 90$ & $\bar{p} = 100$ & $\bar{p} = 110$ \\ \midrule
  LSM & \textsc{one} & 6.47\enskip (0.010) & 10.82\enskip (0.011) & 16.47\enskip (0.008) \\ 
  LSM & \textsc{one, payoff} & 11.37\enskip (0.020) & 16.64\enskip (0.024) & 22.01\enskip (0.018) \\[0.25em]
  PO & \textsc{one} & 9.47\enskip (0.017) & 14.79\enskip (0.017) & 20.67\enskip (0.014) \\
  PO & \textsc{one, payoff} & 9.07\enskip (0.032) & 16.01\enskip (0.029) & 22.73\enskip (0.023) \\[0.25em] 
  RPO & \textsc{one, payoff} & \bfseries 12.25\enskip (0.018) & \bfseries 17.51\enskip (0.023) & \bfseries 23.04\enskip (0.018) \\[0.25em] 
  PO-UB & \textsc{one} & 18.26\enskip (0.018) & 25.47\enskip (0.012) & 32.49\enskip (0.012) \\ 
  PO-UB & \textsc{one, payoff} & 12.54\enskip (0.009) & 17.88\enskip (0.009) & 23.55\enskip (0.005) \\ \bottomrule
\end{tabular}
\caption{Out-of-sample performance of different policies in $n = 1$ experiment. \label{table:DFM_neq1_performance}}
\end{table}

It is also interesting to consider what the thresholds produced by the different methods look like. Figure~\ref{figure:DFM_neq1_threshold_plot} plots the thresholds for the five different policies at each period in the time horizon, for a single replication with $\bar{p} = 110$. We can see that there are substantial differences in the policies. The thresholds for the LSM policies are generally lower than those of the RPO policy, which implies that the LSM policies in general stop earlier in the time horizon, when the reward will generally be lower. The PO policy with \textsc{one} also results in thresholds that are lower than the RPO policy. On the other hand, the PO policy with \textsc{one} and \textsc{payoff} results in thresholds that are higher than those from RPO for roughly the first 40 periods; as a result, the PO policy may miss opportunities to stop earlier in the horizon. Interestingly, the thresholds for the LSM and PO policies begin rapidly decaying earlier in the time horizon than RPO (for LSM with \textsc{one}, LSM with \textsc{one} and \textsc{payoff}, and PO with \textsc{one}, this starts right around the beginning of the horizon; for PO with \textsc{one} and \textsc{payoff}, this starts at around $t = 34$). For RPO, there is a slow and steady decrease in the threshold until about $t = 48$, where the threshold begins to decrease much more quickly.

\begin{figure}
\begin{center}
\includegraphics[width=\textwidth]{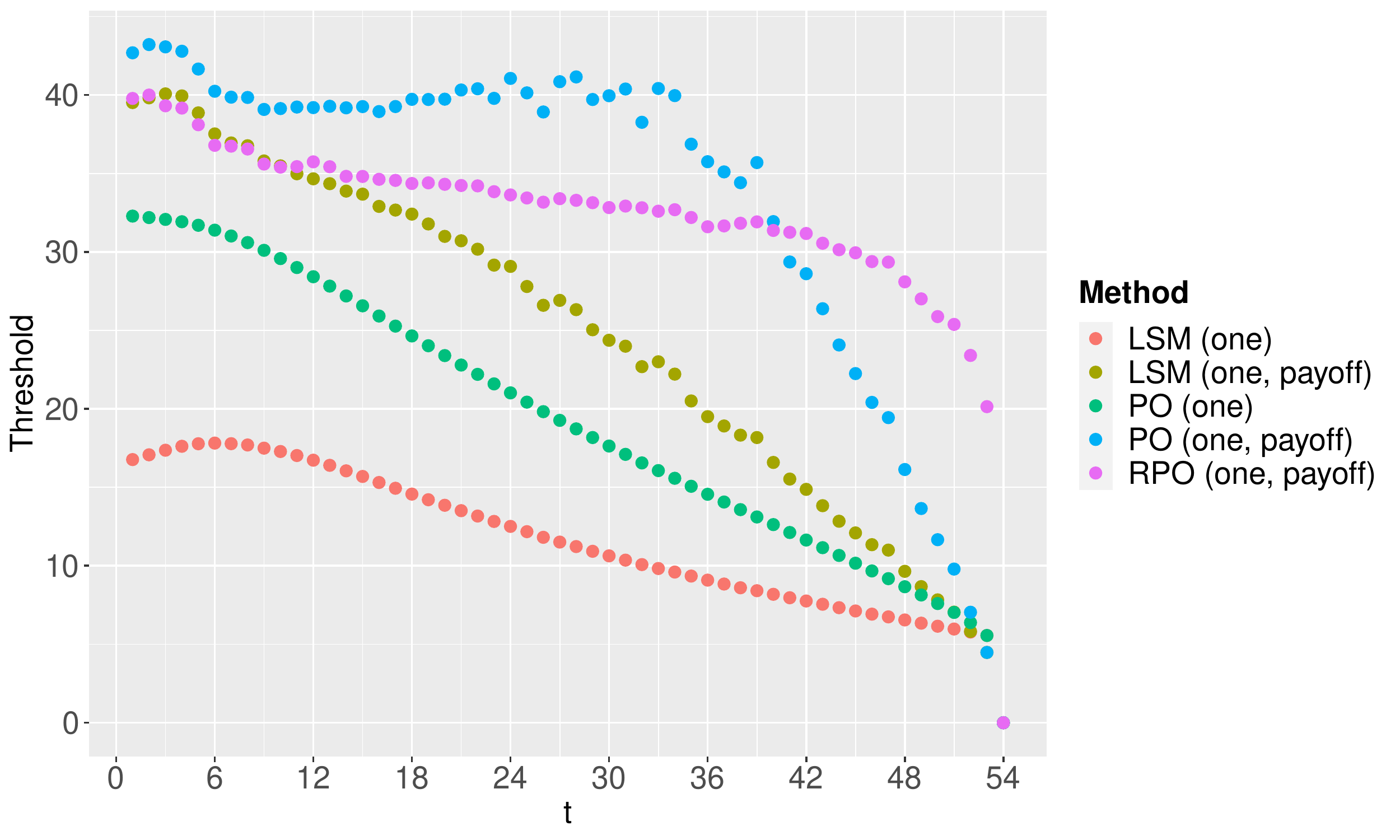}
\end{center}
\caption{Plot of thresholds for policies in $n = 1$ experiment. \label{figure:DFM_neq1_threshold_plot}}
\end{figure}

\subsection{Experiment \#2: multiple assets}
\label{subsec:numerics_neq8}

In our second experiment, we consider instances of our option pricing problem with more than one asset. We specifically consider instances with $n$ varying in $\{4, 8, 16\}$. As in the previous experiment, we vary $\bar{p}$ in $\{90, 100, 110\}$ and set the strike price $K = 100$. Following \cite{desai2012pathwise}, we set the barrier price $B = 170$. For each initial price $\bar{p}$ and each value of $n$, we perform ten replications, where in each replication we generate a training set of $\Omega = 20,000$ trajectories, and a testing set of 100,000 trajectories. In what follows, we focus on the results for $n = 8$, and relegate the performance results for $n = 4$ and $n = 16$ to Section~\ref{subsec:additional_numerics_neq4_16} of the ecompanion.

We again test the LSM, PO and RPO methods with a variety of basis function architectures. We also obtain upper bounds from the PO method by reporting the objective value of the pathwise optimization linear program, which is a biased upper bound on the expected reward. We opt for this simpler approach over producing an unbiased upper bound (by generating an independent set of trajectories and the corresponding inner paths; see \citealt{desai2012pathwise}) due to the significant computation time required in generating the inner paths. We note that this inexact approach has also been used in other work that has implemented the PO method \citep{ciocan2020interpretable}. 

Table~\ref{table:DFM_performance} reports the out-of-sample performance of the LSM, PO and RPO methods, as well as the (biased) PO upper bound, for the different basis function architectures. Note that the table is organized so that groups of policies corresponding to the same policy class are grouped together. (For example, LSM/PO with \textsc{one} and \textsc{prices}, LSM/PO with \textsc{one}, \textsc{prices} and \textsc{payoff}, and RPO with \textsc{one}, \textsc{prices} and \textsc{payoff} appear together.) 

\begin{table}
\centering
\footnotesize
\begin{tabular}{lllll} \toprule
& & \multicolumn{3}{c}{Initial price} \\
Method & Basis function architecture & $\bar{p} = 90$ & $\bar{p} = 100$ & $\bar{p} = 110$ \\ \midrule
LSM & \textsc{one} & 33.77\enskip (0.023) & 38.67\enskip (0.010) & 43.13\enskip (0.013) \\ 
  LSM & \textsc{one, payoff} & 41.18\enskip (0.033) & 43.21\enskip (0.037) & 45.00\enskip (0.027) \\ 
  PO & \textsc{one} & 41.08\enskip (0.015) & 45.91\enskip (0.021) & 48.84\enskip (0.016) \\ 
  PO & \textsc{one, payoff} & 22.25\enskip (0.177) & 16.07\enskip (0.144) & 11.57\enskip (0.119) \\ 
  RPO & \textsc{one, payoff} & \bfseries 45.30\enskip (0.022) & \bfseries 51.10\enskip (0.012) & \bfseries 53.46\enskip (0.053) \\[0.5em] 
  PO-UB & \textsc{one} & 52.19\enskip (0.021) & 57.45\enskip (0.020) & 60.35\enskip (0.010) \\ 
  PO-UB & \textsc{one, payoff} & 46.37\enskip (0.024) & 52.68\enskip (0.051) & 56.02\enskip (0.047) \\ \midrule
  
  LSM & \textsc{prices} & 33.81\enskip (0.024) & 38.54\enskip (0.013) & 43.02\enskip (0.013) \\ 
  LSM & \textsc{prices, payoff} & 39.56\enskip (0.030) & 41.74\enskip (0.033) & 44.12\enskip (0.025) \\ 
  PO & \textsc{prices} & 40.93\enskip (0.016) & 44.83\enskip (0.014) & 47.49\enskip (0.016) \\ 
  PO & \textsc{prices, payoff} & 22.28\enskip (0.124) & 15.89\enskip (0.116) & 11.04\enskip (0.091) \\ 
  RPO & \textsc{prices, payoff} & \bfseries 44.49\enskip (0.018) & \bfseries 49.77\enskip (0.029) & \bfseries 52.23\enskip (0.035) \\[0.5em]  
  PO-UB & \textsc{prices} & 51.40\enskip (0.023) & 57.20\enskip (0.011) & 60.32\enskip (0.010) \\ 
  PO-UB & \textsc{prices, payoff} & 46.36\enskip (0.024) & 52.64\enskip (0.050) & 55.94\enskip (0.045) \\ \midrule
  
      LSM & \textsc{pricesKO} & 41.42\enskip (0.017) & 49.35\enskip (0.017) & 53.10\enskip (0.009) \\ 
  LSM & \textsc{pricesKO, payoff} & 44.04\enskip (0.017) & 49.62\enskip (0.012) & 52.67\enskip (0.006) \\ 
  PO & \textsc{pricesKO} & 44.32\enskip (0.017) & 49.82\enskip (0.015) & 52.77\enskip (0.018) \\ 
  PO & \textsc{pricesKO, payoff} & 44.18\enskip (0.017) & 50.06\enskip (0.015) & 53.19\enskip (0.007) \\ 
  RPO & \textsc{pricesKO, payoff} & \bfseries 44.53\enskip (0.019) & \bfseries 50.11\enskip (0.013) & \bfseries 53.27\enskip (0.010) \\[0.5em]  
  PO-UB & \textsc{pricesKO} & 48.63\enskip (0.015) & 53.12\enskip (0.010) & 55.57\enskip (0.011) \\ 
  PO-UB & \textsc{pricesKO, payoff} & 46.15\enskip (0.023) & 52.06\enskip (0.034) & 55.08\enskip (0.024) \\ \midrule
  
LSM & \textsc{KOind} & 39.37\enskip (0.020) & 48.09\enskip (0.030) & 53.26\enskip (0.017) \\ 
  LSM & \textsc{KOind, payoff} & 44.26\enskip (0.018) & 50.07\enskip (0.016) & 53.19\enskip (0.010) \\ 
  PO & \textsc{KOind} & 43.87\enskip (0.017) & 50.85\enskip (0.013) & 54.35\enskip (0.009) \\ 
  PO & \textsc{KOind, payoff} & 44.79\enskip (0.025) & 50.89\enskip (0.013) & 53.91\enskip (0.008) \\ 
  RPO & \textsc{KOind, payoff} & \bfseries 45.45\enskip (0.023) & \bfseries 51.37\enskip (0.011) & \bfseries 54.50\enskip (0.010) \\[0.5em]  
  PO-UB & \textsc{KOind} & 49.29\enskip (0.016) & 53.47\enskip (0.015) & 55.69\enskip (0.009) \\ 
  PO-UB & \textsc{KOind, payoff} & 46.15\enskip (0.023) & 52.07\enskip (0.033) & 55.05\enskip (0.021) \\ \midrule
  
    LSM & \textsc{pricesKO, KOind} & 41.84\enskip (0.015) & 49.37\enskip (0.021) & 53.46\enskip (0.009) \\ 
  LSM & \textsc{pricesKO, KOind, payoff} & 43.77\enskip (0.019) & 49.87\enskip (0.018) & 53.11\enskip (0.007) \\ 
  PO & \textsc{pricesKO, KOind} & 44.01\enskip (0.018) & \bfseries 50.91\enskip (0.013) & \bfseries 54.27\enskip (0.008) \\ 
  PO & \textsc{pricesKO, KOind, payoff} & 43.98\enskip (0.021) & 50.69\enskip (0.012) & 53.84\enskip (0.007) \\ 
  RPO & \textsc{pricesKO, KOind, payoff} & \bfseries 44.08\enskip (0.023) & 50.57\enskip (0.031) & 54.23\enskip (0.010) \\[0.5em]  
  PO-UB & \textsc{pricesKO, KOind} & 48.45\enskip (0.020) & 53.09\enskip (0.011) & 55.56\enskip (0.010) \\ 
  PO-UB & \textsc{pricesKO, KOind, payoff} & 46.14\enskip (0.022) & 52.05\enskip (0.033) & 55.04\enskip (0.022) \\ \midrule

  LSM & \textsc{pricesKO, prices2KO, KOind} & 43.32\enskip (0.022) & 49.86\enskip (0.019) & 53.26\enskip (0.013) \\ 
  LSM & \textsc{pricesKO, prices2KO, KOind, payoff} & 44.05\enskip (0.022) & 49.92\enskip (0.019) & 53.14\enskip (0.012) \\ 
  PO & \textsc{pricesKO, prices2KO, KOind} & 44.33\enskip (0.018) & \bfseries 50.78\enskip (0.014) & 53.93\enskip (0.006) \\ 
  PO & \textsc{pricesKO, prices2KO, KOind, payoff} & \bfseries 44.65\enskip (0.018) & 50.65\enskip (0.016) & 53.77\enskip (0.008) \\ 
  RPO & \textsc{pricesKO, prices2KO, KOind, payoff} & 44.62\enskip (0.015) & 50.74\enskip (0.021) & \bfseries 54.03\enskip (0.013) \\[0.5em]  
  PO-UB & \textsc{pricesKO, prices2KO, KOind} & 47.09\enskip (0.016) & 52.43\enskip (0.019) & 55.25\enskip (0.010) \\ 
  PO-UB & \textsc{pricesKO, prices2KO, KOind, payoff} & 46.09\enskip (0.022) & 51.98\enskip (0.033) & 55.00\enskip (0.022) \\ \midrule

  LSM & \textsc{pricesKO, KOind, maxpriceKO, max2priceKO} & 43.83\enskip (0.018) & 49.89\enskip (0.023) & 53.10\enskip (0.008) \\ 
  LSM & \textsc{pricesKO, KOind, maxpriceKO, max2priceKO, payoff} & 43.83\enskip (0.017) & 49.88\enskip (0.022) & 53.10\enskip (0.008) \\ 
  PO & \textsc{pricesKO, KOind, maxpriceKO, max2priceKO} & 43.90\enskip (0.026) & \bfseries 50.66\enskip (0.014) & 53.83\enskip (0.008) \\ 
  PO & \textsc{pricesKO, KOind, maxpriceKO, max2priceKO, payoff} & 44.04\enskip (0.023) & 50.65\enskip (0.015) & 53.82\enskip (0.007) \\ 
  RPO & \textsc{pricesKO, KOind, maxpriceKO, max2priceKO, payoff} & \bfseries 44.14\enskip (0.016) & 50.55\enskip (0.030) & \bfseries54.20\enskip (0.010) \\[0.5em]
  PO-UB & \textsc{pricesKO, KOind, maxpriceKO, max2priceKO} & 46.13\enskip (0.017) & 52.04\enskip (0.033) & 55.04\enskip (0.022) \\ 
  PO-UB & \textsc{pricesKO, KOind, maxpriceKO, max2priceKO, payoff} & 46.12\enskip (0.023) & 52.04\enskip (0.033) & 55.03\enskip (0.021) \\ %

  \bottomrule
\end{tabular}
\caption{Out-of-sample performance for different policies, for $n = 8$ assets. \label{table:DFM_performance}}
\end{table}

From this table, we can see that within each policy class, the RPO method in general outperforms the LSM method. In some cases the difference can be substantial: for example, with $\bar{p} = 90$ and the policy class corresponding to linear functions of \textsc{KOind} and \textsc{payoff}, the best LSM policy achieves a reward of 44.26 whereas RPO achieves a reward of 45.45, which is an improvement of 2.6\%. Relative to the PO method, the performance of the RPO method in most cases is better, and in a few cases is slightly worse (for example, for $\bar{p} = 110$ and the \textsc{pricesKO}, \textsc{KOind} and \textsc{payoff} policy class, the best PO policy attains a reward of 54.27 compared to 54.23 for the RPO policy). 

In addition to the comparison of the methods within a fixed policy class, it is also insightful to compare the methods across policy classes, i.e., to think of what is the best attainable performance across any basis function architecture. In this regard, the highest rewards for all three initial prices are attained by the RPO method with \textsc{KOind} and \textsc{payoff} as the basis functions (45.45 for $\bar{p} = 90$, 51.37 for $\bar{p} = 100$, 54.50 for $\bar{p} = 110$). The best performance for the LSM method across any of the basis function architectures is substantially lower (44.26 for $\bar{p} = 90$, 50.07 for $\bar{p} = 100$, 53.46 for $\bar{p} = 110$). The best performance for the PO method is better, but still lower (44.79 for $\bar{p} = 90$, 50.91 for $\bar{p} = 100$, 54.35 for $\bar{p} = 110$). 

Beside the performance, it is also useful to compare the methods in terms of computation time. Table~\ref{table:DFM_computation_time} below shows the average computation time for each of the methods. For LSM, this is just the time to apply the LSM algorithm. For PO, this time includes the time to solve the PO linear program using Gurobi and the time to execute the regression, as well as the time to generate the inner paths and the time to formulate problem in JuMP. For RPO, this time is the time to apply the backward algorithm (Algorithm~\ref{algorithm:backward}), which includes the time to solve the stage $t$ problem~\eqref{prob:backward_period_t_algo} using Adam, but does not include the time to obtain the initial starting point using LSM. 

\begin{table}
\centering
\footnotesize
\begin{tabular}{lllll} \toprule
& & \multicolumn{3}{c}{Initial price} \\
Method & Basis function architecture & $\bar{p} = 90$ & $\bar{p} = 100$ & $\bar{p} = 110$ \\ \midrule
  LSM & \textsc{one} & 2.34\enskip (0.248) & 1.45\enskip (0.020) & 2.23\enskip (0.245) \\ 
  LSM & \textsc{one, payoff} & 2.22\enskip (0.238) & 2.21\enskip (0.219) & 2.32\enskip (0.223) \\ 
  PO & \textsc{one} & 737.90\enskip (49.467) & 632.84\enskip (39.330) & 734.50\enskip (55.495) \\ 
  PO & \textsc{one, payoff} & 821.11\enskip (20.357) & 652.49\enskip (22.014) & 727.22\enskip (25.883) \\ 
  RPO & \textsc{one, payoff} & 80.65\enskip (5.193) & 332.82\enskip (28.783) & 305.96\enskip (21.902) \\ \midrule
LSM & \textsc{KOind} & 2.32\enskip (0.159) & 3.01\enskip (0.207) & 2.37\enskip (0.267) \\ 
  LSM & \textsc{KOind, payoff} & 2.79\enskip (0.318) & 3.73\enskip (0.357) & 3.70\enskip (0.335) \\ 
  PO & \textsc{KOind} & 899.30\enskip (21.038) & 851.05\enskip (18.996) & 838.18\enskip (27.319) \\ 
  PO & \textsc{KOind, payoff} & 944.45\enskip (26.243) & 819.51\enskip (20.691) & 822.24\enskip (25.099) \\ 
  RPO & \textsc{KOind, payoff} & 4.87\enskip (0.541) & 16.86\enskip (1.794) & 20.41\enskip (1.977) \\ \midrule
  LSM & \textsc{prices} & 2.22\enskip (0.190) & 3.09\enskip (0.286) & 3.02\enskip (0.218) \\ 
  LSM & \textsc{prices, payoff} & 3.18\enskip (0.294) & 4.35\enskip (0.291) & 3.25\enskip (0.203) \\ 
  PO & \textsc{prices} & 902.42\enskip (27.296) & 913.33\enskip (25.858) & 938.11\enskip (18.843) \\ 
  PO & \textsc{prices, payoff} & 1098.98\enskip (21.250) & 943.94\enskip (33.928) & 1070.94\enskip (32.581) \\ 
  RPO & \textsc{prices, payoff} & 174.12\enskip (11.573) & 565.27\enskip (21.773) & 584.55\enskip (24.491) \\ \midrule
  LSM & \textsc{pricesKO} & 3.64\enskip (0.334) & 4.32\enskip (0.493) & 4.12\enskip (0.383) \\ 
  LSM & \textsc{pricesKO, payoff} & 2.43\enskip (0.248) & 3.07\enskip (0.367) & 3.23\enskip (0.303) \\ 
  PO & \textsc{pricesKO} & 1163.28\enskip (19.574) & 1092.48\enskip (13.862) & 1093.83\enskip (17.630) \\ 
  PO & \textsc{pricesKO, payoff} & 1269.60\enskip (25.366) & 1155.69\enskip (22.082) & 1158.41\enskip (34.489) \\ 
  RPO & \textsc{pricesKO, payoff} & 6.55\enskip (0.719) & 14.52\enskip (1.077) & 14.56\enskip (1.329) \\ \midrule
  LSM & \textsc{pricesKO, KOind} & 3.90\enskip (0.302) & 5.05\enskip (0.347) & 4.62\enskip (0.346) \\ 
  LSM & \textsc{pricesKO, KOind, payoff} & 3.03\enskip (0.454) & 3.84\enskip (0.165) & 3.30\enskip (0.371) \\ 
  PO & \textsc{pricesKO, KOind} & 1268.21\enskip (35.401) & 1099.33\enskip (27.684) & 1114.49\enskip (23.758) \\ 
  PO & \textsc{pricesKO, KOind, payoff} & 1395.44\enskip (23.614) & 1251.74\enskip (35.740) & 1202.02\enskip (23.449) \\ 
  RPO & \textsc{pricesKO, KOind, payoff} & 10.46\enskip (1.729) & 22.45\enskip (1.831) & 22.43\enskip (2.184) \\ \midrule
  LSM & \textsc{pricesKO, prices2KO, KOind} & 8.41\enskip (0.353) & 7.96\enskip (0.429) & 8.02\enskip (0.552) \\ 
  LSM & \textsc{pricesKO, prices2KO, KOind, payoff} & 6.61\enskip (0.334) & 11.39\enskip (1.338) & 8.59\enskip (0.520) \\ 
  PO & \textsc{pricesKO, prices2KO, KOind} & 4712.40\enskip (48.063) & 4303.43\enskip (186.668) & 4824.68\enskip (190.066) \\ 
  PO & \textsc{pricesKO, prices2KO, KOind, payoff} & 3347.31\enskip (21.754) & 4884.33\enskip (188.597) & 4787.41\enskip (150.816) \\ 
  RPO & \textsc{pricesKO, prices2KO, KOind, payoff} & 38.18\enskip (2.942) & 87.08\enskip (8.357) & 66.91\enskip (5.050) \\ \midrule
  LSM & \textsc{pricesKO, KOind, maxpriceKO, max2priceKO} & 2.63\enskip (0.136) & 4.39\enskip (0.388) & 4.95\enskip (0.431) \\ 
  LSM & \textsc{pricesKO, KOind, maxpriceKO, max2priceKO, payoff} & 2.82\enskip (0.176) & 5.48\enskip (0.480) & 5.44\enskip (0.513) \\ 
  PO & \textsc{pricesKO, KOind, maxpriceKO, max2priceKO} & 1026.37\enskip (9.217) & 1561.72\enskip (50.908) & 1534.24\enskip (23.832) \\ 
  PO & \textsc{pricesKO, KOind, maxpriceKO, max2priceKO, payoff} & 1012.27\enskip (6.559) & 1597.34\enskip (37.282) & 1491.44\enskip (28.252) \\ 
  RPO & \textsc{pricesKO, KOind, maxpriceKO, max2priceKO, payoff} & 12.37\enskip (0.710) & 37.34\enskip (3.715) & 33.59\enskip (3.213) \\ \bottomrule
\end{tabular}

\caption{Computation time for different policies, for $n = 8$ assets. \label{table:DFM_computation_time} }
\end{table}

From this table, we can see that LSM in general requires the least amount of computation time, requiring no more than 12 seconds on average. The RPO method requires more time, but in all cases its computation time is reasonable: in general, it requires no more than 585 seconds (approximately 10 minutes) on average. We note that the computation time for RPO is in general not monotonic in in the size of the basis function architecture: for example, RPO with \textsc{one} and \textsc{payoff} (total of 2 basis functions) requires more time than RPO with \textsc{pricesKO}, \textsc{prices2KO}, \textsc{KOind}, \textsc{payoff} (total of 46 basis functions). This is likely due to the non-convex nature of the objective function in the period $t$ problem of the backward algorithm. In particular, with \textsc{one} and \textsc{payoff}, the initial starting point produced by LSM could be further away from an approximately stationary point and Adam may require more iterations before termination, whereas with \textsc{pricesKO}, \textsc{prices2KO}, \textsc{KOind} and \textsc{payoff} it may be closer and Adam may terminate more quickly. 

Comparing to the PO method, we can see that the PO method requires a significantly larger amount of time than RPO, with the average computation time ranging from about 632 seconds ($\bar{p} = 100$, PO with \textsc{one}; just over 10 minutes) to 4884 seconds ($\bar{p} = 100$, PO with \textsc{pricesKO}, \textsc{prices2KO}, \textsc{KOind}, \textsc{payoff}; roughly 80 minutes). The majority of this time comes from the generation of the inner paths, which is in general a computationally intensive task. Although RPO occasionally performs slightly worse than PO as we saw in Table~\ref{table:DFM_performance}, RPO may still be preferable to PO for obtaining good policies due to the significant computation times required by PO. 

Lastly, we note that the computation time of the RPO method is sensitive to several implementation decisions. As alluded to above, the choice of starting point for problem~\eqref{prob:backward_period_t_algo}, as well as the number of starting points used, will directly affect the time required for Adam to converge. Another decision is the step size used for Adam. In our experimentation, a smaller step size would lead to slower convergence, but would generally result in better solutions.

\section{Conclusion}
\label{sec:conclusion}
In this paper, we consider the problem of designing randomized policies for high-dimensional optimal stopping problems. We formulate the problem as an SAA problem, prove its convergence properties and establish generalization error bounds on the out-of-sample reward. Based on the NP-Hardness of the SAA problem, we develop a backward optimization heuristic for approximately solving the SAA problem. We show in the numerical experiments that our heuristic can achieve better performance than the LSM method and is better or comparable to the PO method. 

There are at least two interesting directions for future work. First, it would be interesting to further understand the behavior of the non-convex objective function of the randomized policy SAA problem and of the period $t$ problem in the backward optimization heuristic, and to understand how one can obtain high quality solutions to both of these problems. In particular, our experimentation suggests that quality of the solution in the period $t$ problem is fairly sensitive to the choice of starting point, so it would be interesting to explore other ways of selecting initial points, as well as other methods beside Adam for solving the period $t$ problem. Second, it would be interesting to explore whether our methodology can be generalized to other stochastic dynamic programming problems outside of optimal stopping.

\bibliographystyle{plainnat}
\bibliography{RPOS_literature}

\begin{thebibliography}{35}
\providecommand{\natexlab}[1]{#1}
\providecommand{\url}[1]{\texttt{#1}}
\expandafter\ifx\csname urlstyle\endcsname\relax
  \providecommand{\doi}[1]{doi: #1}\else
  \providecommand{\doi}{doi: \begingroup \urlstyle{rm}\Url}\fi

\bibitem[Ak{\c{c}}aku{\c{s}} and Mi{\v{s}}i{\'c}(2021)]{akcakus2021exact}
{\.I}.~Ak{\c{c}}aku{\c{s}} and V.~V. Mi{\v{s}}i{\'c}.
\newblock Exact logit-based product design.
\newblock \emph{Available at SSRN 3875986}, 2021.

\bibitem[Andersen and Broadie(2004)]{andersen2004primal}
L.~Andersen and M.~Broadie.
\newblock {Primal-dual simulation algorithm for pricing multidimensional
  American options}.
\newblock \emph{Management Science}, 50\penalty0 (9):\penalty0 1222--1234,
  2004.

\bibitem[Bertsimas and Kallus(2020)]{bertsimas2020predictive}
D.~Bertsimas and N.~Kallus.
\newblock From predictive to prescriptive analytics.
\newblock \emph{Management Science}, 66\penalty0 (3):\penalty0 1025--1044,
  2020.

\bibitem[Bezanson et~al.(2017)Bezanson, Edelman, Karpinski, and
  Shah]{bezanson2017julia}
J.~Bezanson, A.~Edelman, S.~Karpinski, and V.~B. Shah.
\newblock Julia: A fresh approach to numerical computing.
\newblock \emph{{SIAM Review}}, 59\penalty0 (1):\penalty0 65--98, 2017.

\bibitem[Brown and Smith(2022)]{brown2022information}
D.~B. Brown and J.~E. Smith.
\newblock Information relaxations and duality in stochastic dynamic programs: A
  review and tutorial.
\newblock \emph{Working paper}, 2022.

\bibitem[Brown et~al.(2010)Brown, Smith, and Sun]{brown2010information}
D.~B. Brown, J.~E. Smith, and P.~Sun.
\newblock Information relaxations and duality in stochastic dynamic programs.
\newblock \emph{Operations research}, 58\penalty0 (4-part-1):\penalty0
  785--801, 2010.

\bibitem[Candes et~al.(2015)Candes, Li, and Soltanolkotabi]{candes2015phase}
E.~J. Candes, X.~Li, and M.~Soltanolkotabi.
\newblock Phase retrieval via wirtinger flow: Theory and algorithms.
\newblock \emph{IEEE Transactions on Information Theory}, 61\penalty0
  (4):\penalty0 1985--2007, 2015.

\bibitem[Carriere(1996)]{carriere1996valuation}
J.~F. Carriere.
\newblock Valuation of the early-exercise price for derivative securities using
  simulations and splines.
\newblock \emph{Insurance: Mathematics and Economics}, 19\penalty0
  (1):\penalty0 19--30, 1996.

\bibitem[Chen and Glasserman(2007)]{chen2007additive}
N.~Chen and P.~Glasserman.
\newblock {Additive and multiplicative duals for American option pricing}.
\newblock \emph{Finance and Stochastics}, 11\penalty0 (2):\penalty0 153--179,
  2007.

\bibitem[Ciocan and Mi{\v{s}}i{\'c}(2022)]{ciocan2020interpretable}
D.~F. Ciocan and V.~V. Mi{\v{s}}i{\'c}.
\newblock Interpretable optimal stopping.
\newblock \emph{Management Science}, 68\penalty0 (3):\penalty0 1616--1638,
  2022.

\bibitem[Cohen et~al.(2017)Cohen, Leung, Panchamgam, Perakis, and
  Smith]{cohen2017impact}
M.~C. Cohen, N.-H.~Z. Leung, K.~Panchamgam, G.~Perakis, and A.~Smith.
\newblock The impact of linear optimization on promotion planning.
\newblock \emph{Operations Research}, 65\penalty0 (2):\penalty0 446--468, 2017.

\bibitem[Desai et~al.(2012)Desai, Farias, and Moallemi]{desai2012pathwise}
V.~V. Desai, V.~F. Farias, and C.~C. Moallemi.
\newblock Pathwise optimization for optimal stopping problems.
\newblock \emph{Management Science}, 58\penalty0 (12):\penalty0 2292--2308,
  2012.

\bibitem[Dunning et~al.(2017)Dunning, Huchette, and Lubin]{dunning2017jump}
I.~Dunning, J.~Huchette, and M.~Lubin.
\newblock {JuMP: A modeling language for mathematical optimization}.
\newblock \emph{SIAM Review}, 59\penalty0 (2):\penalty0 295--320, 2017.

\bibitem[Elmachtoub and Grigas(2021)]{elmachtoub2021smart}
A.~N. Elmachtoub and P.~Grigas.
\newblock Smart ``predict, then optimize''.
\newblock \emph{Management Science}, 2021.

\bibitem[Ferreira et~al.(2016)Ferreira, Lee, and
  Simchi-Levi]{ferreira2016analytics}
K.~J. Ferreira, B.~H.~A. Lee, and D.~Simchi-Levi.
\newblock Analytics for an online retailer: Demand forecasting and price
  optimization.
\newblock \emph{Manufacturing \& Service Operations Management}, 18\penalty0
  (1):\penalty0 69--88, 2016.

\bibitem[Garey and Johnson(1979)]{garey2002computers}
M.~R. Garey and D.~S. Johnson.
\newblock \emph{Computers and intractability}.
\newblock W. H. Freeman New York, 1979.

\bibitem[Ge et~al.(2015)Ge, Huang, Jin, and Yuan]{ge2015escaping}
R.~Ge, F.~Huang, C.~Jin, and Y.~Yuan.
\newblock Escaping from saddle points—online stochastic gradient for tensor
  decomposition.
\newblock In \emph{Conference on learning theory}, pages 797--842. PMLR, 2015.

\bibitem[Goodfellow et~al.(2016)Goodfellow, Bengio, and
  Courville]{goodfellow2016deep}
I.~Goodfellow, Y.~Bengio, and A.~Courville.
\newblock \emph{Deep learning}.
\newblock MIT press, 2016.

\bibitem[{Gurobi Optimization, Inc.}(2022)]{gurobi}
{Gurobi Optimization, Inc.}
\newblock {Gurobi Optimizer Reference Manual}, 2022.
\newblock URL \url{http://www.gurobi.com}.

\bibitem[Haugh and Kogan(2004)]{haugh2004pricing}
M.~B. Haugh and L.~Kogan.
\newblock {Pricing American options: a duality approach}.
\newblock \emph{Operations Research}, 52\penalty0 (2):\penalty0 258--270, 2004.

\bibitem[Jain and Kar(2017)]{jain2017non}
P.~Jain and P.~Kar.
\newblock Non-convex optimization for machine learning.
\newblock \emph{Foundations and Trends{\textregistered} in Machine Learning},
  10\penalty0 (3-4):\penalty0 142--336, 2017.

\bibitem[Kingma and Ba(2014)]{kingma2014adam}
D.~P. Kingma and J.~Ba.
\newblock Adam: A method for stochastic optimization.
\newblock \emph{arXiv preprint arXiv:1412.6980}, 2014.

\bibitem[Liang(2018)]{liang2018notes}
P.~Liang.
\newblock {CS229T/STAT231: Statistical Learning Theory (Winter 2016) Lecture
  Notes}, 2018.
\newblock URL
  \url{https://github.com/percyliang/cs229t/blob/master/lectures/notes.pdf}.

\bibitem[Longstaff and Schwartz(2001)]{longstaff2001valuing}
F.~A. Longstaff and E.~S. Schwartz.
\newblock {Valuing American options by simulation: a simple least-squares
  approach}.
\newblock \emph{The Review of Financial Studies}, 14\penalty0 (1):\penalty0
  113--147, 2001.

\bibitem[Lubin and Dunning(2015)]{lubin2015computing}
M.~Lubin and I.~Dunning.
\newblock {Computing in operations research using Julia}.
\newblock \emph{INFORMS Journal on Computing}, 27\penalty0 (2):\penalty0
  238--248, 2015.

\bibitem[Maurer(2016)]{maurer2016vector}
A.~Maurer.
\newblock A vector-contraction inequality for rademacher complexities.
\newblock In \emph{International Conference on Algorithmic Learning Theory},
  pages 3--17. Springer, 2016.

\bibitem[Mohri et~al.(2018)Mohri, Rostamizadeh, and
  Talwalkar]{mohri2018foundations}
M.~Mohri, A.~Rostamizadeh, and A.~Talwalkar.
\newblock \emph{Foundations of machine learning}.
\newblock MIT press, 2018.

\bibitem[Munkres(1991)]{munkres1991analysis}
J.~R. Munkres.
\newblock \emph{Analysis on manifolds}.
\newblock Addison-Wesley Publishing Company, 1991.

\bibitem[Netrapalli et~al.(2015)Netrapalli, Jain, and
  Sanghavi]{netrapalli2015phase}
P.~Netrapalli, P.~Jain, and S.~Sanghavi.
\newblock Phase retrieval using alternating minimization.
\newblock \emph{IEEE Transactions on Signal Processing}, 63\penalty0
  (18):\penalty0 4814--4826, 2015.

\bibitem[Okamoto(1973)]{okamoto1973distinctness}
M.~Okamoto.
\newblock Distinctness of the eigenvalues of a quadratic form in a multivariate
  sample.
\newblock \emph{The Annals of Statistics}, pages 763--765, 1973.

\bibitem[Rogers(2002)]{rogers2002monte}
L.~C.~G. Rogers.
\newblock {Monte Carlo valuation of American options}.
\newblock \emph{Mathematical Finance}, 12\penalty0 (3):\penalty0 271--286,
  2002.

\bibitem[Shapiro et~al.(2014)Shapiro, Dentcheva, and
  Ruszczy{\'n}ski]{shapiro2014lectures}
A.~Shapiro, D.~Dentcheva, and A.~Ruszczy{\'n}ski.
\newblock \emph{Lectures on stochastic programming: modeling and theory}.
\newblock SIAM, 2014.

\bibitem[Sturt(2021)]{sturt2021nonparametric}
B.~Sturt.
\newblock A nonparametric algorithm for optimal stopping based on robust
  optimization.
\newblock \emph{arXiv preprint arXiv:2103.03300}, 2021.

\bibitem[Tsitsiklis and Van~Roy(2001)]{tsitsiklis2001regression}
J.~N. Tsitsiklis and B.~Van~Roy.
\newblock {Regression methods for pricing complex American-style options}.
\newblock \emph{IEEE Transactions on Neural Networks}, 12\penalty0
  (4):\penalty0 694--703, 2001.

\bibitem[Udell and Boyd(2013)]{udell2013maximizing}
M.~Udell and S.~Boyd.
\newblock Maximizing a sum of sigmoids.
\newblock \emph{Optimization and Engineering}, pages 1--25, 2013.

\end{thebibliography}

\ECSwitch

\ECHead{Electronic companion for ``Randomized Policy Optimization for Optimal Stopping''}

\section{Proofs}

\subsection{Proof of Theorem~\ref{theorem:deterministic_randomized_SAA_equal}}
\label{proof_theorem:deterministic_randomized_SAA_equal}

We prove this result in two steps. We first show that $\max_{\bb \in \Bcal} \hat{J}_D(\bb) \leq \sup_{\tilde{\bb} \in \tilde{\Bcal}} \hat{J}_R(\tilde{\bb})$, and then show that $\sup_{\bb \in \Bcal} \hat{J}_D(\bb) \geq \sup_{\tilde{\bb} \in \tilde{\Bcal}} \hat{J}_R(\tilde{\bb})$.\\

\emph{Proof of $\sup_{\bb \in \Bcal} \hat{J}_D(\bb) \leq \sup_{\tilde{\bb} \in \tilde{\Bcal}} \hat{J}_R(\tilde{\bb})$:} To establish this, fix any deterministic policy weight vector $\bb \in \Bcal$. 

Without loss of generality, we can assume that $\bb_t \bullet \Phi(\xb(\omega,t))$ satisfies either  $\bb_t \bullet \Phi(\xb(\omega,t)) > 0$ or $\bb_t \bullet \Phi(\xb(\omega,t)) < 0$ for each $\omega$ and $t$. (Stated differently, $\bb_t \bullet \Phi(\xb(\omega,t))$ cannot be exactly equal to zero.) If this is not the case, then using Assumption~\ref{assumption:constant_basis_function}, we can modify the weight $b_{t,1}$ of the constant basis function $\phi_1(\xb) = 1$ for any period $t$ such that the condition is satisfied, and the sample-average reward $\hat{J}_D(\bb)$ remains unchanged.

Now, consider the randomized policy weight vector $\bb'$ defined as $\bb' = \alpha \bb$, where $\alpha > 0$. Observe now that, since $\bb_t \bullet \Phi(\xb(\omega,t)) > 0$ or $\bb_t \bullet \Phi(\xb(\omega,t)) < 0$ for each $\omega$ and $t$, we have that 
\begin{align*}
\lim_{\alpha \to +\infty} \sigma( \bb'_t \bullet \Phi(\xb(\omega,t))) & = \lim_{\alpha \to +\infty} \sigma( \alpha \bb_t \bullet \Phi(\xb(\omega,t))) \\
& = \left\{ \begin{array}{ll} +1 & \text{if}\ \bb_t \bullet \Phi(\xb(\omega,t)) > 0, \\
0 & \text{if} \ \bb_t \bullet \Phi(\xb(\omega,t)) \leq 0 \\ \end{array} \right. \\
& = \Ibb\{ \bb_t \bullet \Phi(\xb(\omega,t)) > 0 \}.
\end{align*}
Consequently, we have that
\begin{align*}
\lim_{\alpha \to +\infty} \hat{J}_R(\bb') & = \lim_{\alpha \to +\infty} \hat{J}_R(\alpha \bb) \\
& =  \lim_{\alpha \to +\infty} \frac{1}{\Omega} \sum_{\omega = 1}^{\Omega} \sum_{t=1}^T  g(t, \xb(\omega,t)) \cdot \prod_{t'=1}^{t-1} (1 - \sigma( \alpha \bb_{t'} \bullet \Phi(\xb(\omega,t')) )) \cdot \sigma ( \alpha \bb_t \bullet \Phi(\xb(\omega,t)) ) \\
& = \frac{1}{\Omega} \sum_{\omega = 1}^{\Omega} \sum_{t=1}^T g(t, \xb(\omega,t)) \cdot \prod_{t'=1}^{t-1} (1 - \Ibb\{ \bb_{t'} \bullet \Phi(\xb(\omega,t')) > 0 \} ) \cdot \Ibb\{ \bb_t \bullet \Phi(\xb(\omega,t)) > 0\} \\
& = \hat{J}_D(\bb). 
\end{align*}
Since $\bb' \in \tilde{\Bcal} = \mathbb{R}^{KT}$, we have that $\hat{J}_R( \alpha \bb ) \leq \sup_{\tilde{\bb} \in \tilde{\Bcal}} \hat{J}_R(\tilde{\bb})$ for all $\alpha > 0$; as a result, the limit of $\hat{J}_R(\alpha \bb)$ as $\alpha \to \infty$ must also be upper bounded by $\sup_{\tilde{\bb} \in \tilde{\Bcal}} \hat{J}_R(\tilde{\bb})$. We thus have that $\sup_{\tilde{\bb} \in \tilde{\Bcal}} \hat{J}_R(\tilde{\bb})$ is an upper bound on $\hat{J}_D(\bb)$ for any $\bb \in \Bcal$. 

By the definition of the supremum, it therefore follows that
\begin{equation}
\sup_{\bb \in \Bcal} \hat{J}_D(\bb) \leq \sup_{\tilde{\bb} \in \tilde{\Bcal}} \hat{J}_R(\tilde{\bb}). 
\end{equation}

\emph{Proof of $\sup_{\bb \in \Bcal} \hat{J}_D(\bb) \geq \sup_{\tilde{\bb} \in \tilde{\Bcal}} \hat{J}_R(\tilde{\bb})$:} To establish this inequality, fix a randomized policy weight vector $\tilde{\bb}$ from $\tilde{\Bcal}$. The key idea in the proof is that the logistic response function $\sigma(\cdot)$ can also be viewed as the cumulative distribution function (CDF) of a logistic random variable. Recall that a logistic random variable, $\xi \sim \Logistic(\mu, s)$, where $\mu$ is the location parameter and $s$ is the scale parameter, has CDF given by
\begin{equation*}
\Pbb( \xi < t) = \frac{ e^{ (t - \mu)/s }}{ 1 + e^{ (t - \mu)/s}}.
\end{equation*}
Thus, the logistic response function $\sigma(\cdot)$ corresponds to a $\Logistic(0,1)$ random variable. 

Armed with this insight, let us define $T$ i.i.d. $\Logistic(0,1)$ random variables, $\xi_1, \dots, \xi_T$. Observe that we can write the reward of the randomized policy as 
\begin{align}
\hat{J}_R(\tilde{\bb}) & = \frac{1}{\Omega} \sum_{\omega = 1}^{\Omega} \sum_{t=1}^T  g(t, \xb(\omega,t)) \cdot \prod_{t'=1}^{t-1} (1 - \sigma( \tilde{\bb}_{t'} \bullet \Phi(\xb(\omega,t')) )) \cdot \sigma ( \tilde{\bb}_t \bullet \Phi(\xb(\omega,t)) ) \nonumber \\
& = \frac{1}{\Omega} \sum_{\omega = 1}^{\Omega} \sum_{t=1}^T  g(t, \xb(\omega,t)) \cdot \prod_{t'=1}^{t-1} \Pbb( \xi_{t'} \geq \tilde{\bb}_{t'} \bullet \Phi( \xb(\omega,t')) )  \cdot \Pbb( \xi_t < \tilde{\bb}_t \bullet \Phi(\xb(\omega,t))  ) \nonumber \\
& = \frac{1}{\Omega} \sum_{\omega = 1}^{\Omega} \sum_{t=1}^T  g(t, \xb(\omega,t)) \cdot \prod_{t'=1}^{t-1} \Exp[ \Ibb\{ \xi_{t'} \geq \tilde{\bb}_{t'} \bullet \Phi( \xb(\omega,t')) \} ]  \cdot \Exp[ \Ibb \{  \xi_t < \tilde{\bb}_t \bullet \Phi(\xb(\omega,t))  \} ] \nonumber \\
& = \frac{1}{\Omega} \sum_{\omega = 1}^{\Omega} \sum_{t=1}^T  g(t, \xb(\omega,t)) \cdot  \Exp \left[ \prod_{t'=1}^{t-1} \Ibb\{ \xi_{t'} \geq \tilde{\bb}_{t'} \bullet \Phi( \xb(\omega,t')) \}   \cdot  \Ibb \{  \xi_t < \tilde{\bb}_t \bullet \Phi(\xb(\omega,t))  \} \right] \nonumber \\
& = \Exp \left[ \frac{1}{\Omega} \sum_{\omega = 1}^{\Omega} \sum_{t=1}^T  g(t, \xb(\omega,t)) \cdot   \prod_{t'=1}^{t-1} \Ibb\{ \xi_{t'} \geq \tilde{\bb}_{t'} \bullet \Phi( \xb(\omega,t')) \}   \cdot  \Ibb \{  \xi_t < \tilde{\bb}_t \bullet \Phi(\xb(\omega,t))  \} \right] \label{eq:J_R_exp_logistic_RVs}
\end{align}
where the second equality follows by the definition of each $\xi_t$ as a $\Logistic(0,1)$ random variable; the third by the fact that $\Pbb(A) = \Exp[\Ibb\{A\}]$ for any event $A$; the fourth by the fact that $\xi_1,\dots, \xi_T$ are independent; and the fifth by the linearity of expectation.

We now observe that there must exist values $\bar{\xi}_1,\dots, \bar{\xi}_T$ for which the random variable in \eqref{eq:J_R_exp_logistic_RVs} is at least its expected value, i.e., 
\begin{align*}
& \Exp \left[ \frac{1}{\Omega} \sum_{\omega = 1}^{\Omega} \sum_{t=1}^T  g(t, \xb(\omega,t)) \cdot   \prod_{t'=1}^{t-1} \Ibb\{ \xi_{t'} \geq \tilde{\bb}_{t'} \bullet \Phi( \xb(\omega,t')) \}   \cdot  \Ibb \{  \xi_t < \tilde{\bb}_t \bullet \Phi(\xb(\omega,t))  \} \right] \\
& \leq  \frac{1}{\Omega} \sum_{\omega = 1}^{\Omega} \sum_{t=1}^T  g(t, \xb(\omega,t)) \cdot   \prod_{t'=1}^{t-1} \Ibb\{ \bar{\xi}_{t'} \geq \tilde{\bb}_{t'} \bullet \Phi( \xb(\omega,t')) \}   \cdot  \Ibb \{  \bar{\xi}_t < \tilde{\bb}_t \bullet \Phi(\xb(\omega,t))  \}.
\end{align*}
Finally, let us define a deterministic policy weight vector $\bb$ as 
\begin{equation*}
b_{t,k} = \left\{ \begin{array}{ll} \tilde{b}_{t,k} - \bar{\xi}_t & \text{if}\ k = 1, \\
\tilde{b}_{t,k} & \text{if}\ k \neq 1, \end{array} \right.
\end{equation*}
for each $t$ and $k$. In other words, we decrease the weight on the constant basis function exactly by $\bar{\xi}_t$, the realized value of the $t$th logistic random variable. (Note that this construction is made possible by Assumption~\ref{assumption:constant_basis_function}.) By constructing $\bb$ in this way, we obtain that 
\begin{align*}
& \bar{\xi}_t < \tilde{\bb}_t \bullet \Phi(\xb(\omega,t)) \\
&  \Leftrightarrow \tilde{\bb}_t \bullet \Phi(\xb(\omega,t)) - \bar{\xi}_t > 0 \\
& \Leftrightarrow \bb_t \bullet \Phi(\xb(\omega,t)) > 0
\end{align*}
for each $\omega$ and $t$. We thus have that 
\begin{align*}
\hat{J}_R(\tilde{\bb}) & = \Exp \left[ \frac{1}{\Omega} \sum_{\omega = 1}^{\Omega} \sum_{t=1}^T  g(t, \xb(\omega,t)) \cdot   \prod_{t'=1}^{t-1} \Ibb\{ \xi_{t'} \geq \tilde{\bb}_{t'} \bullet \Phi( \xb(\omega,t')) \}   \cdot  \Ibb \{  \xi_t < \tilde{\bb}_t \bullet \Phi(\xb(\omega,t))  \} \right] \\
& \leq  \frac{1}{\Omega} \sum_{\omega = 1}^{\Omega} \sum_{t=1}^T  g(t, \xb(\omega,t)) \cdot   \prod_{t'=1}^{t-1} \Ibb\{ \bar{\xi}_{t'} \geq \tilde{\bb}_{t'} \bullet \Phi( \xb(\omega,t')) \}   \cdot  \Ibb \{  \bar{\xi}_t < \tilde{\bb}_t \bullet \Phi(\xb(\omega,t))  \} \\
& = \frac{1}{\Omega} \sum_{\omega = 1}^{\Omega} \sum_{t=1}^T  g(t, \xb(\omega,t)) \cdot   \prod_{t'=1}^{t-1} \Ibb\{ \bb_{t'} \bullet \Phi( \xb(\omega,t')) \leq 0 \}   \cdot  \Ibb \{  \bb_t \bullet \Phi(\xb(\omega,t)) >0  \} \\
& = \hat{J}_D(\bb)
\end{align*}
As a result, the reward of a randomized policy weight vector $\tilde{\bb}$ can be bounded by the reward of a deterministic policy weight vector $\bb$. Thus, $\sup_{\bb \in \Bcal} \hat{J}_D(\bb)$ is a valid upper bound on $\hat{J}_R(\tilde{\bb})$ for any $\tilde{\bb} \in \Bcal$. By the definition of the supremum as the least upper bound, we consequently have
\begin{equation}
\sup_{\tilde{\bb} \in \tilde{\Bcal}} \hat{J}_R(\tilde{\bb}) \leq \sup_{\bb \in \Bcal} \hat{J}_D(\bb).
\end{equation}

Since we have shown both inequalities, it follows $\sup_{\tilde{\bb} \in \tilde{\Bcal}} \hat{J}_R(\tilde{\bb}) = \sup_{\bb \in \Bcal} \hat{J}_D(\bb)$, as required. \Halmos

\subsection{Proof of Theorem~\ref{theorem:deterministic_randomized_full_equal}}
\label{proof_theorem:deterministic_randomized_full_equal}
	
We prove this in two steps: first, by showing that $\sup_{\bb \in \Bcal} J_D(\bb) \leq \sup_{\tilde{\bb} \in \tilde{\Bcal}} J_R(\tilde{\bb})$, and then by showing that  $\sup_{\bb \in \Bcal} J_D(\bb) \geq \sup_{\tilde{\bb} \in \tilde{\Bcal}} J_R(\tilde{\bb})$. 
	
\emph{Step 1: $\sup_{\bb \in \Bcal} J_D(\bb) \leq \sup_{\tilde{\bb} \in \tilde{\Bcal}} J_R(\tilde{\bb})$}. Let $\bb \in \Bcal$. Let $\alpha > 0$ be a constant, and define $\tilde{\bb}$ as follows:
\begin{equation*}
	\tilde{\bb}_t = \left\{ \begin{array}{ll} 
	\alpha \bb_t & \text{if}\ \bb_t \neq \zerob, \\
	-\alpha\eb_1 & \text{if}\ \bb_t = \zerob, \end{array} \right.
\end{equation*}
	where $\zerob$ is a $K$-dimensional vector of zeros and $\eb_1 = (1,0,\dots,0)$ is the first standard basis vector for $\mathbb{R}^K$. 
	
	Let $I = \{ t \in [T] \mid \bb_t \neq \zerob \}$, and for each $t \in I$, define the set $Q_t$ as 
	\begin{equation}
	Q_t = \{ (y_2,\dots, y_K) \in \mathbb{R}^{K-1} \mid b_{t,1} + \sum_{k=2}^K y_{k} b_{t,k} = 0 \}.
	\end{equation}
	Observe that $Q_t$ is a hyperplane in $\mathbb{R}^{K-1}$, so by Assumption~\ref{assumption:measure_zero}, we have that 
	\begin{equation}
	\Pbb( \Phi_{2:K}(\xb(t)) \in Q_t) = 0.
	\end{equation}
	We note that the event $\Phi_{2:K}(\xb(t)) \in Q_t$ is exactly the event that the inner product of $\bb_t$ and $\Phi(\xb(t))$ is equal to zero (i.e., we are on the boundary between choosing to stop or to continue): in particular, we have that 
	\begin{align*}
	\Phi_{2:K}(\xb(t)) \in Q_t \\
	\Leftrightarrow b_{t,1} + \sum_{k=2}^K \phi_k ( \xb(t)) b_{t,k} = 0 \\
	\Leftrightarrow \sum_{k=1}^K \phi_k(\xb(t)) b_{t,k} = 0 \\
	\Leftrightarrow \bb_t \bullet \Phi( \xb(t)) = 0
	\end{align*}
	where the third step follows because $\phi_1(\xb) = 1$ for all $\xb \in \Xcal$ (this is Assumption~\ref{assumption:constant_basis_function}). 
	
	Let $E$ be the event defined as 
	\begin{equation}
	E = \bigcup_{t \in I} \{ \Phi_{2:K} (\xb(t)) \in Q_t \}.
	\end{equation}
	Observe that $\Pbb(E) = 0$ since
	\begin{align*}
	\Pbb( E ) & = \Pbb \left( \bigcup_{t \in I} \{ \Phi_{2:K} (\xb(t)) \in Q_t \} \right) \\
	& \leq \sum_{t \in I} \Pbb( \Phi_{2:K} (\xb(t)) \in Q_t ) \\
	& = 0,
	\end{align*}
	where the inequality follows by the countable subadditivity of $\Pbb$.
	
	Observe also that for any $(\xb(1), \dots, \xb(T)) \notin E$, we have the following behavior: if $\bb_t \neq \zerob$, then 
\begin{align*}
	& \lim_{\alpha \to +\infty} \sigma( \tilde{\bb}_t \bullet \Phi(\xb(t))) \\
	& = \lim_{\alpha \to +\infty} \sigma( \alpha \bb_t \bullet \Phi(\xb(t))) \\
	& = \left\{ \begin{array}{ll} 1 & \text{if}\ \bb_t \bullet \Phi(\xb(t)) > 0, \\
	0 & \text{if}\ \bb_t \bullet \Phi(\xb(t)) \leq 0, \end{array} \right. \\
	& = \Ibb\{ \bb_t \bullet \Phi(\xb(t)) > 0 \}.
\end{align*}
	Otherwise, if $\bb_t = \zerob$, then
	\begin{align*}
	& \lim_{\alpha \to +\infty} \sigma( \tilde{\bb}_t \bullet \Phi(\xb(t))) \\
	& = \lim_{\alpha \to +\infty} \sigma( - \alpha \eb_1 \bullet \Phi(\xb(t))) \\
	& = \lim_{\alpha \to +\infty} \sigma( -\alpha) \\
	& = 0 \\
	& = \Ibb\{ \bb_t \bullet \Phi(\xb(t)) > 0\}.
	\end{align*}
	Therefore, for any $(\xb(1),\dots, \xb(T)) \notin E$, we have
	\begin{align*}
	& \lim_{\alpha \to +\infty} \sum_{t=1}^T g(t,\xb(t)) \cdot \prod_{t'=1}^{t-1} (1 - \sigma(\tilde{\bb}_t \bullet \Phi(\xb(t')))) \cdot \sigma( \tilde{\bb}_t \bullet \Phi(\xb(t))) \\
	& = \sum_{t=1}^T g(t,\xb(t)) \cdot \prod_{t'=1}^{t-1} \Ibb\{ \bb_{t'} \bullet \Phi( \xb(t')) \leq 0\} \cdot \Ibb\{ \bb_t \bullet \Phi(\xb(t)) > 0\}.
	\end{align*}
	In addition, for all $(\xb(1),\dots, \xb(T))$, the term in the limit obeys the bound 
	\begin{align*}
	& \sum_{t=1}^T g(t,\xb(t)) \cdot \prod_{t'=1}^{t-1} (1 - \sigma(\tilde{\bb}_t \bullet \Phi(\xb(t')))) \cdot \sigma( \tilde{\bb}_t \bullet \Phi(\xb(t))) \\
	& \leq \sum_{t=1}^T g(t,\xb(t)) \\
	& \leq T \cdot \bar{G},
	\end{align*}
	where the first inequality holds because $0 \leq \sigma(u) \leq 1$ for any real $u$, and the second holds by Assumption~\ref{assumption:bounded_payoff}. 
	
	Therefore, by applying the bounded convergence theorem, we can assert that 
	\begin{align}
	& \lim_{\alpha \to +\infty} J_R(\tilde{\bb}) \nonumber \\ 
	& = \lim_{\alpha \to +\infty} \Exp \left[ \sum_{t=1}^T g(t,\xb(t)) \cdot \prod_{t'=1}^{t-1} (1 - \sigma(\tilde{\bb}_t \bullet \Phi(\xb(t')))) \cdot \sigma( \tilde{\bb}_t \bullet \Phi(\xb(t))) \right] \label{eq:bct_fn} \\
	& = \Exp \left[ \sum_{t=1}^T g(t,\xb(t)) \cdot \prod_{t'=1}^{t-1} \Ibb\{ \bb_{t'} \bullet \Phi( \xb(t')) \leq 0\} \cdot \Ibb\{ \bb_t \bullet \Phi(\xb(t)) > 0\}  \right] \label{eq:bct_f_limit} \\
	& = J_D(\bb). \nonumber
	\end{align}
Note that in our application of the bounded convergence theorem, we are using the fact that the functions of $(\xb(1), \dots, \xb(T))$ whose expectation defines $J_R(\tilde{\bb})$ in \eqref{eq:bct_fn} converge pointwise to the function of $(\xb(1),\dots, \xb(T))$ whose expectation defines $J_D(\bb)$ in \eqref{eq:bct_f_limit} almost everywhere with respect to the probability measure of $(\xb(1), \dots, \xb(T))$. (The only set of values of $(\xb(1), \dots, \xb(T))$ on which the pointwise convergence does not hold is $E$, for which we have already established that $\Pbb(E) = 0$.)
	
Thus, $\lim_{\alpha \to +\infty} J_R(\tilde{\bb}) = J_D(\bb)$. Since $J_R(\tilde{\bb}) \leq \sup_{\bb' \in \tilde{\Bcal} } J_R(\bb')$ by the definition of the supremum, it then follows that for any $\alpha > 0$, 
\begin{align*}
\lim_{\alpha \to +\infty} J_R(\tilde{\bb}) \leq \sup_{\bb' \in \tilde{\Bcal}} J_R(\bb'),
\end{align*}
which implies that 
\begin{align*}
J_D(\bb) \leq \sup_{\bb' \in \tilde{\Bcal}} J_R(\bb').
\end{align*}
Since $\bb$ was arbitrary, we thus have that 
\begin{align*}
\sup_{\bb \in \Bcal} J_D(\bb) \leq \sup_{\bb' \in \tilde{\Bcal}} J_R(\bb')
\end{align*}
as required. \\

\emph{Step 2: $\sup_{\bb \in \Bcal} J_D(\bb) \geq \sup_{\tilde{\bb} \in \tilde{\Bcal}} J_R(\tilde{\bb})$}. To show this, let $\tilde{\bb}$ be any set of random policy weights in $\tilde{\Bcal}$. As in the proof of Theorem~\ref{theorem:deterministic_randomized_SAA_equal}, let us define random variables $\xi_1,\dots, \xi_T$ that are i.i.d. standard logistic random variables, that is, for each $t \in [T]$, we have:
\begin{equation*}
\Pbb( \xi_t < s) = \sigma(s)
\end{equation*}
for all $s \in \mathbb{R}$. Then observe that for a fixed trajectory $\xb(1),\dots, \xb(T)$, we can write the reward of the randomized policy with weights $\tilde{\bb}$ as 
\begin{align}
& = \sum_{t=1}^T g(t, \xb(t)) \cdot \prod_{t'=1}^{t-1} (1 - \sigma( \tilde{\bb}_{t'} \bullet \Phi(\xb(t'))) ) \cdot \sigma( \tilde{\bb}_{t} \bullet \Phi(\xb(t))) \nonumber\\ 
& = \sum_{t=1}^T g(t, \xb(t)) \cdot \prod_{t'=1}^{t-1} \Pbb( \xi_{t'} \geq \tilde{\bb}_{t'} \bullet \Phi(
\xb(t'))) \cdot \Pbb( \xi_t < \tilde{\bb}_t \bullet \Phi(\xb(t))) \nonumber\\
& = \sum_{t=1}^T g(t, \xb(t)) \cdot \prod_{t'=1}^{t-1} \Exp[ \Ibb\{ \xi_{t'} \geq \tilde{\bb}_{t'} \bullet \Phi(
\xb(t'))\} ] \cdot \Exp[ \Ibb\{ \xi_t < \tilde{\bb}_t \bullet \Phi(\xb(t))\} ] \nonumber\\
& = \sum_{t=1}^T g(t, \xb(t)) \cdot \Exp_{\xi_1, \dots, \xi_T} \left[ \prod_{t'=1}^{t-1}  \Ibb\{ \xi_{t'} \geq \tilde{\bb}_{t'} \bullet \Phi(
\xb(t'))\}  \cdot \Ibb\{ \xi_t < \tilde{\bb}_t \bullet \Phi(\xb(t))\} \right] \nonumber\\
& = \Exp_{\xi_1,\dots,\xi_T} \left[ \sum_{t=1}^T g(t, \xb(t)) \cdot \prod_{t'=1}^{t-1}  \Ibb\{ \xi_{t'} \geq \tilde{\bb}_{t'} \bullet \Phi(
\xb(t'))\}  \cdot \Ibb\{ \xi_t < \tilde{\bb}_t \bullet \Phi(\xb(t))\} \right].  \label{eq:single_path_reward}
\end{align}
We thus have that 
\begin{align*}
J_R(\tilde{\bb}) & = \Exp_{ \xb(1),\dots,\xb(T)} \left[ \sum_{t=1}^T g(t, \xb(t)) \cdot \prod_{t'=1}^{t-1} (1 - \sigma( \tilde{\bb}_{t'} \bullet \Phi(\xb(t'))) ) \cdot \sigma( \tilde{\bb}_{t} \bullet \Phi(\xb(t))) \right] \\
& = \Exp_{ \xb(1),\dots,\xb(T)} \left[ \Exp_{\xi_1,\dots,\xi_T} \left[ \sum_{t=1}^T g(t, \xb(t)) \cdot \prod_{t'=1}^{t-1}  \Ibb\{ \xi_{t'} \geq \tilde{\bb}_{t'} \bullet \Phi(
\xb(t'))\}  \cdot \Ibb\{ \xi_t < \tilde{\bb}_t \bullet \Phi(\xb(t))\} \right] \right] \\
& = \Exp_{\xi_1,\dots,\xi_T} \left[ \Exp_{ \xb(1),\dots,\xb(T)} \left[ \sum_{t=1}^T g(t, \xb(t)) \cdot \prod_{t'=1}^{t-1}  \Ibb\{ \xi_{t'} \geq \tilde{\bb}_{t'} \bullet \Phi(
\xb(t'))\}  \cdot \Ibb\{ \xi_t < \tilde{\bb}_t \bullet \Phi(\xb(t))\} \right] \right] 
\end{align*}
where the interchange of expectations in the last step follows by Fubini's theorem, since the random variable~\eqref{eq:single_path_reward} is always nonnegative. 

By the definition of expected value, there must exist a realization $\xi'_1, \dots, \xi'_T$ such that
\begin{align*}
&  \Exp_{\xi_1,\dots,\xi_T} \left[ \Exp_{ \xb(1),\dots,\xb(T)} \left[ \sum_{t=1}^T g(t, \xb(t)) \cdot \prod_{t'=1}^{t-1}  \Ibb\{ \xi_{t'} \geq \tilde{\bb}_{t'} \bullet \Phi(
\xb(t'))\}  \cdot \Ibb\{ \xi_t < \tilde{\bb}_t \bullet \Phi(\xb(t))\} \right] \right] \\
& \leq  \Exp_{ \xb(1),\dots,\xb(T)} \left[ \sum_{t=1}^T g(t, \xb(t)) \cdot \prod_{t'=1}^{t-1}  \Ibb\{ \xi'_{t'} \geq \tilde{\bb}_{t'} \bullet \Phi(
\xb(t'))\}  \cdot \Ibb\{ \xi'_t < \tilde{\bb}_t \bullet \Phi(\xb(t))\} \right]. 
\end{align*}

Now, let us define a weight vector $\bb$ for the deterministic probelm as follows:
\begin{equation}
b_{t,k} = \left\{ \begin{array}{ll} \tilde{b}_{t,k} & \text{if}\ k \neq 1, \\
\tilde{b}_{t,1} - \xi'_t & \text{if}\ k = 1, \end{array} \right.
\end{equation}
where we recall that the index $k = 1$ corresponds to the constant basis function $\phi_1(\cdot) = 1$. Observe that by the manner in which we have defined $\bb$, we have that 
\begin{align*}
& \Ibb\{ \xi_t \geq \tilde{\bb}_t \bullet \Phi(\xb(t)) \} \\
& = \Ibb\{ 0 \geq \tilde{\bb}_t \bullet \Phi(\xb(t)) - \xi_t \} \\
& = \Ibb\{ 0 \geq \bb_t \bullet \Phi(\xb(t))  \}.
\end{align*}
Thus, we have that 
\begin{align*}
J_R(\tilde{\bb}) & \leq \Exp_{ \xb(1),\dots,\xb(T)} \left[ \sum_{t=1}^T g(t, \xb(t)) \cdot \prod_{t'=1}^{t-1}  \Ibb\{ \xi'_{t'} \geq \tilde{\bb}_{t'} \bullet \Phi(
\xb(t'))\}  \cdot \Ibb\{ \xi'_t < \tilde{\bb}_t \bullet \Phi(\xb(t))\} \right] \\
& = \Exp_{ \xb(1),\dots,\xb(T)} \left[ \sum_{t=1}^T g(t, \xb(t)) \cdot \prod_{t'=1}^{t-1}  \Ibb\{ \bb_{t'} \bullet \Phi(
\xb(t')) \leq 0 \}  \cdot \Ibb\{ \bb_t \bullet \Phi(\xb(t)) > 0 \} \right] \\
& = J_D(\bb) \\
& \leq \sup_{\bb' \in \Bcal} J_D(\bb').
\end{align*}
Since $\tilde{\bb}$ was arbitrary, this implies that $\sup_{\bb' \in \Bcal} J_D(\bb')$ is an upper bound on $J_R(\tilde{\bb})$ for all $\tilde{\bb} \in \tilde{\Bcal}$, and thus that 
\begin{equation}
\sup_{\tilde{\bb} \in \tilde{\Bcal}} J_R(\tilde{\bb}) \leq \sup_{\bb \in \Bcal} J_D(\bb),
\end{equation}
as required. \Halmos

\subsection{Proof of Theorem~\ref{theorem:SAA_uniform_convergence}}
\label{proof_theorem:SAA_uniform_convergence}

To establish this result, we will show that the functions $\hat{J}_R(\cdot)$ and $J_R(\cdot)$ are Lipschitz continuous, and use this together with the compactness of $\Bcal$ to establish uniform convergence of $\hat{J}_R(\cdot)$ to $J_R(\cdot)$. To establish that these two functions are Lipschitz continuous, we need three preliminary results. The first is a basic result that the product of bounded Lipschitz continuous functions is a Lipschitz continuous function. Note that for this result and all other results in this section of the ecompanion, Lipschitz continuity is understood with respect to the $L_1$ norm, i.e., $f(\bb)$ is said to be Lipschitz continuous if there exists an $L > 0$ such that $| f(\bb) - f(\bb')| \leq L \| \bb - \bb' \|_1$ for all $\bb, \bb'$. 

\begin{lemma}
	Suppose that $f, h: \Bcal \to \mathbb{R}$ are Lipschitz continuous functions with Lipschitz constants $L_f$, and $L_h$, respectively, and are also uniformly bounded by constants $K_f$ and $K_h$, i.e., $\sup_{\bb \in \Bcal} |f(\bb)| \leq K_f$, $\sup_{\bb \in \Bcal} |h(\bb)| \leq K_h$. Then the function $w: \Bcal \to \mathbb{R}$ defined as $w(\bb) = f(\bb) h(\bb)$ is also Lipschitz continuous with Lipschitz constant $L_w = K_f L_h + K_h L_f$. \label{lemma:product_of_lipschitz_is_lipschitz}
\end{lemma}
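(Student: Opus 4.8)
The plan is to prove this by the standard ``add and subtract a cross term'' argument, which reduces the estimate on the product to separate estimates on each factor. Fix arbitrary $\bb, \bb' \in \Bcal$. I would begin from $|w(\bb) - w(\bb')| = |f(\bb)h(\bb) - f(\bb')h(\bb')|$ and insert the hybrid term $f(\bb)h(\bb')$, writing the difference as
\begin{equation*}
f(\bb)h(\bb) - f(\bb')h(\bb') = f(\bb)\bigl(h(\bb) - h(\bb')\bigr) + h(\bb')\bigl(f(\bb) - f(\bb')\bigr).
\end{equation*}

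Next I would apply the triangle inequality to bound $|w(\bb)-w(\bb')|$ by $|f(\bb)|\cdot|h(\bb)-h(\bb')| + |h(\bb')|\cdot|f(\bb)-f(\bb')|$. Then I would invoke the two hypotheses in turn: the uniform bounds give $|f(\bb)| \leq K_f$ and $|h(\bb')| \leq K_h$, while the Lipschitz properties give $|h(\bb)-h(\bb')| \leq L_h \|\bb - \bb'\|_1$ and $|f(\bb)-f(\bb')| \leq L_f \|\bb - \bb'\|_1$. Combining these yields $|w(\bb)-w(\bb')| \leq (K_f L_h + K_h L_f)\|\bb - \bb'\|_1$, which is exactly the claimed Lipschitz bound with constant $L_w = K_f L_h + K_h L_f$. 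Since $\bb, \bb'$ were arbitrary, the result follows.

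There is no substantive obstacle here; the only thing to be slightly careful about is the choice of which factor to pair with each difference, so that the uniform bound $K_f$ multiplies the Lipschitz constant $L_h$ of the \emph{other} factor (and symmetrically for $K_h$ and $L_f$), matching the stated constant. The choice of inserting $f(\bb)h(\bb')$ rather than $f(\bb')h(\bb)$ is immaterial to the final constant, since both pairings produce the same symmetric bound $K_f L_h + K_h L_f$. I would also note that the argument is agnostic to the particular norm used, so stating it for the $L_1$ norm (as adopted throughout this section of the ecompanion) requires no modification.
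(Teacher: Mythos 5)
Your proof is correct and is essentially identical to the paper's: both insert the same cross term $f(\bb)h(\bb')$, apply the triangle inequality, and then combine the uniform bounds with the Lipschitz bounds to obtain the constant $K_f L_h + K_h L_f$. No differences worth noting.
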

\begin{proof}{Proof of Lemma~\ref{lemma:product_of_lipschitz_is_lipschitz}:}
Let $\bb, \bar{\bb} \in \Bcal$ and consider $|w(\bb) - w(\bar{\bb})|$: 
	\begin{align*}
	|w(\bb) - w(\bar{\bb})| & = |f(\bb)h(\bb) - f(\bar{\bb}) h(\bar{\bb})| \\
	& = |f(\bb)h(\bb) - f(\bb) h(\bar{\bb}) + f(\bb) h(\bar{\bb}) - f(\bar{\bb}) h(\bar{\bb})| \\
	& \leq |f(\bb)| \cdot |h(\bb) - h(\bar{\bb})| + |f(\bb)  - f(\bar{\bb})| \cdot |h(\bar{\bb})| \\
	& \leq K_f \cdot L_h \| \bb - \bar{\bb} \| + L_f \| \bb - \bar{\bb} \| \cdot K_h \\
	& = (K_f L_h + L_f K_h) \| \bb - \bar{\bb} \|,
	\end{align*}
	as required. \Halmos
\end{proof}

The second result that we will use is that the probabilities of stopping and continuing at time $t$ and at a state $\xb \in \Xcal$ in a randomized policy are Lipschitz continuous with respect to $\bb$. 

\begin{lemma}
	Suppose that Assumption~\ref{assumption:bounded_basis} holds. For any $t \in [T]$ and $\xb \in \Xcal$, the functions $f$ and $h$ defined as 
	\begin{align*}
	f(\bb) & = \sigma( \bb_t \bullet \Phi(\xb)), \\
	h(\bb) & = 1-\sigma( \bb_t \bullet \Phi(\xb)),
	\end{align*}
	are Lipschitz continuous with Lipschitz constant $Q$. \label{lemma:sigma_bb_bullet_phi_is_lipschitz}
\end{lemma}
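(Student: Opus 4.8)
The plan is to reduce the Lipschitz continuity of $f$ and $h$ in the weight vector $\bb$ to the Lipschitz continuity of the scalar logistic function $\sigma(\cdot)$, and then to control the linear argument $\bb_t \bullet \Phi(\xb)$ using Assumption~\ref{assumption:bounded_basis}. First I would recall that $\sigma$ is continuously differentiable with $\sigma'(u) = \sigma(u)(1 - \sigma(u))$, and that since $0 \le \sigma(u) \le 1$ for all $u \in \mathbb{R}$, we have $0 \le \sigma'(u) \le 1$. Hence $\sigma$ is $1$-Lipschitz as a function of its scalar argument, i.e. $|\sigma(u) - \sigma(u')| \le |u - u'|$ for all $u, u' \in \mathbb{R}$.

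Next I would observe that $f$ depends on $\bb$ only through its $t$th block $\bb_t$. For any $\bb, \bar{\bb} \in \Bcal$, writing $u = \bb_t \bullet \Phi(\xb)$ and $u' = \bar{\bb}_t \bullet \Phi(\xb)$ and applying the one-dimensional bound above gives $|f(\bb) - f(\bar{\bb})| \le |(\bb_t - \bar{\bb}_t) \bullet \Phi(\xb)|$. I would then bound this inner product by H\"older's inequality with the dual pair $(L_1, L_\infty)$, namely $|(\bb_t - \bar{\bb}_t) \bullet \Phi(\xb)| \le \| \bb_t - \bar{\bb}_t \|_1 \cdot \| \Phi(\xb) \|_\infty \le Q \, \| \bb_t - \bar{\bb}_t \|_1$, where the last inequality invokes Assumption~\ref{assumption:bounded_basis}. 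Since $\bb_t - \bar{\bb}_t$ is a subvector of $\bb - \bar{\bb}$, we have $\| \bb_t - \bar{\bb}_t \|_1 \le \| \bb - \bar{\bb} \|_1$, so that $|f(\bb) - f(\bar{\bb})| \le Q \, \| \bb - \bar{\bb} \|_1$, establishing that $f$ is Lipschitz with constant $Q$.

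Finally, for $h$ I would simply note that $h(\bb) = 1 - f(\bb)$, so $|h(\bb) - h(\bar{\bb})| = |f(\bb) - f(\bar{\bb})| \le Q \, \| \bb - \bar{\bb} \|_1$, yielding the same Lipschitz constant. I do not anticipate any genuine obstacle here, as the argument is a routine composition of a bounded-derivative scalar function with a linear map; the only points requiring care are (i) that the relevant norm throughout this part of the ecompanion is the $L_1$ norm, so the inner product must be controlled via its $L_\infty$ dual through Assumption~\ref{assumption:bounded_basis}, and (ii) that one may use the crude bound $\sigma' \le 1$ rather than the sharper $\sigma' \le 1/4$, since Lemma~\ref{lemma:sigma_bb_bullet_phi_is_lipschitz} only asserts the constant $Q$ (the sharper bound would give $Q/4$, which of course also suffices).
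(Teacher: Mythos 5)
Your proof is correct and takes essentially the same approach as the paper: both arguments reduce to the fact that $|\sigma'|\leq 1$ and then pair the $L_1$ norm on $\bb$ with the $L_\infty$ bound $\|\Phi(\xb)\|_\infty \leq Q$ via H\"older's inequality. The only cosmetic difference is that you apply the scalar $1$-Lipschitzness of $\sigma$ before handling the linear map, whereas the paper applies the multivariate mean value theorem to $f$ directly and bounds $\|\nabla f\|_\infty$; the substance is identical, and your treatment of $h = 1 - f$ matches the paper's as well.
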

\begin{proof}{Proof of Lemma~\ref{lemma:sigma_bb_bullet_phi_is_lipschitz}:}
	Observe that for $f$, the gradient of $f$ satisfies 
	\begin{align*}
	\nabla_{\bb_t} f(\bb) & = \Phi(\xb) \sigma( \bb_t \bullet \Phi(\xb)), \\
	\nabla_{\bb_{t'}} f(\bb) & = 0, \quad \forall t' \neq t. 
	\end{align*}
	Therefore, by Assumption~\ref{assumption:bounded_basis},
	\begin{equation*}
	\| \nabla f(\bb) \|_{\infty} = \| \nabla_{\bb_t} f(\bb) \|_{\infty} \leq \| \Phi(\xb) \|_{\infty} \leq Q. 
	\end{equation*}
	Now, consider $\bb$ and $\bar{\bb}$ in $\Bcal$. Since $f$ is a differentiable function, it follows by the mean value theorem that there exists a $\bb' \in \mathbb{R}^{KT}$ such that 
	\begin{equation}
	f(\bb) - f(\bar{\bb}) = \nabla f(\bb')^T (\bb - \bar{\bb}).
	\end{equation}
	We thus have
	\begin{align}
	| f(\bb) - f(\bar{\bb}) | & = | \nabla f(\bb')^T (\bb - \bar{\bb}) | \\
	& \leq \| \nabla f(\bb') \|_{\infty} \| \bb - \bar{\bb} \|_1 \\
	& \leq Q \| \bb - \bar{\bb} \|_1,
	\end{align}
	where the first inequality follows by the Cauchy-Schwartz inequality, and the second by our earlier result that the norm of the gradient of $f$ is bounded everywhere by $Q$. Thus, $f$ is Lipschitz continuous with constant $Q$. The proof for $h$ follows by an almost identical argument. \Halmos
\end{proof}

\begin{lemma}
	Suppose Assumption~\ref{assumption:bounded_basis} holds. Fix any $ (\xb(1), \dots, \xb(T)) \in \Xcal^T$, and any $t \in [T]$. The function $H_t(\cdot)$ defined as 
	\begin{equation*}
	H_t(\bb) = \prod_{t'=1}^t (1 - \sigma(b_{t'} \bullet \Phi(\xb(t'))))
	\end{equation*}
	is Lipschitz continuous with constant $tQ$. \label{lemma:Ht_is_lipschitz_continuous}
\end{lemma}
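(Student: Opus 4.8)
The plan is to prove the claim by induction on $t$, using Lemmas~\ref{lemma:product_of_lipschitz_is_lipschitz} and \ref{lemma:sigma_bb_bullet_phi_is_lipschitz} as the two building blocks. The key structural observation is the recursion $H_t(\bb) = H_{t-1}(\bb) \cdot (1 - \sigma(\bb_t \bullet \Phi(\xb(t))))$, which writes $H_t$ as a product of two functions whose Lipschitz constants and uniform bounds we already control. All Lipschitz constants here are understood with respect to the $L_1$ norm, consistent with the convention stated just before Lemma~\ref{lemma:product_of_lipschitz_is_lipschitz}.

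For the base case $t = 1$, I would simply note that $H_1(\bb) = 1 - \sigma(\bb_1 \bullet \Phi(\xb(1)))$, which by Lemma~\ref{lemma:sigma_bb_bullet_phi_is_lipschitz} is Lipschitz continuous with constant $Q = 1 \cdot Q$, matching the claimed form $tQ$ at $t = 1$.

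For the inductive step, I would assume that $H_{t-1}$ is Lipschitz continuous with constant $(t-1)Q$ and then apply Lemma~\ref{lemma:product_of_lipschitz_is_lipschitz} to the factorization $H_t = H_{t-1} \cdot h$, where $h(\bb) = 1 - \sigma(\bb_t \bullet \Phi(\xb(t)))$. Before invoking that lemma I must verify its boundedness hypotheses: since each factor $1 - \sigma(\cdot)$ takes values in $[0,1]$, the product $H_{t-1}(\bb)$ is uniformly bounded by $K_f = 1$, and the single factor $h(\bb)$ is likewise bounded by $K_h = 1$. Combining these bounds with $L_f = (t-1)Q$ from the inductive hypothesis and $L_h = Q$ from Lemma~\ref{lemma:sigma_bb_bullet_phi_is_lipschitz}, Lemma~\ref{lemma:product_of_lipschitz_is_lipschitz} gives a Lipschitz constant for $H_t$ equal to $K_f L_h + K_h L_f = 1 \cdot Q + 1 \cdot (t-1)Q = tQ$, which closes the induction.

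The main obstacle, such as it is, is purely one of bookkeeping rather than genuine analysis: one must confirm at each step that both the boundedness and the Lipschitz hypotheses of Lemma~\ref{lemma:product_of_lipschitz_is_lipschitz} are in force, and track the constants so that they telescope to exactly $tQ$ rather than, say, a constant that grows multiplicatively in $t$. The reason the constant stays linear in $t$ (and not exponential, which a naive product bound might suggest) is precisely that every factor is bounded by $1$, so the $K_f, K_h$ terms in Lemma~\ref{lemma:product_of_lipschitz_is_lipschitz} never amplify the accumulated Lipschitz constant.
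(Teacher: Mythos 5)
Your proof is correct and follows essentially the same route as the paper's: induction on $t$ with base case from Lemma~\ref{lemma:sigma_bb_bullet_phi_is_lipschitz}, and the inductive step via the factorization $H_t = H_{t-1}\cdot(1-\sigma(\bb_t \bullet \Phi(\xb(t))))$ together with Lemma~\ref{lemma:product_of_lipschitz_is_lipschitz}, with both factors bounded by $1$ so the constants add to $(t-1)Q\cdot 1 + Q\cdot 1 = tQ$. The bookkeeping of the boundedness hypotheses and the remark about why the constant grows additively rather than multiplicatively match the paper's reasoning exactly.
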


\begin{proof}{Proof of Lemma~\ref{lemma:Ht_is_lipschitz_continuous}:}
	We will prove this by induction on $t$. The base case is when $t = 1$. In this case, $H_1(\bb) = 1 - \sigma(\bb_1 \bullet \Phi(\xb(1)))$. By Lemma~\ref{lemma:sigma_bb_bullet_phi_is_lipschitz}, this function is Lipschitz continuous with constant $Q$, as required. 
	
	To establish the claim for $t \geq 2$, suppose that $H_{t-1}(\cdot)$ is Lipschitz continuous with constant $(t-1)Q$. We now need to establish that $H_{t}(\cdot)$ is Lipschitz continuous with constant $tQ$. 
	
	To see this, observe that we can write $H_t(\bb) = H_{t-1}(\bb) \cdot (1 - \sigma(\bb_t \bullet \Phi(\xb(t))))$. The function $H_{t-1}(\cdot)$ and the function $h(\bb) = 1 - \sigma( \bb_t \bullet \Phi(\xb(t)))$ are both bounded in absolute value by 1. Additionally, by Lemma~\ref{lemma:sigma_bb_bullet_phi_is_lipschitz}, the function $h(\cdot)$ is Lipschitz continuous with constant $Q$. Together with the induction hypothesis that $H_{t-1}(\cdot)$ is Lipschitz continuous with constant $(t-1)Q$, we can invoke Lemma~\ref{lemma:product_of_lipschitz_is_lipschitz} to assert that $H_t(\cdot)$ is Lipschitz continuous with constant $(t-1)Q \cdot 1 + Q \cdot 1 = tQ$. \Halmos
\end{proof}

\begin{lemma}
	Suppose Assumption~\ref{assumption:bounded_basis} holds. The function $\hat{J}_R(\cdot)$ is Lipschitz continuous with Lipschitz constant $L = \bar{G} T^2 Q$.  \label{lemma:Jhat_lipschitz_continuous}
\end{lemma}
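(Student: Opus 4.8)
The plan is to decompose $\hat{J}_R(\bb)$ into a scaled average, over trajectories and periods, of products of functions that the preceding lemmas have already shown to be bounded and Lipschitz, and then to accumulate the Lipschitz constants using the fact that sums and nonnegative scalar multiples of Lipschitz functions remain Lipschitz. Concretely, for a fixed trajectory $\omega$ write $H^{\omega}_{t-1}(\bb) = \prod_{t'=1}^{t-1}(1 - \sigma(\bb_{t'} \bullet \Phi(\xb(\omega,t'))))$, so that
\[
\hat{J}_R(\bb) = \frac{1}{\Omega}\sum_{\omega=1}^{\Omega}\sum_{t=1}^T g(t,\xb(\omega,t))\, H^{\omega}_{t-1}(\bb)\, \sigma(\bb_t \bullet \Phi(\xb(\omega,t))).
\]
It therefore suffices to bound the Lipschitz constant of a single summand and then combine.

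First I would fix $\omega$ and $t$ and treat the product $H^{\omega}_{t-1}(\bb)\,\sigma(\bb_t \bullet \Phi(\xb(\omega,t)))$, applying Lemma~\ref{lemma:Ht_is_lipschitz_continuous} to the trajectory $\xb(\omega,1),\dots,\xb(\omega,T)$. That lemma gives that $H^{\omega}_{t-1}$ is Lipschitz with constant $(t-1)Q$ and bounded in absolute value by $1$; Lemma~\ref{lemma:sigma_bb_bullet_phi_is_lipschitz} gives that $\sigma(\bb_t \bullet \Phi(\xb(\omega,t)))$ is Lipschitz with constant $Q$ and bounded by $1$. Invoking Lemma~\ref{lemma:product_of_lipschitz_is_lipschitz} with $K_f = K_h = 1$, $L_f = (t-1)Q$, $L_h = Q$ then shows their product is Lipschitz with constant $1\cdot Q + 1\cdot (t-1)Q = tQ$. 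Since $g(t,\xb(\omega,t))$ is a fixed constant independent of $\bb$ with $0 \le g(t,\xb(\omega,t)) \le \bar{G}$ by Assumption~\ref{assumption:bounded_payoff}, multiplying by it scales the Lipschitz constant, yielding the bound $\bar{G}tQ$ for the summand. (The case $t=1$ is handled by the convention that the empty product $H^{\omega}_0 \equiv 1$ is Lipschitz with constant $0$.)

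Next I would sum over $t$ and average over $\omega$. Because the sum of Lipschitz functions is Lipschitz with constant equal to the sum of the individual constants, the inner sum $\sum_{t=1}^T$ is Lipschitz with constant $\sum_{t=1}^T \bar{G}tQ \le \bar{G}Q\sum_{t=1}^T T = \bar{G}T^2 Q$, where each $t$ is bounded crudely by $T$. Averaging over the $\Omega$ trajectories is a convex combination of $\Omega$ functions each Lipschitz with constant at most $\bar{G}T^2 Q$, which preserves the bound, so $\hat{J}_R$ is Lipschitz with constant $\bar{G}T^2 Q$, as claimed.

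There is no genuine obstacle here; the argument is pure bookkeeping once Lemmas~\ref{lemma:product_of_lipschitz_is_lipschitz}--\ref{lemma:Ht_is_lipschitz_continuous} are in hand. The only point needing mild care is that the exact summation $\sum_{t=1}^T t = T(T+1)/2$ would give the sharper constant $\bar{G}Q\,T(T+1)/2$; the stated constant $\bar{G}T^2 Q$ follows immediately from the adequate bound $t \le T$, and this looser form is presumably preferred for cleaner downstream use in the proof of Theorem~\ref{theorem:SAA_uniform_convergence}.
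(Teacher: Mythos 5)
Your proposal is correct and follows essentially the same route as the paper's proof: both apply Lemmas~\ref{lemma:sigma_bb_bullet_phi_is_lipschitz}, \ref{lemma:Ht_is_lipschitz_continuous} and \ref{lemma:product_of_lipschitz_is_lipschitz} to bound the per-$(\omega,t)$ summand's Lipschitz constant by $\bar{G}tQ$, then use the crude bound $t \leq T$ and sum over the $T$ periods and average over the $\Omega$ trajectories to obtain $\bar{G}T^2Q$. The only difference is presentational (the paper writes a single chain of inequalities starting from $|\hat{J}_R(\bb)-\hat{J}_R(\bar{\bb})|$ via the triangle inequality, while you phrase it as closure of Lipschitz functions under sums and convex combinations), and your observation that the sharper constant $\bar{G}Q\,T(T+1)/2$ is available applies equally to both.
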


\begin{proof}{Proof of Lemma~\ref{lemma:Jhat_lipschitz_continuous}:}
	Let $\bb, \bar{\bb} \in \Bcal$. We have
	\begin{align*}
	| \hat{J}_R(\bb) - \hat{J}_R(\bar{\bb})| 
	&= | \frac{1}{\Omega} \sum_{\omega=1}^{\Omega} \sum_{t=1}^T g(t, \xb(\omega,t)) \prod_{t'=1}^{t-1} (1 - \sigma(\bb_{t'} \bullet \Phi(\xb(\omega,t')))) \sigma(\bb_t \bullet \Phi(\xb(\omega,t))) \\&\qquad - 
	\frac{1}{\Omega} \sum_{\omega=1}^{\Omega} \sum_{t=1}^T g(t, \xb(\omega,t)) \prod_{t'=1}^{t-1} (1 - \sigma(\bar{\bb}_{t'} \bullet \Phi(\xb(\omega,t')))) \sigma(\bar{\bb}_t \bullet \Phi(\xb(\omega,t))) |  \\
	& \leq \frac{1}{\Omega} \sum_{\omega = 1}^\Omega \sum_{t=1}^T g(t, \xb(\omega,t)) | \prod_{t'=1}^{t-1} (1 - \sigma(\bb_{t'} \bullet \Phi(\xb(\omega,t')))) \sigma(\bb_t \bullet \Phi(\xb(\omega,t)))  \\&\qquad - \prod_{t'=1}^{t-1} (1 - \sigma(\bar{\bb}_{t'} \bullet \Phi(\xb(\omega,t')))) \sigma(\bar{\bb}_t \bullet \Phi(\xb(\omega,t)))  | \\
	& \leq \frac{1}{\Omega} \sum_{\omega=1}^{\Omega} \sum_{t=1}^T g(t, \xb(\omega,t)) tQ \| \bb - \bar{\bb} \|_{1} \\
	& \leq \frac{1}{\Omega} \sum_{\omega=1}^{\Omega} \sum_{t=1}^T \bar{G} T Q \| \bb - \bar{\bb} \|_{1} \\
	& = \frac{1}{\Omega} \cdot \Omega \cdot T \cdot \bar{G} T Q \| \bb - \bar{\bb} \|_{1} \\
	& = \bar{G} T^2 Q \| \bb - \bar{\bb} \|_{1} \\
	\end{align*}
	where the first inequality is just the triangle inequality; the second inequality follows by applying Lemmas~\ref{lemma:Ht_is_lipschitz_continuous}, \ref{lemma:sigma_bb_bullet_phi_is_lipschitz} and \ref{lemma:product_of_lipschitz_is_lipschitz} together; and the remaining steps follow by algebra and using the definition of $\bar{G}$ as a universal upper bound on $g(t,\xb)$ (Assumption~\ref{assumption:bounded_payoff}). \Halmos
\end{proof}

\begin{lemma}
	The function $J_R(\cdot)$ is Lipschitz continuous with Lipschitz constant $L = \bar{G} T^2 Q$. \label{lemma:J_lipschitz_continuous}
\end{lemma}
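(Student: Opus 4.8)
The plan is to mirror the proof of Lemma~\ref{lemma:Jhat_lipschitz_continuous} almost verbatim, with the sample average $\frac{1}{\Omega}\sum_{\omega=1}^\Omega$ replaced by the expectation over the stochastic process $\{\xb(t)\}_{t=1}^T$. The key realization is that $J_R(\bb)$ is precisely the expectation of a per-realization reward whose Lipschitz behavior in $\bb$ has already been controlled, uniformly over realizations, by the same chain of lemmas used for $\hat{J}_R(\cdot)$.

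First I would fix an arbitrary realization $(\xb(1),\dots,\xb(T)) \in \Xcal^T$ and define the per-realization reward $R(\bb) = \sum_{t=1}^T g(t,\xb(t)) \prod_{t'=1}^{t-1}(1 - \sigma(\bb_{t'} \bullet \Phi(\xb(t')))) \cdot \sigma(\bb_t \bullet \Phi(\xb(t)))$. Exactly as in the inequalities establishing Lemma~\ref{lemma:Jhat_lipschitz_continuous} --- invoking Lemma~\ref{lemma:Ht_is_lipschitz_continuous} for the product of continuation probabilities, Lemma~\ref{lemma:sigma_bb_bullet_phi_is_lipschitz} for the stopping probability in period $t$, and Lemma~\ref{lemma:product_of_lipschitz_is_lipschitz} to combine them --- each summand is Lipschitz in $\bb$ with constant $g(t,\xb(t)) \cdot tQ \leq \bar{G} T Q$, so summing over the $T$ periods yields $|R(\bb) - R(\bar{\bb})| \leq \bar{G} T^2 Q \, \| \bb - \bar{\bb}\|_1$. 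Crucially, this bound is uniform over all realizations, because the Lipschitz constants furnished by the three lemmas depend only on $Q$, $\bar{G}$ and $T$ (via Assumptions~\ref{assumption:bounded_payoff} and \ref{assumption:bounded_basis}) and not on the particular states $\xb(1),\dots,\xb(T)$.

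Next I would take expectations. Since $J_R(\bb) = \Exp[R(\bb)]$, linearity of expectation together with $|\Exp[X]| \leq \Exp[|X|]$ gives
\begin{align*}
|J_R(\bb) - J_R(\bar{\bb})| = |\Exp[R(\bb) - R(\bar{\bb})]| \leq \Exp[\,|R(\bb) - R(\bar{\bb})|\,] \leq \Exp[\bar{G} T^2 Q \, \|\bb - \bar{\bb}\|_1] = \bar{G} T^2 Q \, \|\bb - \bar{\bb}\|_1,
\end{align*}
which is exactly the claimed Lipschitz property with constant $L = \bar{G} T^2 Q$.

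I do not anticipate a genuine obstacle here: the one point requiring care is that the per-realization Lipschitz constant obtained from Lemmas~\ref{lemma:Ht_is_lipschitz_continuous}--\ref{lemma:product_of_lipschitz_is_lipschitz} is independent of the realization, which is what legitimizes pulling the constant $\bar{G} T^2 Q$ outside the expectation in the final step. Everything else is an elementary application of the triangle inequality and basic properties of the expectation, so this lemma is essentially the expectation-analogue of the already-proved SAA case.
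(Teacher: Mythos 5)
Your proposal is correct and follows essentially the same route as the paper: the paper likewise moves the absolute value inside the expectation via the triangle inequality, bounds the per-realization difference of each summand by $\bar{G}\, t Q \,\|\bb - \bar{\bb}\|_1 \leq \bar{G} T Q \,\|\bb-\bar{\bb}\|_1$ using Lemmas~\ref{lemma:product_of_lipschitz_is_lipschitz}--\ref{lemma:Ht_is_lipschitz_continuous}, and sums over $T$ periods to pull out the constant $\bar{G}T^2Q$. Your explicit remark that the per-realization Lipschitz constant is uniform over realizations is exactly the (implicit) justification in the paper's argument, so there is no gap.
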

\begin{proof}{Proof of Lemma~\ref{lemma:J_lipschitz_continuous}:}
	Let $\bb, \bar{\bb} \in \Bcal$. Using similar logic as the proof of Lemma~\ref{lemma:Jhat_lipschitz_continuous}, we have
	\begin{align*}
	&| J_R(\bb) - J_R(\bar{\bb}) |\\
	& = \left| \Exp \left[ \sum_{t=1}^T g(t, \xb(t)) \prod_{t'=1}^{t-1} (1 - \sigma(\bb_{t'} \bullet \Phi(\xb(t')))) \sigma(\bb_t \bullet \Phi(\xb(t)))\right] \right. \\
	& \left. \qquad- 
	\Exp \left[ \sum_{t=1}^T g(t, \xb(t)) \prod_{t'=1}^{t-1} (1 - \sigma(\bar{\bb}_{t'} \bullet \Phi(\xb(t')))) \sigma(\bar{\bb}_t \bullet \Phi(\xb(t))) \right] \right| \\
	& \leq \Exp\left[ \sum_{t=1}^T g(t, \xb(t))  \cdot \left| \prod_{t'=1}^{t-1} (1 - \sigma(\bb_{t'} \bullet \Phi(\xb(t')))) \sigma(\bb_t \bullet \Phi(\xb(t))) - \prod_{t'=1}^{t-1} (1 - \sigma(\bar{\bb}_{t'} \bullet \Phi(\xb(t')))) \sigma(\bar{\bb}_t \bullet \Phi(\xb(t)))  \right|  \right]\\
	& \leq \Exp \left[ \sum_{t=1}^T \bar{G} T Q \| \bb - \bar{\bb} \|_{1}  \right] \\
	& = \bar{G} T^2 Q \| \bb - \bar{\bb} \|_{1},
	\end{align*}
	as required. \Halmos
\end{proof}

With Lemma~\ref{lemma:Jhat_lipschitz_continuous} and \ref{lemma:J_lipschitz_continuous}, we can prove the following theorem, which will be the final stepping stone to Theorem~\ref{theorem:SAA_uniform_convergence}. 

\begin{theorem}
Suppose that Assumptions~\ref{assumption:bounded_basis} and \ref{assumption:Bcal_compact} both hold. Fix any $\epsilon>0$. With probability one, there exists a finite sample size $N$ such that for all $\Omega \geq N$,
\begin{equation}
\sup_{\bb \in \Bcal}\  | J_R(\bb) - \hat{J}_R(\bb) | \leq \epsilon.
\end{equation}
\label{theorem:SAA_uniform_convergence_fixed_epsilon}
\end{theorem}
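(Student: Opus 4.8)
The plan is to establish this result as a uniform law of large numbers via a standard finite-covering argument, combining the pointwise convergence afforded by the strong law of large numbers (SLLN) with the Lipschitz continuity of $\hat{J}_R(\cdot)$ and $J_R(\cdot)$ from Lemmas~\ref{lemma:Jhat_lipschitz_continuous} and~\ref{lemma:J_lipschitz_continuous}, together with the compactness of $\Bcal$ from Assumption~\ref{assumption:Bcal_compact}. Both functions share the same Lipschitz constant $L = \bar{G} T^2 Q$ with respect to the $L_1$ norm, and this common bound is what will allow us to control the approximation error uniformly over $\Bcal$.

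First I would observe that for each fixed $\bb \in \Bcal$, the SAA objective is an average of i.i.d.\ terms,
\begin{equation*}
\hat{J}_R(\bb) = \frac{1}{\Omega} \sum_{\omega=1}^{\Omega} Z_{\omega}(\bb), \quad Z_{\omega}(\bb) = \sum_{t=1}^T g(t, \xb(\omega,t)) \prod_{t'=1}^{t-1} \left(1 - \sigma(\bb_{t'} \bullet \Phi(\xb(\omega,t'))) \right) \sigma(\bb_t \bullet \Phi(\xb(\omega,t))),
\end{equation*}
where each $Z_{\omega}(\bb)$ is bounded in absolute value by $\bar{G}$ (by Assumption~\ref{assumption:bounded_payoff}, since the per-period stopping probabilities sum to at most one) and has expectation exactly $J_R(\bb)$. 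Hence the SLLN yields $\hat{J}_R(\bb) \to J_R(\bb)$ almost surely for each fixed $\bb$.

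Next, using Assumption~\ref{assumption:Bcal_compact}, I would set $\delta = \epsilon/(3L)$ and extract from the open cover $\{ \bb : \| \bb - \bb' \|_1 < \delta \}_{\bb' \in \Bcal}$ a finite subcover, with centers $\bb^{(1)}, \dots, \bb^{(M)}$. Because this collection is finite, the almost-sure convergence at each center may be intersected: with probability one there is a finite $N$ such that $|\hat{J}_R(\bb^{(j)}) - J_R(\bb^{(j)})| \leq \epsilon/3$ holds simultaneously for all $j \in [M]$ and all $\Omega \geq N$. Finally, for an arbitrary $\bb \in \Bcal$, choosing a center $\bb^{(j)}$ with $\| \bb - \bb^{(j)} \|_1 < \delta$ and applying the triangle inequality,
\begin{equation*}
| \hat{J}_R(\bb) - J_R(\bb) | \leq |\hat{J}_R(\bb) - \hat{J}_R(\bb^{(j)})| + |\hat{J}_R(\bb^{(j)}) - J_R(\bb^{(j)})| + |J_R(\bb^{(j)}) - J_R(\bb)|,
\end{equation*}
the outer two terms are each at most $L\delta = \epsilon/3$ by the Lipschitz bounds, and the middle term is at most $\epsilon/3$ for $\Omega \geq N$, so that $\sup_{\bb \in \Bcal} |\hat{J}_R(\bb) - J_R(\bb)| \leq \epsilon$.

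The main obstacle is the careful handling of the quantifiers. The SLLN only delivers convergence on a probability-one event that depends on the point $\bb$, and there are uncountably many such points, so one cannot naively intersect over all of $\Bcal$. The essential role of compactness together with Lipschitz continuity is precisely to reduce the problem to \emph{finitely} many points --- the finite subcover --- so that a finite intersection of probability-one events remains probability one, on which a single finite $N$ works for every center; the Lipschitz bound then transfers control from the centers to every other point of $\Bcal$ at a uniform cost of $2L\delta$. Everything else is routine bookkeeping, so I would keep the SLLN application and the triangle-inequality estimate brief.
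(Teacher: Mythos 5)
Your proposal is correct and follows essentially the same argument as the paper's proof: a finite $\delta$-net of the compact set $\Bcal$ with $\delta = \epsilon/(3L)$, the strong law of large numbers applied at the finitely many net centers, and the shared Lipschitz constant $L = \bar{G}T^2Q$ from Lemmas~\ref{lemma:Jhat_lipschitz_continuous} and~\ref{lemma:J_lipschitz_continuous} to transfer the bound from the centers to all of $\Bcal$ via the same three-term triangle inequality. Your explicit remark on why the pointwise SLLN events cannot be intersected over uncountably many points, and the boundedness check on the per-trajectory rewards, are both sound and consistent with what the paper does implicitly.
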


\begin{proof}{Proof of Theorem~\ref{theorem:SAA_uniform_convergence_fixed_epsilon}:}
	
	For the given $\epsilon$, set $\delta = \epsilon / (3L)$ where $L = \bar{G} T^2 Q$ is the Lipschitz constant of both $\hat{J}_R(\cdot)$ and $J_R(\cdot)$. Since $\Bcal$ is compact (Assumption~\ref{assumption:Bcal_compact}), there exist finitely many points $\bb^1, \dots, \bb^M$ such that $\Bcal \subseteq \bigcup_{m=1}^M B(\bb^m, \delta)$, where $B(\bb, r) = \{ \bb' \in \Bcal \mid \| \bb' - \bb \|_1 < r\}$ is the open ball of radius $r$ in the $L_1$ norm. 
	
	For each point $\bb^m$, the strong law of large numbers guarantees that $\hat{J}_R(\bb^m)$ converges to $J_R(\bb^m)$ almost surely. Thus, almost surely, there exists an integer $N_m$ such that for all $\Omega > N_m$, $| \hat{J}_R(\bb^m) - J_R(\bb^m)| < \epsilon / 3$. Let $N = \max\{ N_1,\dots, N_M\}$. Then, almost surely, for all $\Omega > N$, it holds that $| \hat{J}_R(\bb^m) - J_R(\bb^m)| < \epsilon / 3$ for all $m \in [M]$. 
	
	Now, consider any $\bb \in \Bcal$. By the definition of $\{\bb^1,\dots, \bb^M\}$ as a $\delta$-net of $\Bcal$, there exists an $m$ such that $\bb \in B(\bb^m, \delta)$. For all $\Omega > N$, we therefore have
	\begin{align*}
	|\hat{J}_R(\bb) - J_R(\bb)| & = |\hat{J}_R(\bb) - \hat{J}_R(\bb^m) + \hat{J}_R(\bb^m) - J_R(\bb^m) + J_R(\bb^m) - J_R(\bb)| \\
	& \leq |\hat{J}_R(\bb) - \hat{J}_R(\bb^m)| + |\hat{J}_R(\bb^m) - J_R(\bb^m)| + |J_R(\bb^m) - J_R(\bb)| \\
	& \leq L \| \bb - \bb^m \|_1 + \frac{\epsilon}{3} + L \| \bb - \bb^m \|_1 \\
	& \leq L \cdot \frac{\epsilon}{3L} + \frac{\epsilon}{3} + L \cdot \frac{\epsilon}{3L} \\
	& = \frac{\epsilon}{3} + \frac{\epsilon}{3} + \frac{\epsilon}{3} \\
	& = \epsilon
	\end{align*}
	where the second step follows by the triangle inequality; the third step follows by using the Lipschitz continuity of $\hat{J}_R(\cdot)$ and $J_R(\cdot)$ from Lemmas~\ref{lemma:Jhat_lipschitz_continuous} and \ref{lemma:J_lipschitz_continuous} respectively, as well as the almost sure convergence of $\hat{J}_R(\cdot)$ to $J_R(\cdot)$ at $\bb^m$; the fourth step by our definition of $\bb^m$ as the point in the $\delta$-net containing $\bb$; and the remaining steps by algebra. 
	
	Since $\bb$ was arbitrary, it follows that almost surely, for all $\Omega > N$ and all $\bb \in \Bcal$, that $| \hat{J}_R(\bb) - J_R(\bb)| < \epsilon$. This completes the proof. \Halmos
\end{proof}

Using Theorem~\ref{theorem:SAA_uniform_convergence_fixed_epsilon}, we now finally prove Theorem~\ref{theorem:SAA_uniform_convergence}.

\begin{proof}{Proof of Theorem~\ref{theorem:SAA_uniform_convergence}:}
To show that $\sup_{\bb \in \Bcal} | \hat{J}_R(\bb) - J_R(\bb)| \to 0$ as $\Omega \to \infty$ almost surely, we observe that this event can be written as 
\begin{equation*}
\bigcap_{\epsilon > 0} \bigcup_{N =1}^{\infty} \bigcap_{\Omega > N} \left\{  \sup_{\bb \in \Bcal}\ | \hat{J}_R(\bb) - J_R(\bb)| < \epsilon \right\},
\end{equation*}
which is equivalent to 
\begin{equation}
\bigcap_{k=1}^{\infty} \bigcup_{N =1}^{\infty} \bigcap_{\Omega > N} \left\{ \sup_{\bb \in \Bcal}\ | \hat{J}_R(\bb) - J_R(\bb)| < \frac{1}{2^k} \right\}. \label{eq:convergence_countable_event}
\end{equation}
The event in \eqref{eq:convergence_countable_event} is the countable intersection of events of the form $\bigcup_{N =1}^{\infty} \bigcap_{\Omega > N} \{ | \hat{J}_R(\bb) - J_R(\bb)| < 1/ 2^k \}$, each of which occurs with probability one by Theorem~\ref{theorem:SAA_uniform_convergence_fixed_epsilon}. Therefore, event~\eqref{eq:convergence_countable_event} occurs with probability 1, which establishes the required result. \Halmos
\end{proof}

\subsection{Proof of Corollary~\ref{corollary:SAA_objective_convergence}}

We will first show that if $\hat{J}_R(\cdot)$ converges uniformly to $J_R(\cdot)$ on $\Bcal$, then it must be the case that $\sup_{\bb \in \Bcal} \hat{J}_R(\bb)$ converges to $\sup_{\bb \in \Bcal} J_R(\bb)$.

Let $\epsilon > 0$. Then there exists an integer $N$ such that for all $\Omega > N$, $\sup_{\bb \in \Bcal} | \hat{J}_R(\bb) - J_R(\bb) | < \epsilon / 2$. 

Let $\Omega > N$. Suppose without loss of generality that $\sup_{\bb \in \Bcal} \hat{J}_R(\bb) \leq \sup_{\bb \in \Bcal} J_R(\bb)$. Let $\tilde{\bb} \in \Bcal$ be a weight vector such that 
\begin{equation*}
J_R(\tilde{\bb}) \geq \sup_{\bb \in \Bcal} J_R(\bb) -  \frac{\epsilon}{ 2},
\end{equation*}
or equivalently, 
\begin{equation*}
J_R(\tilde{\bb}) + \frac{\epsilon}{2} \geq \sup_{\bb \in \Bcal} J_R(\bb).
\end{equation*}
Then we have
\begin{align*}
\left| \sup_{\bb \in \Bcal} J_R(\bb) - \sup_{\bb \in \Bcal} \hat{J}_R(\bb) \right| & = \sup_{\bb \in \Bcal} J_R(\bb) - \sup_{\bb \in \Bcal} \hat{J}_R(\bb) \\
& \leq J_R(\tilde{\bb}) + \frac{\epsilon}{2} - \hat{J}_R(\tilde{\bb}) \\
& \leq \sup_{\bb \in \Bcal} | \hat{J}_R(\bb) - J_R(\bb) | + \frac{\epsilon}{2} \\
& \leq \frac{\epsilon}{2} + \frac{\epsilon}{2} \\
& = \epsilon.
\end{align*}
(In the case that $\sup_{\bb \in \Bcal} \hat{J}_R(\bb) \geq \sup_{\bb \in \Bcal} J_R(\bb)$, the same steps go through, with the modification that $\tilde{\bb}$ is chosen to be within $\epsilon/2$ of $\sup \hat{J}_R(\bb)$, i.e., $\tilde{\bb}$ satisfies $\hat{J}_R(\tilde{\bb}) \geq \sup_{\bb \in \Bcal} \hat{J}_R(\bb) - \epsilon / 2$.)

Thus, we have shown that whenever $\sup_{\bb \in \Bcal} | \hat{J}_R(\bb) - J_R(\bb) | \to 0$ as $\Omega \to \infty$, we also must have that $\sup_{\bb \in \Bcal} \hat{J}_R(\bb) \to \sup_{\bb \in \Bcal} J_R(\bb)$ as $\Omega \to \infty$. Since the former occurs with probability one by Theorem~\ref{theorem:SAA_uniform_convergence}, then it must be the case that $\lim_{\Omega \to \infty} \sup_{\bb \in \Bcal} \hat{J}_R(\bb) = \sup_{\bb \in \Bcal} J_R(\bb)$ also occurs with probability one. \Halmos

\subsection{Proof of Theorem~\ref{theorem:SAA_solution_convergence}}
By Theorem 5.3 from \cite{shapiro2014lectures}, since (i) the set $\mathbf{B}^*$ of the optimal solutions of $\sup_{\mathbf{b}\in\Bcal}J_R(\bb)$ is nonempty and $\mathbf{B}^*\subseteq\Bcal$; (ii) $J_R(\cdot)$ is continuous on $\Bcal$ as $J_R(\bb)$ is a Lipschitz continuous function of $\mathbf{b}\in\Bcal$, and $J_R(\bb)$ is finite valued as we assume the reward $g(t,\mathbf{x})$ has a finite upper bound; (iii) $\hat{J}_R(\cdot)$ converges uniformly to $J_R(\cdot)$ with probability one by Theorem~\ref{theorem:SAA_uniform_convergence}; and (iv) with probability one, for $\Omega$ large enough, the set $\hat{\mathbf{B}}_{\Omega}$ is nonempty and $\hat{\mathbf{B}}\subseteq\Bcal$; then with probability one, $\mathbb{D}(\hat{\mathbf{B}},\mathbf{B}^*) \to 0$  as $\Omega \to \infty$.\Halmos

\subsection{Proof of Proposition~\ref{proposition:generalization_bound}}
\label{proof_proposition:generalization_bound}

Our proof of Proposition~\ref{proposition:generalization_bound} follows the proof of Rademacher complexity-based generalization bounds in statistical learning (see for example Theorem 3.1 in \citealt{mohri2018foundations}). For completeness, we provide the proof here. 

Given an i.i.d. sample of system realizations $S = (Y_1,\dots, Y_{\Omega})$, let $D(S)$ be the random variable defined as 
\begin{equation*}
D(S) = \sup_{f \in \Fcal} \left( \frac{1}{\Omega} \sum_{\omega=1}^{\Omega} f(Y_{\omega}) - \Exp[ f(Y) ] \right),
\end{equation*}
where $Y$ is a random variable that represents a single system realization. Our goal will be to obtain a high probability bound on $D(S)$. We will proceed in three steps: first, we will bound the deviation of $D(S)$ from its mean $\Exp[D(S)]$; second, we will bound $\Exp[D(S)]$; and finally, we will put these two inequalities together, and show how they imply our main inequalities in terms of $J_R(\cdot)$ and $\hat{J}_R(\cdot)$. \\ %

\noindent \emph{Step 1}. Let $S'_i = (Y_1,\dots, Y'_i, \dots, Y_{\Omega})$ be a sample of system realizations that differs from $S$ only in the $i$th trajectory. It is straightforward to show that
\begin{equation*}
D(S) - D(S'_i) \leq \frac{\bar{G}}{\Omega},
\end{equation*}
and that by symmetry, $D(S'_i) - D(S) \leq \bar{G} / \Omega$ as well. Together, these two inequalities imply that $D(S)$ satisfies the bounded differences property: for any $i \in \{1,\dots, \Omega\}$, any $S$ and any $Y'_i$, we have $| D(S'_i) - D(S) | \leq \bar{G} / \Omega$. 

Thus, McDiarmid's inequality implies that with probability at least $1 - \delta$ over the sample of system realizations $S$, the following inequality holds:
\begin{equation*}
D(S) - \Exp[ D(S)] \leq \bar{G} \sqrt{ \frac{\log(1/\delta)}{2\Omega} }.
\end{equation*}
\vspace{0.5em}

\noindent \emph{Step 2}. We now bound $\Exp[ D(S) ]$. Let $S' = (Y'_1, \dots, Y'_{\Omega})$ be a second i.i.d. sample of $\Omega$ system realizations. We then have
\begin{align*}
\Exp[ D(S) ] & = \Exp_{S} \left[ \sup_{f \in \Fcal} \left( \frac{1}{\Omega} \sum_{\omega=1}^{\Omega} f(Y_{\omega}) - \Exp[ f(Y)] \right) \right] \\
 & = \Exp_{S} \left[ \sup_{f \in \Fcal} \left( \frac{1}{\Omega} \sum_{\omega=1}^{\Omega} f(Y_{\omega}) - \Exp_{S'}[ \frac{1}{\Omega} \sum_{\omega=1}^{\Omega} f(Y_{\omega}) ] \right) \right] \\
&  \leq \Exp_{S,S'} \left[ \sup_{f \in \Fcal} \left( \frac{1}{\Omega} \sum_{\omega=1}^{\Omega} f(Y_{\omega}) -  \frac{1}{\Omega} \sum_{\omega=1}^{\Omega} f(Y'_{\omega})  \right) \right] \\
 & = \Exp_{S,S', \epsilonb} \left[ \sup_{f \in \Fcal}  \frac{1}{\Omega} \sum_{\omega=1}^{\Omega} \epsilon_{\omega} (f(Y_{\omega}) -  f(Y'_{\omega}) )  \right] \\
& \leq \Exp_{S,S', \epsilonb} \left[ \sup_{f \in \Fcal}  \frac{1}{\Omega} \sum_{\omega=1}^{\Omega} \epsilon_{\omega} f(Y_{\omega})  \right] + \Exp_{S,S', \epsilonb} \left[ \sup_{f \in \Fcal}  \frac{1}{\Omega} \sum_{\omega=1}^{\Omega} \epsilon_{\omega} f(Y'_{\omega})  \right]  \\
& = 2 R(\Fcal),
\end{align*}
where $\epsilonb = (\epsilon_1,\dots, \epsilon_{\Omega})$ denotes an i.i.d. set of Rademacher random variables, that is, each $\epsilon_{\omega}$ satisfies $\Pbb( \epsilon_{\omega} = +1) = 1/2$, $\Pbb( \epsilon_{\omega} = -1) = 1/2$. \\

\noindent \emph{Step 3}. Using the results from Step 1 and Step 2, we have that $D(S) \leq 2 R(\Fcal) + \bar{G} \sqrt{ \log(1/\delta) / (2\Omega) }$. By the definition of $D$, this implies that 
\begin{equation*}
\frac{1}{\Omega} \sum_{\omega = 1}^{\Omega} f(Y_{\omega}) - \Exp[f(Y)] \leq 2 R(\Fcal) + \bar{G} \sqrt{ \frac{\log(1/\delta)}{2\Omega} }, \quad \forall f \in \Fcal,
\end{equation*}
or equivalently,
\begin{equation}
 \Exp[f(Y)] \geq \frac{1}{\Omega} \sum_{\omega = 1}^{\Omega} f(Y_{\omega}) - 2 R(\Fcal) - \bar{G} \sqrt{ \frac{\log(1/\delta)}{2\Omega} }, \quad \forall f \in \Fcal.
 \label{eq:generalization_f_Y}
\end{equation}
Note that by the definition of $\Fcal$, $f = \Gamma \circ \psi_{\bb}$ for some $\bb \in \Bcal$, and thus 
\begin{align*}
\Exp[ f(Y) ] & = \Exp[ (\Gamma \circ \psi_{\bb})(Y) ] \\
& = J_R(\bb),
\end{align*}
and 
\begin{align*}
\frac{1}{\Omega} \sum_{\omega = 1}^{\Omega} f(Y_{\omega}) & = \frac{1}{\Omega} \sum_{\omega = 1}^{\Omega} (\Gamma \circ \psi_{\bb}) (Y_{\omega}) \\
& = \hat{J}_R(\bb).
\end{align*}
Thus, \eqref{eq:generalization_f_Y} is equivalent to
\begin{equation*}
J_R(\bb) \geq \hat{J}_R(\bb) - 2 R(\Fcal) - \bar{G} \sqrt{ \frac{\log(1/\delta)}{2\Omega} }, \quad \forall \bb \in \Bcal,
\end{equation*}
which is exactly inequality~\eqref{eq:rademacher_ordinary_generalization_bound}. 

To establish inequality~\eqref{eq:rademacher_empirical_generalization_bound}, let $\hat{R}_S(\Fcal)$ be the empirical Rademacher complexity with respect to a sample of system realizations $S$. It is straightforward to verify that $\hat{R}_S(\Fcal)$ satisfies the bounded differences property with the bound $\bar{G}/\Omega$: for any sample $S'_i$ that differs from $S$ in only the $i$th trajectory, $|\hat{R}_S(\Fcal) - \hat{R}_{S'_i}(\Fcal)| \leq \bar{G} / \Omega$. By then applying McDiarmid's inequality, we can bound the deviation of $\hat{R}_S(\Fcal)$ from $R(\Fcal)$: we have
\begin{equation}
R(\Fcal) - \hat{R}_S(\Fcal) \leq \bar{G} \sqrt{ \frac{\log(1/\delta)}{2\Omega} }, \label{eq:bound_empirical_from_ordinary_Rademacher}
\end{equation}
with probability at least $1 - \delta$ over the sample of trajectories $S$. 

By now plugging in $\delta / 2$ instead of $\delta$ in both inequality~\eqref{eq:generalization_f_Y} and inequality~\eqref{eq:bound_empirical_from_ordinary_Rademacher} and combining them with the union bound, we obtain that with probability at least $1 - \delta$,
\begin{equation}
 \Exp[f(Y)] \geq \frac{1}{\Omega} \sum_{\omega = 1}^{\Omega} f(Y_{\omega}) - 2 \hat{R}_S(\Fcal) - 3\bar{G} \sqrt{ \frac{\log(2/\delta)}{2\Omega} }, \quad \forall f \in \Fcal.
 \label{eq:generalization_f_Y_empirical}
\end{equation}
This is equivalent to 
\begin{equation}
J_R(\bb) \geq \hat{J}_R(\bb) - 2 \hat{R}_S(\Fcal) - 3\bar{G} \sqrt{ \frac{\log(2/\delta)}{2\Omega} }, \quad \forall \bb \in \Bcal,
\end{equation}
which is exactly inequality~\eqref{eq:rademacher_empirical_generalization_bound}. \Halmos

\subsection{Proof of Theorem~\ref{theorem:main_rademacher_bound}}
\label{proof_theorem:main_rademacher_bound}

To prove Theorem~\ref{theorem:main_rademacher_bound}, we need to first establish a number of auxiliary results. Our first result is that the function $\Gamma$, which maps the vector produced by $\psi_{\bb}$ to an expected reward, is Lipschitz continuous with a particular constant. Note that for this result, Lipschitz continuity is understood with respect to the $L_2$ norm, as this will be needed later for the application of Maurer's contraction inequality. 

\begin{lemma}
	The function $\Gamma: \mathbb{R}^T \times [0, \bar{G}]^T \to \mathbb{R}$ is Lipschitz continuous with Lipschitz constant $\bar{G}+1$. \label{lemma:Gamma_Lipschitz}
\end{lemma}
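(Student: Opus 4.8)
The plan is to show that $\Gamma$ is continuously differentiable on its domain $\mathbb{R}^T \times [0,\bar{G}]^T$ (which is convex, being a product of convex sets) and that the Euclidean norm of its gradient is bounded everywhere by $\bar{G}+1$; Lipschitz continuity with constant $\bar{G}+1$ then follows from the mean value theorem applied along line segments inside the domain, exactly as in the proof of Lemma~\ref{lemma:sigma_bb_bullet_phi_is_lipschitz}. The crux is to compute $\nabla \Gamma$ in a form that exposes the probabilistic structure of the stopping-time distribution. To this end I would introduce three quantities: the reach probability $P_s = \prod_{t'=1}^{s-1}(1-\sigma(u_{t'}))$ (the probability of not stopping before $s$), the stopping probability $p_t = P_t\,\sigma(u_t)$, and the tail reward $R_s = \sum_{t=s}^T v_t \prod_{t'=s}^{t-1}(1-\sigma(u_{t'}))\,\sigma(u_t)$, so that $\Gamma(\ub,\vb)=R_1$ and $R_s = v_s\sigma(u_s) + (1-\sigma(u_s))R_{s+1}$ with $R_{T+1}=0$. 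Two elementary facts drive the whole argument: (i) the $p_t$ are nonnegative and $\sum_{t=1}^T p_t \le 1$, i.e., they form a sub-probability distribution; and (ii) $0\le R_s \le \bar{G}$ for every $s$, which follows by backward induction since $R_s$ is a convex combination of $v_s\in[0,\bar{G}]$ and $R_{s+1}$.

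The gradient computation then proceeds as follows. The $\vb$-partials are immediate: $\partial\Gamma/\partial v_t = p_t$. For the $\ub$-partials I would write $R_1 = \sum_{t<s} p_t v_t + P_s R_s$ and observe that neither the first sum nor $P_s$ depends on $u_s$, so that $\partial\Gamma/\partial u_s = P_s\,(\partial R_s/\partial u_s) = P_s\,\sigma'(u_s)\,(v_s - R_{s+1})$, using $\sigma'=\sigma(1-\sigma)$ and the recursion for $R_s$. Since $P_s\sigma'(u_s) = p_s(1-\sigma(u_s)) \le p_s$ and $|v_s - R_{s+1}|\le \bar{G}$ by fact (ii), each $\ub$-partial satisfies $|\partial\Gamma/\partial u_s| \le \bar{G}\,p_s$.

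Combining these bounds, $\|\nabla_{\vb}\Gamma\|_2^2 = \sum_t p_t^2 \le \sum_t p_t \le 1$ and $\|\nabla_{\ub}\Gamma\|_2^2 \le \bar{G}^2\sum_s p_s^2 \le \bar{G}^2$, where in both places I use $0\le p_t\le 1$ together with fact (i) to conclude $\sum_t p_t^2\le \sum_t p_t\le 1$. Hence $\|\nabla\Gamma\|_2 \le \sqrt{\bar{G}^2+1}\le \bar{G}+1$, and the mean value theorem closes the proof. The main obstacle, and indeed the entire point of the lemma, is obtaining a bound that is free of $T$: a naive term-by-term estimate of $\partial\Gamma/\partial u_s$ via the triangle inequality would accumulate a factor growing with $T$, and the essential trick is to recognize that the stopping probabilities $p_t$ sum to at most one, so that $\sum_t p_t^2\le 1$ regardless of $T$. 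Rewriting the gradient in the clean form $P_s\,\sigma'(u_s)(v_s-R_{s+1})$ is precisely what makes this probabilistic structure visible and is therefore the key step.
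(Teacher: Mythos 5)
Your proof is correct and takes essentially the same route as the paper's: both compute the gradient, factor each $u_s$-partial as a stopping probability times a reward-difference term bounded by $\bar{G}$ (your $P_s\sigma'(u_s)(v_s-R_{s+1})$ is algebraically identical to the paper's $p_sA_s$), and exploit the sub-probability structure $p_t^2 \le p_t$, $\sum_t p_t \le 1$ to get a $T$-free bound on $\|\nabla\Gamma\|_2$ before closing with the mean value theorem and Cauchy--Schwarz. The only cosmetic differences are that you bound the tail reward via the recursion $R_s = v_s\sigma(u_s) + (1-\sigma(u_s))R_{s+1}$ and backward induction rather than directly from $\sum_{t'}\tilde{p}_{t'}\le 1$, and you combine the two gradient blocks into the slightly sharper $\sqrt{\bar{G}^2+1}$ rather than using the triangle inequality.
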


\begin{proof}{Proof of Lemma~\ref{lemma:Gamma_Lipschitz}:}	
	To prove this, we will show that the $L_2$ norm of the gradient of $\Gamma$ can be bounded by $\bar{G}+1$. To begin, let us consider the partial derivatives of $\Gamma$:
	\begin{align}
	\frac{\partial}{\partial v_t} \Gamma & = \prod_{t'=1}^{t-1} (1 - \sigma(u_t)) \sigma(u_t), \\
	\frac{\partial}{\partial u_t} \Gamma & = v_t \sigma(u_t) (1 - \sigma(u_t)) \prod_{t'=1}^{t-1}(1 - \sigma(u_{t'})) - \sum_{t'=t+1} v_{t'} \sigma(u_t)(1 - \sigma(u_t)) \prod_{t''=1}^{t-1}(1 - \sigma(u_{t''})) \prod_{t''=t+1}^{t'-1} (1 - \sigma(u_{t''})) \sigma(u_{t'})
	\end{align}
	Observe that we can further re-arrange the partial derivative with respect to $u_t$ as
	\begin{align*}
	\frac{\partial}{\partial u_t} \Gamma & = \left[ \prod_{t'=1}^{t-1} (1 - \sigma(u_{t'})) \sigma(u_t) \right] \cdot \left[ v_t - \sum_{t'=t}^T v_{t'} \prod_{t''=t}^{t'-1} (1 - \sigma(u_{t''})) \sigma(u_{t'}) \right].
	\end{align*}
	For a fixed $t$, let us define $A_t$ as 
	\begin{equation}
	A_t = v_t - \sum_{t'=t}^T v_{t'} \prod_{t''=t}^{t'-1} (1 - \sigma(u_{t''})) \sigma(u_{t'}),
	\end{equation}
	and let us define $\tilde{p}_{t'}$ for each $t' \in \{t, \dots, T\}$ as 
	\begin{equation}
	\tilde{p}_{t'} = \prod_{t''=t}^{t'-1} (1 - \sigma(u_{t''})) \sigma(u_{t'}).
	\end{equation}
	We can thus re-write $A_t$ as $A_t = v_t - \sum_{t'=t}^T v_{t'} \tilde{p}_{t'}$, which allows us to bound it from above as follows:
	\begin{align*}
	A_t & = v_t - \sum_{t'=t}^T v_{t'} \tilde{p}_{t'} \\
	& \leq \bar{G} - \sum_{t'=t}^T 0 \tilde{p}_{t'} \\
	& = \bar{G},
	\end{align*}
	where we also use the fact that each $v_t$ is bounded between 0 and $\bar{G}$. 
	
	We can also bound $A_t$ from below as follows:
	\begin{align*}
	A_t & = v_t - \sum_{t'=t}^T v_{t'} \tilde{p}_{t'} \\
	& \geq 0 - \sum_{t'=t}^T \bar{G} \tilde{p}_{t'} \\
	& \geq - \bar{G},
	\end{align*}
	where the first inequality follows because each $v_t$ is bounded between 0 and $\bar{g}$, and the second inequality follows because each $\tilde{p}_{t'} \geq 0$ and $\sum_{t'=t}^T \tilde{p}_{t'} \leq 1$. (Each $\tilde{p}_{t'}$ can be thought of as the probability of stopping at $t'$ according to the logits given in $\ub$, conditional on starting from period $t$.) Thus, we have that $|A_t| \leq \bar{G}$. 
	
	Having defined and bounded $A_t$, let us additionally define $p_t$ as 
	\begin{equation}
	p_t = \prod_{t'=1}^{t-1} (1 - \sigma(u_{t'})) \sigma(u_t). 
	\end{equation}
	Similarly to the $\tilde{p}_{t'}$ values, it is straightforward to establish that $\sum_{t=1}^T p_t \leq 1$. With $p_t$ now defined, we can write the partial derivatives of $\Gamma$ more compactly as 
	\begin{align}
	\frac{\partial}{\partial v_t} \Gamma & = p_t, \\
	\frac{\partial}{\partial u_t} \Gamma & = p_t A_t.
	\end{align}
	We can now proceed to bound the gradient of $\Gamma$. We have
	\begin{align*}
	\| \nabla \Gamma \|_2 & = \left\| \left[ \begin{array}{cc} \nabla_\ub \Gamma \\ \nabla_\vb \Gamma \end{array} \right] \right\|_2 \\
	& \leq \| \nabla_\ub \Gamma \|_2 + \| \nabla_\vb \Gamma \|_2 \\ 
	& = \left\| \left[ \begin{array}{c} p_1 A_1 \\ \vdots \\ p_T A_T \end{array} \right] \right\|_2 + \left\| \left[ \begin{array}{c} p_1 \\ \dots \\ p_T \end{array} \right] \right\|_2 \\
	& = \sqrt{ p_1^2 A_1^2 + \dots + p_T^2 A_T^2 } + \sqrt{ p_1^2 + \dots p_T^2 } \\
	& \leq \sqrt{ p_1 A_1^2 + \dots p_T A_T^2 } + \sqrt{ p_1 + \dots + p_T } \\
	& \leq \sqrt{ p_1 \bar{G}^2 + \dots p_T \bar{G}^2 } + \sqrt{p_1 + \dots + p_T } \\
	& \leq \sqrt{ \bar{G}^2 } + \sqrt{1} \\
	& = \bar{G} + 1,
	\end{align*}
	where the first inequality follows by the fact that $p_t^2 \leq p_t$ (since each $p_t \leq 1$); the second inequality follows by the fact that $|A_t| \leq \bar{G}$ for each $t$; and the last inequality follows by the fact that $\sum_{t=1}^T p_t \leq 1$. 
	
	Having established that $\| \nabla \Gamma \|_2 \leq \bar{G} + 1$, the fact that $\Gamma$ is Lipschitz with constant $\bar{G} + 1$ follows by applying the mean value theorem and the Cauchy-Schwartz inequality. \Halmos
\end{proof}

Armed with this result that $\Gamma$ is Lipschitz, we can now relate the Rademacher complexity of $\Fcal$ (the class of functions which map system realizations to rewards) to the Rademacher complexity of the weight vector set $\Bcal$. We do so by using Maurer's vector contraction inequality \citep{maurer2016vector}, which is a result for analyzing the Rademacher complexity of a function class that arises from composing a vector-valued function with a Lipschitz function. 

\begin{lemma}
	The empirical Rademacher complexity of $\Fcal$ can be bounded as $\hat{R}(\Fcal) \leq \sqrt{2} (\bar{G}+1) \hat{R}(\Bcal)$, where the empirical Rademacher complexity $\hat{R}(\Bcal)$ of the set of feasible weight vectors is defined as 
	\begin{equation}
	\hat{R}(\Bcal) = \frac{1}{\Omega} \Exp \left[ \sup_{\bb \in \Bcal} \sum_{\omega=1}^{\Omega} \sum_{t=1}^T \epsilon_{\omega,t} \bb_t \bullet \Phi(\xb(\omega,t)) \right].
	\end{equation}
	\label{lemma:vector_contraction}
\end{lemma}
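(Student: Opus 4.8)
The plan is to apply Maurer's vector contraction inequality, viewing each function $f = \Gamma \circ \psi_{\bb} \in \Fcal$ as the composition of the fixed scalar-valued Lipschitz function $\Gamma$ with the vector-valued function $\psi_{\bb}$, and treating $\{\psi_{\bb} \mid \bb \in \Bcal\}$ as the relevant class of vector-valued functions. By Lemma~\ref{lemma:Gamma_Lipschitz}, $\Gamma$ is Lipschitz continuous (with respect to the $L_2$ norm) with constant $\bar{G}+1$ on its domain $\mathbb{R}^T \times [0,\bar{G}]^T$, and by Assumption~\ref{assumption:bounded_payoff} the reward coordinates of $\psi_{\bb}(Y)$ always lie in $[0,\bar{G}]^T$, so $\Gamma$ is indeed Lipschitz wherever it is evaluated here.

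First I would invoke Maurer's inequality with the single Lipschitz map $\Gamma$ applied at each of the $\Omega$ sample points. This yields
\begin{equation*}
\Exp_{\epsilonb}\left[\sup_{\bb\in\Bcal}\sum_{\omega=1}^{\Omega}\epsilon_{\omega}\,\Gamma(\psi_{\bb}(Y_{\omega}))\right] \leq \sqrt{2}(\bar{G}+1)\,\Exp\left[\sup_{\bb\in\Bcal}\sum_{\omega=1}^{\Omega}\sum_{j=1}^{2T}\epsilon_{\omega,j}\,(\psi_{\bb}(Y_{\omega}))_j\right],
\end{equation*}
where $\{\epsilon_{\omega,j}\}$ is a doubly-indexed family of independent Rademacher variables, with $j$ ranging over the $2T$ coordinates of $\psi_{\bb}$. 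After dividing both sides by $\Omega$, the left-hand side is exactly $\hat{R}(\Fcal)$.

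The key observation to finish is that the last $T$ coordinates of $\psi_{\bb}(Y)$ --- the rewards $g(t,\xb(t))$ --- do not depend on $\bb$. Splitting the inner double sum into its first $T$ (logit) coordinates and its last $T$ (reward) coordinates, the reward portion factors out of the supremum over $\bb$, and its expectation over the corresponding Rademacher variables is zero since $\Exp[\epsilon_{\omega,T+t}]=0$. What remains is precisely
\begin{equation*}
\Exp\left[\sup_{\bb\in\Bcal}\sum_{\omega=1}^{\Omega}\sum_{t=1}^{T}\epsilon_{\omega,t}\,\bb_t\bullet\Phi(\xb(\omega,t))\right] = \Omega\,\hat{R}(\Bcal),
\end{equation*}
which gives $\hat{R}(\Fcal)\leq\sqrt{2}(\bar{G}+1)\hat{R}(\Bcal)$ after rescaling by $1/\Omega$.

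The main subtlety I expect to need care with is the bookkeeping when applying Maurer's inequality to a vector-valued class in which some coordinates are constant in the optimization variable $\bb$; one must verify that these constant coordinates drop out cleanly, both from the supremum and, in expectation, from the Rademacher sum, so that the bound is stated in terms of $\hat{R}(\Bcal)$ rather than a larger quantity involving the reward coordinates. The Lipschitz constant $\bar{G}+1$ and the $\sqrt{2}$ factor then come directly from Lemma~\ref{lemma:Gamma_Lipschitz} and the statement of Maurer's inequality, respectively.
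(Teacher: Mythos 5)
Your proposal is correct and follows essentially the same route as the paper's proof: invoke Maurer's vector contraction inequality with the Lipschitz constant $\bar{G}+1$ from Lemma~\ref{lemma:Gamma_Lipschitz}, then observe that the last $T$ coordinates of $\psi_{\bb}$ do not depend on $\bb$, so their Rademacher-weighted sum vanishes in expectation, leaving exactly $\sqrt{2}(\bar{G}+1)\hat{R}(\Bcal)$. The only cosmetic difference is that you pull the constant reward coordinates out of the supremum exactly, whereas the paper first splits the supremum by subadditivity and then kills the reward term; both handle the bookkeeping equally well.
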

\begin{proof}{Proof of Lemma~\ref{lemma:vector_contraction}:}
	To establish this, we will use a specific form of the vector contraction inequality from \citet{maurer2016vector}, which we re-state here:
	
	\begin{lemma}[Corollary 4 of \citet{maurer2016vector}]
		Let $\Xcal$ be any set, $(x_1,\dots,x_n) \in \Xcal^n$, let $F$ be a class of functions $f: \Xcal \to \ell_2$ and let $h_i: \ell_2 \to \mathbb{R}$ have Lipschitz constant $L$. Then 
		\begin{equation}
		\Exp[ \sup_{f \in F} \sum_{i=1}^n \epsilon_i h_i( f(x_i)) ] \leq \sqrt{2} L \Exp[ \sup_{f \in F} \sum_{i,k} \epsilon_{i,k} f_k(x_i)],
		\end{equation}
		where $\ell_2$ is the set of square summable sequences of real numbers, $\{ \epsilon_i \}$ is a collection of independent Rademacher variables, $\{ \epsilon_{i,k} \}$ is a collection of independent (doubly indexed) Rademacher variables, and $f_k(x_i)$ is the $k$th component of $f(x_i)$. 
	\end{lemma}

	With this result in mind, we bound the empirical Rademacher complexity as follows:
	\begin{align*}
	\hat{R}(\Fcal)
	& = \frac{1}{\Omega} \Exp \left[ \sup_{f \in \Fcal} \sum_{\omega=1}^{\Omega} \epsilon_{\omega} f(Y_\omega) \right] \\ 
	& = \frac{1}{\Omega} \Exp \left[ \sup_{\bb \in \Bcal} \sum_{\omega=1}^{\Omega} \epsilon_{\omega} (\Gamma \circ \psi_{\bb})(Y_\omega) \right] \\ 
	& \leq \frac{1}{\Omega} \sqrt{2} (\bar{G}+1) \Exp \left[ \sup_{\bb \in \Bcal} \sum_{\omega=1}^{\Omega} \sum_{t=1}^{2T} \epsilon_{\omega,t} \psi_{\bb,t}(Y_\omega) \right] \\
	& \leq \frac{1}{\Omega} \sqrt{2}  (\bar{G}+1) \Exp \left[ \sup_{\bb \in \Bcal} \sum_{\omega=1}^{\Omega} \sum_{t=1}^{T} \epsilon_{\omega,t} \psi_{\bb,t}(Y_\omega) \right] \\
	& \phantom{\leq} + \frac{1}{\Omega} \sqrt{2}  (\bar{G}+1) \Exp \left[ \sup_{\bb \in \Bcal} \sum_{\omega=1}^{\Omega} \sum_{t=T+1}^{2T} \epsilon_{\omega,t} \psi_{\bb,t}(Y_\omega) \right] \\
	& = \frac{1}{\Omega} \sqrt{2}  (\bar{G}+1) \Exp \left[ \sup_{\bb \in \Bcal} \sum_{\omega=1}^{\Omega} \sum_{t=1}^{T} \epsilon_{\omega,T+t} \bb_t \bullet \Phi(\xb(\omega,t)) \right] \\
	& \phantom{\leq} + \frac{1}{\Omega} \sqrt{2}  (\bar{G}+1) \Exp \left[ \sup_{\bb \in \Bcal} \sum_{\omega=1}^{\Omega} \sum_{t=1}^T \epsilon_{\omega,t} g(t, \xb(\omega,t)) \right] \\
	& = \frac{1}{\Omega} \sqrt{2}  (\bar{G}+1) \Exp \left[ \sup_{\bb \in \Bcal} \sum_{\omega=1}^{\Omega} \sum_{t=1}^{T} \epsilon_{\omega,t} \bb_t \bullet \Phi(\xb(\omega,t)) \right] \\
	& = \sqrt{2} (\bar{G}+1) \hat{R}(\Bcal),
	\end{align*}
	where the first inequality follows by Lemma~\ref{lemma:Gamma_Lipschitz} and Maurer's vector contraction inequality (note that $\psi_{\bb,t}(Y)$ is used to denote the $t$th coordinate of $\psi_{\bb}(Y))$; the second inequality follows by basic properties of suprema and by linearity of expectation; the third equality follows by the definition of $\psi_{\bb}(\cdot)$; and the fourth equality follows because the last $T$ coordinates of $\psi_{\bb}(\cdot)$ do not depend on $\bb$, and thus the expectation of the weighted sum of the Rademacher random variables works out to zero. \Halmos
\end{proof}

We are now in a position to prove Theorem~\ref{theorem:main_rademacher_bound}.

\begin{proof}{Proof of Theorem~\ref{theorem:main_rademacher_bound}:}
	
	To establish each of the three statements, we first bound $\hat{R}(\Bcal)$; combining this bound with Lemma~\ref{lemma:vector_contraction} then establishes the result. We note that the proofs of part (a) and part (b) follow standard arguments for obtaining the Rademacher complexity of hypothesis classes defined by norm balls (for example, see the proofs of Theorem 11 and 12 in \citealt{liang2018notes}). \\ 
	
	\noindent \emph{Proof of Part (a):} For this result, observe that $\Bcal$ is equal to the $L_1$ ball of radius $B$, and is a bounded polyhedron. Therefore, letting $\Bcal^{ext}$ denote the set of extreme points of $\Bcal$, we can write $\Bcal$ as $\Bcal = \conv( \Bcal^{ext})$. By a standard property of Rademacher complexity, we thus have $\hat{R}(\Bcal) = \hat{R}( \Bcal^{\ext})$. 
	
	Each extreme point $\bb \in \Bcal^{ext}$ is either of the form $\bb = + B \eb^{t',k'}$ or $\bb = -B \eb^{t',k'}$, where $\eb^{t,k}$ is the standard unit vector with a one at the $(t,k)$ position, and zeros everywhere else. Thus, given $\bb = \pm B \eb^{t',k'}$, and given $\omega \in [\Omega]$ and $t \in [T]$, we will have
	\begin{align*}
	|\bb_t \bullet \Phi(\xb(\omega,t))| & = | B \eb^{t',k'} \bullet \Phi(\xb(\omega,t)) | \\
	& = B | \phi_{k'}( \xb(\omega,t) ) |
	\end{align*}
	if $t = t'$, and $|\bb_t \bullet \Phi(\xb(\omega,t))| = 0$ if $t \neq t'$. 

	Thus, given $\bb \in \Bcal^{ext}$, the vector $\wb = [ w_{\omega,t} ]_{\omega,t}$ where $w_{\omega,t} = \bb_t \bullet \Phi(\xb(\omega,t))$, has $L_2$ norm of 
	\begin{align*}
	\| \wb \|_2 & = \sqrt{ \sum_{\omega = 1}^{\Omega} \sum_{t=1}^T  w_{\omega,t}^2 } \\
	& = \sqrt{ \sum_{\omega=1}^\Omega w_{\omega,t'}^2 } \\
	& \leq \sqrt{ \sum_{\omega=1}^\Omega B^2 Q^2 } \\
	& = \sqrt{\Omega} B Q.
	\end{align*}
	
	We now recall Massart's finite lemma (see Theorem 3.3 in \citealt{mohri2018foundations}):
	\begin{lemma}[Massart's Finite Lemma]
		Let $A \subset \mathbb{R}^m$ be a finite set, with $r = \max_{\xb \in A} \| \xb \|_2$. Then we have 
		\begin{equation*}
		\Exp[ \sup_{\xb \in A} \sum_{i=1}^m x_i \epsilon_i] \leq r \sqrt{ 2 \log |A| },
		\end{equation*}
		where $\epsilon_1,\dots, \epsilon_m$ are i.i.d. Rademacher variables. 
	\end{lemma}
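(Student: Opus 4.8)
The plan is to prove Massart's finite lemma via the exponential (Chernoff-style) moment method, which converts the expected supremum into a moment generating function that can be controlled factor by factor. First I would introduce a free parameter $\lambda > 0$ and apply Jensen's inequality to the convex function $\exp(\cdot)$, which gives
\[
\exp\left(\lambda \Exp\left[\sup_{\xb \in A}\sum_{i=1}^m x_i \epsilon_i\right]\right) \le \Exp\left[\exp\left(\lambda \sup_{\xb \in A}\sum_{i=1}^m x_i \epsilon_i\right)\right].
\]
The key observation is that $\exp$ is increasing, so the exponential of the supremum equals the supremum of the exponentials; and since $A$ is finite, this supremum is bounded by the sum over $A$, i.e. $\sup_{\xb \in A}\exp(\lambda\sum_i x_i\epsilon_i) \le \sum_{\xb\in A}\exp(\lambda\sum_i x_i\epsilon_i)$.

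Next I would move the expectation inside the finite sum over $A$ and exploit the independence of the Rademacher variables $\epsilon_1,\dots,\epsilon_m$ to factorize each summand as $\prod_{i=1}^m \Exp[\exp(\lambda x_i \epsilon_i)]$. The crucial estimate is the sub-Gaussian moment generating function bound $\Exp[\exp(\lambda x_i \epsilon_i)] = \cosh(\lambda x_i) \le \exp(\lambda^2 x_i^2/2)$, which follows from a term-by-term comparison of the Taylor series of $\cosh$ and $\exp$. Taking the product over $i$ yields $\prod_{i=1}^m \Exp[\exp(\lambda x_i\epsilon_i)] \le \exp(\lambda^2 \|\xb\|_2^2/2) \le \exp(\lambda^2 r^2/2)$, where the last step uses the definition $r = \max_{\xb \in A}\|\xb\|_2$. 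Combining these inequalities gives $\exp(\lambda \Exp[\sup_{\xb\in A}\sum_i x_i\epsilon_i]) \le |A|\exp(\lambda^2 r^2/2)$, and taking logarithms and dividing by $\lambda$ produces
\[
\Exp\left[\sup_{\xb\in A}\sum_{i=1}^m x_i \epsilon_i\right] \le \frac{\log|A|}{\lambda} + \frac{\lambda r^2}{2}.
\]

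Finally I would optimize the right-hand side over $\lambda > 0$; elementary one-variable calculus shows the minimum is attained at $\lambda = \sqrt{2\log|A|}/r$, which upon substitution yields exactly the claimed bound $r\sqrt{2\log|A|}$. The main obstacle is really only the sub-Gaussian moment generating function inequality $\cosh(\lambda x) \le \exp(\lambda^2 x^2/2)$; once this is in hand, the remainder is a routine chain of Jensen's inequality, the finiteness of $A$, and the optimization in $\lambda$. I would also note the degenerate case $r = 0$, in which $A = \{\zerob\}$ and both sides of the inequality equal zero, so that the division by $r$ in the optimal choice of $\lambda$ need not be addressed.
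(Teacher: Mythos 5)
Your proof is correct: the chain of Jensen's inequality, the bound of the supremum of exponentials by the sum over the finite set $A$, the factorization via independence, the sub-Gaussian bound $\Exp[e^{\lambda x_i \epsilon_i}] = \cosh(\lambda x_i) \leq e^{\lambda^2 x_i^2/2}$, and the optimization $\lambda = \sqrt{2\log|A|}/r$ all go through, and your handling of the degenerate case $r = 0$ is appropriate. Note that the paper does not prove this lemma at all --- it invokes it as a known result, citing Theorem 3.3 of \cite{mohri2018foundations} --- and your argument is precisely the standard moment-generating-function proof given in that reference, so there is no substantive divergence to report.
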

	
	Let $W$ consist of vectors $\wb$ constructed in the manner described above for each extreme point in $\Bcal^{ext}$. We clearly have that $|W| = |\Bcal^{ext}| = 2KT$. We therefore have
	\begin{align*}
	\hat{R}(\Bcal) & = \frac{1}{\Omega} \Exp \left[ \sup_{\bb \in \Bcal} \sum_{\omega=1}^{\Omega} \sum_{t=1}^T \epsilon_{\omega,t} \bb_t \bullet \Phi(\xb(\omega,t)) \right] \\
	& = \frac{1}{\Omega} \Exp \left[ \sup_{\bb \in \Bcal^{ext}} \sum_{\omega=1}^{\Omega} \sum_{t=1}^T \epsilon_{\omega,t} \bb_t \bullet \Phi(\xb(\omega,t)) \right] \\
	& = \frac{1}{\Omega} \Exp \left[ \sup_{\wb \in W} \sum_{\omega=1}^{\Omega} \sum_{t=1}^T \epsilon_{\omega,t} w_{\omega,t} \right] \\
	& \leq \frac{1}{\Omega} \cdot \sqrt{\Omega} BQ \cdot \sqrt{ 2\log(2KT) } \\
	& = \frac{BQ \sqrt{ 2\log(2KT) } }{\sqrt{\Omega}},
	\end{align*}
	where the inequality follows by Massart's finite lemma. \\
	
	\noindent \emph{Proof of Part (b):} For this case, observe that we can write 
	\begin{align}
	& \Exp \left[ \sup_{\bb \in \Bcal} \sum_{\omega=1}^{\Omega} \sum_{t=1}^T \epsilon_{\omega,t} \bb_t \bullet \Phi(\xb(\omega,t)) \right] \\
	& = \Exp \left[ \sup_{\bb \in \Bcal} \sum_{t=1}^T \bb_t \bullet \left[ \sum_{\omega=1}^{\Omega}  \epsilon_{\omega,t} \Phi(\xb(\omega,t)) \right]  \right] \nonumber \\ 
	& = \Exp \left[ \sup_{\bb \in \Bcal} \bb \bullet \Vb \right] \label{eq:L2_b_bullet_Vb}
	\end{align}
	where $\Vb$ is defined as
	\begin{align*}
	\Vb & = \left[ \begin{array}{c} \sum_{\omega=1}^{\Omega} \epsilon_{\omega,1} \Phi(\xb(\omega,1)) \\ 
	\vdots \\
	\sum_{\omega=1}^{\Omega} \epsilon_{\omega,T} \Phi(\xb(\omega,T)) \end{array} \right] \\
	& = \sum_{\omega=1}^{\Omega} \epsilon_{\omega,1} \left[ \begin{array}{c} \Phi(\xb(\omega,1)) \\ 
	\zerob \\
	\vdots \\
	\zerob \end{array} \right] + \dots + \sum_{\omega=1}^{\Omega} \epsilon_{\omega,T} \left[ \begin{array}{c} \zerob \\ 
	\vdots \\
	\zerob \\
	\Phi(\xb(\omega,T)) \end{array} \right]. 
	\end{align*}
	For convenience let us define the vectors $\Vb_{\omega,1}, \dots, \Vb_{\omega,T} \in \mathbb{R}^{KT}$ as 
	\begin{equation*}
	\Vb_{\omega,1} = \left[ \begin{array}{c} \Phi(\xb(\omega,1)) \\ 
	\zerob \\
	\vdots \\
	\zerob \end{array} \right], \qquad \dots \qquad, \Vb_{\omega,T} = \left[ \begin{array}{c} \zerob \\ 
	\vdots \\
	\zerob \\
	\Phi(\xb(\omega,T)) \end{array} \right],
	\end{equation*}
	so that $\Vb = \sum_{\omega=1}^{\Omega} \sum_{t=1}^T \epsilon_{\omega,t} \Vb_{\omega,t}$. 
	
	Let us now proceed with bounding \eqref{eq:L2_b_bullet_Vb}:
	\begin{align*}
	\Exp \left[ \sup_{\bb \in \Bcal} \bb \bullet \Vb \right] & = B \Exp[ \| \Vb \|_2 ] \\
	& \leq B \sqrt{ \Exp[ \| \Vb \|^2_2 ] } \\
	& =  B \sqrt{ \Exp\left[ \| \sum_{\omega=1}^{\Omega} \sum_{t=1}^T \epsilon_{\omega,t} \Vb_{\omega,t} \|^2_2  \right] } \\
	& =  B \sqrt{ \Exp \left[ \sum_{\omega=1}^{\Omega} \sum_{t=1}^T \epsilon_{\omega,t}^2 \| \Vb_{\omega,t} \|^2_2 \right] } \\
	& =  B \sqrt{ \Exp \left[ \sum_{\omega=1}^{\Omega} \sum_{t=1}^T \| \Vb_{\omega,t} \|^2_2 \right] } \\
	& =  B \sqrt{ \sum_{\omega=1}^{\Omega} \sum_{t=1}^T \| \Vb_{\omega,t} \|^2_2 ] }
	\end{align*}
	where the first step follows because the maximizing $\bb \in \Bcal$ is equal to $\bb = B \Vb / \| \Vb \|_2$; the second step follows by the concavity of $f(x) = \sqrt{x}$ and Jensen's inequality; the third step follows by the definition of the $\Vb_{\omega,t}$'s; the fourth step follows by expanding the square of the norm, and then using the independence of the $\epsilon_{\omega,t}$ to eliminate the cross-terms; and the last step by recognizing that the $\Vb_{\omega,t}$ vectors are not random. 
	
	At this juncture, we observe that the square 2-norm of the $\Vb_{\omega,t}$'s can be bounded as follows:
	\begin{align*}
	\| \Vb_{\omega,t} \|^2_2 & = \left\| \left[ \begin{array}{c} \zerob \\ \vdots \\ \zerob \\ \Phi(\xb(\omega,t)) \\ \zerob \\ \vdots \\ \zerob \end{array} \right] \right\|^2_2 \\
	& = \phi_1(\xb(\omega,t))^2 + \dots + \phi_K(\xb(\omega,t))^2 \\
	& \leq K Q^2.
	\end{align*}
	Thus, returning to our bound, we have
	\begin{align*}
	\Exp[ \sup_{\bb \in \Bcal} \bb \bullet \Vb ] & \leq  B \sqrt{ \sum_{\omega=1}^{\Omega} \sum_{t=1}^T \| \Vb_{\omega,t} \|^2_2 ] } \\
	& \leq B \sqrt{ \sum_{\omega=1}^{\Omega} \sum_{t=1}^T KQ^2 } \\
	& = B \sqrt{ \Omega T K Q^2 }\\
	& = B Q \sqrt{\Omega K T}.
	\end{align*}
	This implies that the empirical Rademacher complexity can be bounded as 
	\begin{align*}
	\hat{R}(\Bcal) & = \frac{1}{\Omega} \Exp[ \sup_{\bb \in \Bcal} \sum_{\omega=1}^{\Omega} \sum_{t=1}^T \epsilon_{\omega,t} \bb_t \bullet \Phi(\xb(\omega,t))]  \\
	& \leq \frac{1}{\Omega} \cdot B Q \sqrt{\Omega K T}. \\
	& = \frac{BQ \sqrt{KT} }{ \sqrt{\Omega}}.
	\end{align*}

	\noindent \emph{Proof of Part (c):} Using the same definition of the vector $\Vb$ as in the proof of part (b), we can write 
	\begin{align}
	& \Exp \left[ \sup_{\bb \in \Bcal} \sum_{t=1}^T \sum_{\omega = 1}^{\Omega} \epsilon_{\omega,t} \bb_t \bullet \Phi(\xb(\omega,t)) \right] \nonumber \\
	& = \Exp \left[ \sup_{\bb \in \Bcal} \sum_{t=1}^T \bb_t \bullet \left[ \sum_{\omega = 1}^{\Omega} \epsilon_{\omega,t} \Phi(\xb(\omega,t)) \right]  \right]  \nonumber \\
	& = \Exp \left[ \sup_{\bb \in \Bcal} \bb \bullet \Vb \right] \label{eq:Linfty_bb_bullet_Vb}
	\end{align}
	We now observe that for an arbitrary vector $\ab \in \mathbb{R}^n$, the optimal solution to $\max_{\xb \in \mathbb{R}^n: \| \xb \|_{\infty} \leq B } \ab \bullet \xb$ is given by $\xb = B \sign(\ab)$, where $\sign(\ab)$ is an $n$-dimensional vector with each entry carrying the sign of the corresponding coordinate of $\ab$. The objective value is given by $B \sign(\ab) \bullet \ab = B \| \ab \|_1$. Thus, we can bound \eqref{eq:Linfty_bb_bullet_Vb} as follows:
	\begin{align*}
	\Exp[ \sup_{\bb \in \Bcal} \bb \bullet \Vb ] & = B \Exp [ \| \Vb \|_1 ] \\
	& = B \Exp \left[ \sum_{t=1}^T \sum_{k=1}^K  \left| \sum_{\omega=1}^{\Omega} \epsilon_{\omega,t} \phi_k(\xb(\omega,t)) \right| \right] \\
	& = B \sum_{t=1}^T \sum_{k=1}^K \Exp\left[  \left| \sum_{\omega=1}^{\Omega} \epsilon_{\omega,t} \phi_k(\xb(\omega,t)) \right| \right] \\
	& \leq B \sum_{t=1}^T \sum_{k=1}^K \sqrt{ \Exp \left[  ( \sum_{\omega=1}^{\Omega} \epsilon_{\omega,t} \phi_k(\xb(\omega,t)) )^2 \right] } \\
	& \leq B \sum_{t=1}^T \sum_{k=1}^K \sqrt{ \Exp \left[   \sum_{\omega=1}^{\Omega} \epsilon_{\omega,t}^2 \phi_k(\xb(\omega,t))^2 \right] } \\
	& \leq B \sum_{t=1}^T \sum_{k=1}^K \sqrt{ \Omega Q^2  } \\
	& = B Q K T \sqrt{\Omega },
	\end{align*}
	where the second step follows by the definition of $\Vb$; the third step follows by the linearity of expectation; the fourth step follows by the concavity of the square root function and Jensen's inequality; the fifth step by expanding the square of the weighted sum of the $\epsilon_{\omega,t}$'s, and using the independence of the $\epsilon_{\omega,t}$'s to eliminate cross terms; the sixth step by using the definition of $Q$ and the fact that $\epsilon_{\omega,t}^2 = 1$; and the remaining steps by algebra. 
	
	We now bound the Rademacher complexity as 
	\begin{align*}
	\hat{R}(\Bcal) & = \frac{1}{\Omega} \Exp \left[ \sup_{\bb \in \Bcal} \sum_{\omega=1}^{\Omega} \sum_{t=1}^T \epsilon_{\omega,t} \bb_t \bullet \Phi(\xb(\omega,t)) \right]  \\
	& \leq \frac{1}{\Omega} \cdot B K T \sqrt{\Omega } Q \\
	& = \frac{BQKT}{\sqrt{\Omega}},
	\end{align*}
	as required. \Halmos
\end{proof}

\section{Proof of Theorem~\ref{theorem:SAA_NPHard}}
\label{proof_theorem:SAA_NPHard}

We will show that the problem is NP-Hard by showing that the decision version of the MAX-3SAT problem is equivalent to decision version of the randomized policy SAA problem. 

The MAX-3SAT problem is a well-known NP-Complete problem, which can be defined as follows. We are given $N$ binary variables, denoted by $y_1,\dots, y_N$. We also have $M$ clauses, $c_1,\dots, c_M$, where each clause is a disjunction involving three literals (one of the binary variables or its negation). As an example, a clause could be $y_1 \vee y_4 \vee \neg y_5$, which is satisfied if $y_1 = 1$, $y_4 = 1$ or $y_5 = 0$. The optimization form of the MAX-3SAT problem is to find values for the binary variables $y_1,\dots, y_N$ that maximizes the number of satisfied clauses. For our purposes, it will be easier to work with the decision form of the problem, which we state below.

\vspace{1.5em}

\begin{center}
\fbox{
\parbox{0.85\textwidth}{
\textbf{MAX-3SAT} \\
\textbf{Inputs}: 
\begin{itemize}
\item Integers $N$, $M$; 
\item Clauses $c_1,\dots, c_M$ of three literals; 
\item Target number of satisfied clauses $W$.
\end{itemize}
\textbf{Question}: Do there exist binary values $y_1,\dots,y_N$ such that the number of satisfied literals $c_1,\dots,c_M$ is at least $W$? 
}
}
\end{center}

\vspace{1.5em}

We similarly define the decision form of the randomized policy SAA problem. 

\vspace{1.5em}

\begin{center}
\fbox{
\parbox{0.85\textwidth}{
\textbf{Randomized Policy SAA} \\
\textbf{Inputs}: 
\begin{itemize}
\item Integers $\Omega$, $K$, $T$; 
\item State space $\Xcal$; 
\item Basis function mapping $\Phi(\cdot)$; 
\item Reward function $g(\cdot,\cdot)$; 
\item Sample of trajectories $\xb(1,\cdot), \dots, \xb(\Omega,\cdot)$; %
\item Set of feasible weight vectors $\Bcal \subseteq \mathbb{R}^{KT}$; %
\item Target expected reward $\theta$. 
\end{itemize}
\textbf{Question}: Does there exist a weight vector $\bb \in \Bcal$ such that the reward $\hat{J}_R(\bb) \geq \theta$? That is, is the inequality
\begin{equation*}
\frac{1}{\Omega}\sum_{\omega=1}^{\Omega} \sum_{t=1}^T g(t,\xb(\omega,t)) \prod_{t'=1}^{t-1} (1 - \sigma (\bb_{t'} \bullet \Phi(\xb(\omega,t')))) \sigma(\bb_t \bullet \Phi(\xb(\omega,t))) \geq \theta
\end{equation*}
satisfied?
}
}
\end{center}

\vspace{1.5em}

We now show how, for any arbitrary instance of the MAX-3SAT decision problem, we can construct a corresponding instance of the randomized policy SAA decision problem such that the two decision problems are equivalent (the answer to the MAX-3SAT decision problem is yes if and only if the answer to the randomized policy SAA decision problem is yes). We begin by constructing the instance, and then show the equivalence. 

\emph{Construction of instance}: Given a MAX-3SAT decision problem instance, let $\Xcal = \mathbb{R}^N$, and let the basis function mapping $\Phi$ be just equal to the identity mapping, i.e., $\Phi(\xb) = \xb$ for any $\xb \in \Xcal$. Thus, the dimension of the basis function vector $K$ is equal to $N$. 

For the trajectories, we will construct $\Omega = M$ trajectories of $T = 3$ periods. For each clause $m \in [M]$, let $i_{m,1}, i_{m,2}, i_{m,3}$ be the indices of the binary variables that participate in the clause, and let $a_{m,1}, a_{m,2}, a_{m,3}$ be equal to +1 or -1 if the literal is the binary variable itself or its negation, respectively. For example, if the clause were $y_3 \vee \neg y_4 \vee y_7$, then $i_{m,1} = 3$, $i_{m,2} = 4$, $i_{m,3} = 7$, and $a_{m,1} = +1$, $a_{m,2} = -1$, $a_{m,3} = +1$. With these definitions, let us define the trajectories as follows, for each $\omega \in [M]$, each $t \in \{1,2,3\}$: 
\begin{align*}
x_{i}(\omega,t) = \left\{ \begin{array}{ll} a_{m,t} & \text{if}\ i = i_{m,t}, \\ 0 & \text{otherwise}. \end{array}  \right.
\end{align*}
For example, for the previous clause, assuming $N = 8$, then the trajectory would be:
\begin{equation*}
\xb(m,\cdot) = \left[ \begin{array}{ccc} 
0 & 0 & 0 \\ %
0 & 0 & 0 \\ %
+1 & 0 & 0 \\ %
0 & -1 & 0 \\ %
0 & 0 & 0 \\ %
0 & 0 & 0 \\ %
0 & 0 & +1 \\ %
0 & 0 & 0  %
\end{array} \right].
\end{equation*}

For the set of feasible weight vectors, we will define $\Bcal$ as 
\begin{equation*}
\Bcal = \{ \bb \in \mathbb{R}^{KT} \mid b_{k,1} = b_{k,2} = b_{k,3} \ \text{for all} \ k \in [K] \}.
\end{equation*}
In words, the weight vector set $\Bcal$ is such that the weight of basis function $k$ is the same in all three periods. For notational convenience, we will drop the time subscript, and just use the subscript $k$ to refer to the weight of basis function $k$, e.g., $b_{k}$ instead of $b_{k,1}$. 

For the reward function $g(\cdot, \cdot)$, we simply set it as $g(t,\xb) = \Omega$ for all $t \in \{1,2,3\}$ and $\xb \in \Xcal$. 

Lastly, for the target objective value $\theta$, we set it equal to $W - 1/2$. 

To understand the strategy of our construction, let us write out the expected reward:
\begin{align}
\hat{J}_R(\bb) & = \frac{1}{\Omega} \sum_{\omega = 1}^{\Omega} \sum_{t=1}^T g(t,\xb(\omega,t)) \prod_{t'=1}^{t-1} (1 - \sigma (\bb_t \bullet \Phi(\xb(\omega,t')))) \sigma(\bb_t \bullet \Phi(\xb(\omega,t))) \nonumber  \\ 
& = \frac{1}{\Omega} \sum_{\omega = 1}^{\Omega} \Omega [ \sigma( a_{\omega,1} b_{i_{\omega,1}})
+ (1 - \sigma( a_{\omega,1} b_{i_{\omega,1}}) ) \sigma( a_{\omega,2} b_{i_{\omega,2}}) + (1 - \sigma( a_{\omega,1} b_{i_{\omega,1}}))(1 - \sigma(a_{\omega,2} b_{i_{\omega,2}})) \sigma( a_{\omega,3} b_{i_{\omega,3}}) ] \nonumber \\
 & =  \sum_{m = 1}^{M} [ \sigma( a_{m,1} b_{i_{m,1}})
+ (1 - \sigma( a_{m,1} b_{i_{m,1}}) ) \sigma( a_{m,2} b_{i_{m,2}}) + (1 - \sigma( a_{m,1} b_{i_{m,1}}))(1 - \sigma(a_{m,2} b_{i_{m,2}})) \sigma( a_{m,3} b_{i_{m,3}}) ]. \label{eq:soft_max3sat_objective}
\end{align}
To gain some intuition for how this last expression will correspond to the number of satisfied clauses, we make a couple of remarks here.

First, we will see shortly that $b_{i}$ will correspond to the binary variable $y_i$ in the MAX-3SAT problem. The weight $b_i$ can be thought of as a ``soft'' / ``continuous'', real-valued counterpart of the binary variable $y_i$; we want to use very large positive values of $b_i$ to correspond to the variable $y_i$ being equal to 1, and very small negative values of $b_i$ to correspond to the variable $y_i$ being equal to 0. 

Second, to understand how the expression in the square brackets corresponds to a clause evaluating to 1 or 0, observe that we can write a disjunction as the sum of products of the literals. For example, the clause $y_3 \vee \neg y_4 \vee y_7$ we could write as 
\begin{align}
& y_3 + (\neg y_3) \cdot (\neg y_4) + (\neg y_3) \cdot( \neg \neg y_4) \cdot y_7 \nonumber \\
& = y_3 + (1 - y_3)(1 - y_4) + (1 - y_3)(y_4)y_7. \label{eq:sum_of_binaries}
\end{align}
In the above expression, observe that if $y_3 = 1$, then the first term evaluates to 1, and the rest evaluate to 0; otherwise, if $y_3 = 0$ and $y_4 = 0$, then the first term evaluates to 0, the second to 1, and the third to 0; otherwise, if $y_3 = 0$, $y_4 = 1$ and $y_7 = 1$, then the first and second terms evaluate to 0, while the last evaluates to 1. Thus, the two expressions -- the original clause $y_3 \vee \neg y_4 \vee y_7$ and the expression~\eqref{eq:sum_of_binaries} -- are equivalent. The term in the square brackets in \eqref{eq:soft_max3sat_objective} has this same form, and we will see shortly that we can use this to establish our needed equivalence. With a slight abuse of terminology, we will refer to the term in the square brackets in \eqref{eq:soft_max3sat_objective} as the reward of a single trajectory $m$. 

We now proceed with showing the equivalence of the MAX-3SAT decision problem and the randomized policy SAA decision problem with the structure described above. \\[1em]

\emph{MAX-3SAT answer is yes $\Rightarrow$ randomized policy SAA answer is yes}: If the MAX-3SAT decision problem answer is yes, then let $y_1,\dots, y_N$ be an assignment with objective at least $W$. Let $\alpha > 0$ be a positive constant, and define a weight vector $\bb$ for the randomized policy SAA problem as follows:
\begin{equation}
b_i = \left\{ \begin{array}{ll} +\alpha & \text{if} \ y_i = 1, \\
-\alpha & \text{if} \ y_i = 0. \end{array} \right. \label{eq:bb_definition_max3sat}
\end{equation}

Observe now that for a given clause/trajectory $m$, taking the limit as $\alpha \to \infty$ of $\sigma( a_{m,t} b_{i_{m,t}})$ gives us the following:
\begin{align*}
& \lim_{\alpha \to +\infty} \sigma( a_{m,t} b_{i_{m,t}}) \\ 
& = \left\{ \begin{array}{ll} \lim_{\alpha \to +\infty} \sigma ( \alpha ) & \text{if}\ a_{m,t} = +1, y_{i_{m,t}} = 1, \\
\lim_{\alpha \to +\infty} \sigma( -\alpha) & \text{if}\ a_{m,t} = -1, y_{i_{m,t}} = 1, \\
\lim_{\alpha \to +\infty} \sigma( -\alpha) & \text{if}\ a_{m,t} = +1, y_{i_{m,t}} = 0, \\
\lim_{\alpha \to +\infty} \sigma( +\alpha) & \text{if}\ a_{m,t} = -1, y_{i_{m,t}} = 0 \end{array} \right. \\
& = \left\{ \begin{array}{ll} 1 & \text{if}\ a_{m,t} = +1, y_{i_{m,t}} = 1, \\
0 & \text{if}\ a_{m,t} = -1, y_{i_{m,t}} = 1, \\
0 & \text{if}\ a_{m,t} = +1, y_{i_{m,t}} = 0, \\
1 & \text{if}\ a_{m,t} = -1, y_{i_{m,t}} = 0 \end{array} \right.  \\
& = \left\{ \begin{array}{ll} y_{i_{m,t}} & \text{if} \ a_{m,t} = +1, \\
\neg y_{i_{m,t}} & \text{if} \ a_{m,t} = -1 \end{array} \right.
\end{align*}
In other words, as $\alpha \to \infty$, $\sigma( a_{m,t} b_{i_{m,t}})$ evaluates to exactly the $t$th literal of clause $m$. By our aforementioned equivalence of a disjunction and a sum of products of binary variables (as in the example in equation~\eqref{eq:sum_of_binaries}), it follows that
\begin{align*}
\lim_{\alpha \to + \infty} \hat{J}_R(\bb) & = \lim_{\alpha \to +\infty} \sum_{m = 1}^{M} [ \sigma( a_{m,1} b_{i_{m,1}})
+ (1 - \sigma( a_{m,1} b_{i_{m,1}}) ) \sigma( a_{m,2} b_{i_{m,2}})   \\
& \phantom{= \lim_{\alpha \to +\infty} \sum_{m = 1}^{M} } + (1 - \sigma( a_{m,1} b_{i_{m,1}}))(1 - \sigma(a_{m,2} b_{i_{m,2}})) \sigma( a_{m,3} b_{i_{m,3}}) ] \\
& = \sum_{m=1}^M c_m,
\end{align*}
i.e., the limit as $\alpha$ goes to infinity is exactly equal to the number of satisfied clauses in the MAX-3SAT solution $y_1,\dots, y_N$. Since the answer to the MAX-3SAT decision problem is yes, we know that $\sum_{m=1}^M c_m \geq W$, so that the limit $\lim_{\alpha \to +\infty} \hat{J}_R(\bb) \geq W$ as well. Since the limit is at least $W$, it follows that there must exist an $\alpha$, and thus a corresponding $\bb$ (as defined in \eqref{eq:bb_definition_max3sat}) such that $\hat{J}_R(\bb) \geq W - 1/2$. \\[1em]

\emph{Randomized policy SAA answer is yes $\Rightarrow$ MAX-3SAT answer is yes}: To show the other direction of the equivalence, let us suppose we have a solution $\bb$ for the randomized policy SAA problem with objective value $\hat{J}_R(\bb) \geq W - 1/2$. We now need to construct a solution for the MAX-3SAT decision problem with objective value at least $W$.

Let us use $c_m( y_1, \dots, y_N )$ to denote the value of clause $m$ as a function of the binary variables $y_1, ..., y_N$. 
We claim that 
\begin{equation}
\hat{J}_R(\bb) = \Exp \left[ \sum_{m=1}^M c_m( \Ibb\{ \xi_1 \leq b_1\}, \dots, \Ibb\{ \xi_N \leq b_N \} ) \right], \label{eq:equivalence_hatJ_expectationoverxis}
\end{equation} 
where $\xi_1,\dots, \xi_N$ are i.i.d. standard logistic random variables (i.e., $\Pbb( \xi_i \leq t ) = \sigma(t)$ for all the variables $i$). Once we show this, we can use the probabilistic method to assert the existence of $y_1,\dots, y_N$ that give an affirmative answer to the MAX-3SAT problem.

To show the equivalence~\eqref{eq:equivalence_hatJ_expectationoverxis}, we argue that for any clause $m$,
\begin{align}
& \Exp[ c_m( \Ibb\{ \xi_1 \leq b_1\}, \dots, \Ibb\{ \xi_N \leq b_N \} )] \nonumber \\
& = \sigma( a_{m,1} b_{i_{m,1}})
+ (1 - \sigma( a_{m,1} b_{i_{m,1}}) ) \sigma( a_{m,2} b_{i_{m,2}}) + (1 - \sigma( a_{m,1} b_{i_{m,1}}))(1 - \sigma(a_{m,2} b_{i_{m,2}})) \sigma( a_{m,3} b_{i_{m,3}}). \label{eq:single_clause_equals_single_reward}
\end{align}
To see why this must be true, we argue by way of an example. Consider again the example clause $y_3 \vee \neg y_4 \vee y_7$. Consider the right-hand side of \eqref{eq:single_clause_equals_single_reward}, which is the reward of the corresponding trajectory, after we substitute in the values of the $a_{m,t}$'s. This right hand side works out to
\begin{equation*}
\sigma(b_3) + (1 - \sigma(b_3))\sigma(-b_4) + (1 - \sigma(b_3)(1 - \sigma(-b_4)) \sigma(b_7). 
\end{equation*}
We now use an important property of the logistic response function $\sigma$, which is that for any real $u$, $\sigma(u) = 1 - \sigma(-u)$. Therefore, we can readily modify the above expression so that the coefficient of any $b_i$ is always +1:
\begin{equation*}
\sigma(b_3) + (1 - \sigma(b_3))(1 - \sigma(b_4)) + (1 - \sigma(b_3)\sigma(b_4) \sigma(b_7). 
\end{equation*}
Letting $\xi_1,\dots, \xi_N$ denote i.i.d. standard logistic random variables, the above can be equivalently written as 
\begin{align}
& \Pbb( \xi_3 \leq b_3) + (1 - \Pbb(\xi_3 \leq b_3))(1 - \Pbb( \xi_4 \leq b_4)) + (1 - \Pbb(\xi_3 \leq b_3))\cdot \Pbb(\xi_4 \leq b_4) \cdot \Pbb( \xi_7 \leq b_7) \\
& = \Exp[ \Ibb\{ \xi_3 \leq b_3\} ] + \Exp[ 1 - \Ibb\{ \xi_3 \leq b_3\}] \Exp[ 1 - \Ibb\{ \xi_4 \leq b_4\}] + \Exp[ 1 - \Ibb\{ \xi_3 \leq b_3\}] \Exp[ \Ibb\{ \xi_4 \leq b_4\}] \Exp[ \Ibb\{ \xi_7 \leq b_7\}] \nonumber \\
& = \Exp[ \Ibb\{ \xi_3 \leq b_3\}  + (1 - \Ibb\{ \xi_3 \leq b_3\})(1 - \Ibb\{ \xi_4 \leq b_4\}) + 
(1 - \Ibb\{ \xi_3 \leq b_3\}) \Ibb\{ \xi_4 \leq b_4\} \Ibb\{ \xi_7 \leq b_7\} ], \label{eq:max3sat_rv_expectedvalue}
\end{align}
where the equality on the final line follows by the independence of the $\xi$'s and the linearity of expectation. Now, let $y_3 = \Ibb\{ \xi_3 \leq b_3 \}$, $y_4 = \Ibb\{ \xi_4 \leq b_4\}$ and $y_7 = \Ibb\{ \xi_7 \leq b_7 \}$. Observe that the expression inside the expectation in \eqref{eq:max3sat_rv_expectedvalue} can be written as 
\begin{equation*}
y_3 + (1 - y_3)(1 - y_4) + (1 - y_3) y_4 y_7
\end{equation*}
which is logically identical to $y_3 \vee \neg y_4 \vee y_7$. Thus, in this example, it follows that equation~\eqref{eq:single_clause_equals_single_reward} holds. Note that there is nothing special in the particular clause that we chose; the same procedure, which involves using the identity $\sigma(-u) = 1 - \sigma(u)$ to eliminate any term of the form $\sigma(-b_i)$ that appears in the right-hand side of \eqref{eq:single_clause_equals_single_reward}, can be used to turn the right-hand side of \eqref{eq:single_clause_equals_single_reward} into the expected value of the clause function $c_m(y_1,\dots, y_N)$ when one replaces each $y_i$ with $\Ibb\{ \xi_i \leq b_i\}$. 

Since \eqref{eq:single_clause_equals_single_reward} holds, by linearity of expectation it must be the case that \eqref{eq:equivalence_hatJ_expectationoverxis} also holds. Consequently, there must exist values $\xi'_1,\dots, \xi'_N$ of the random variables $\xi_1,\dots, \xi_N$ which satisfy the following:
\begin{align}
& \Exp[ \sum_{m=1}^M c_m( \Ibb\{ \xi_1 \leq b_1\}, \dots, \Ibb\{ \xi_N \leq b_N \} ) ] \nonumber \\
& \leq \sum_{m=1}^M c_m( \Ibb\{ \xi'_1 \leq b_1\}, \dots, \Ibb\{ \xi'_N \leq b_N \} ). \label{eq:max3sat_xiprime_geq_expectation}
\end{align}
Define now a candidate solution to the MAX-3SAT problem $y_1,\dots,y_N$ as $y_i = \Ibb\{ \xi'_i \leq b_i\}$ for each $i$. By \eqref{eq:max3sat_xiprime_geq_expectation} and \eqref{eq:equivalence_hatJ_expectationoverxis}, we have
\begin{equation*}
\sum_{m=1}^M c_m( y_1,\dots, y_N ) \geq \hat{J}_R(\bb).
\end{equation*}
Recall that $\hat{J}_R(\bb) \geq W - 1/2$, so we further have that 
\begin{equation*}
\sum_{m=1}^M c_m( y_1, \dots, y_N) \geq W - 1/2.
\end{equation*}
Since $W$ is an integer, and the number of satisfied clauses must also be an integer, the above is equivalent to
\begin{equation*}
\sum_{m=1}^M c_m( y_1, \dots, y_N) \geq W,
\end{equation*}
which shows that the answer to the MAX-3SAT decision problem is yes. \\

We have shown that the MAX-3SAT decision problem and randomized policy SAA decision problem are equivalent for the constructed instance of the randomized policy SAA problem. Since the particular instance of the randomized policy SAA decision problem can be constructed in polynomial time, and since the MAX-3SAT problem is NP-Complete \citep{garey2002computers}, it follows that the randomized policy SAA decision problem is NP-Hard. \Halmos

\section{Additional numerical results}

\subsection{Warm starting of RPO method using LSM}
\label{subsec:additional_numerics_warmstart}

In this section, we briefly describe how we use the LSM solution to warm start each solve of problem~\eqref{prob:backward_period_t_algo}. Suppose that the basis function set contains \textsc{payoff}, i.e., the undiscounted payoff $g'(t)$ is a basis function. Let $\bb_t = (b_{t,1}, \dots, b_{t,K})$ be the vector of weights for the LSM algorithm, as we have defined it in Section~\ref{subsec:solution_methodology_comparison_LSM} (Algorithm~\ref{algorithm:LSM}). The LSM policy stops at time $t$ if and only if 
\begin{equation*}
g(t) > \sum_{k=1}^K b_{t,k} \phi_k(\xb(t)).
\end{equation*}
Using the fact that $g(t) = \beta^{t} g'(t) = \beta^{t} \phi_K(\xb(t))$, we can re-write this as 
\begin{align*}
g(t) - \sum_{k=1}^K b_{t,k} \phi_k(\xb(t)) & > 0\\
\Rightarrow \beta^{t} \phi_K(\xb(t)) - \sum_{k=1}^K b_{t,k} \phi_k(\xb(t)) & > 0 \\
\Rightarrow \sum_{k=1}^K b'_{t,k} \phi_k(\xb(t)) & > 0,
\end{align*}
where the vector $\bb'_t$ is defined as $\bb'_t = ( - \beta^{-t} b_{t,1}, \dots, - \beta^{-t} b_{t,K-1}, 1 - \beta^{-t} b_{t,K})$.

Observe that, as discussed in Section~\ref{subsec:solution_methodology_comparison_LSM}, $\bb'_t$ can be viewed as a weight vector defining a deterministic linear policy at time $t$, that would behave identically to the LSM policy at time $t$. At the same time, one can also treat $\bb'_t$ as a candidate weight vector for a randomized policy at time $t$. Thus, our warm starting strategy is to simply use $\bb'_t$ as the initial solution to problem~\eqref{prob:backward_period_t_algo}.

\subsection{Additional policy performance results for Section~\ref{subsec:numerics_neq8}}
\label{subsec:additional_numerics_neq4_16}

Table~\ref{table:DFM_performance_neq4} displays the results comparing LSM, PO and RPO for instances with $n = 4$ assets, while Table~\ref{table:DFM_performance_neq16} displays analogous results for $n = 16$ assets. Note that for $n = 16$ assets, we omit the results for PO for the basis function architecture containing the second-order price basis functions (\textsc{prices2KO}) due to the significant computational effort required for the PO method in this case. 

\begin{table}
\centering
\footnotesize
\begin{tabular}{lllll} \toprule 
& & \multicolumn{3}{c}{Initial price} \\
Method & Basis function architecture & $\bar{p} = 90$ & $\bar{p} = 100$ & $\bar{p} = 110$ \\ \midrule
  LSM & \textsc{one} & 24.68\enskip (0.019) & 31.78\enskip (0.016) & 37.45\enskip (0.038) \\ 
  LSM & \textsc{one, payoff} & 32.84\enskip (0.030) & 40.02\enskip (0.047) & 43.16\enskip (0.043) \\ 
  PO & \textsc{one} & 30.84\enskip (0.024) & 38.97\enskip (0.019) & 44.57\enskip (0.027) \\ 
  PO & \textsc{one, payoff} & 22.67\enskip (0.167) & 20.77\enskip (0.126) & 16.53\enskip (0.127) \\ 
  RPO & \textsc{one, payoff} & \bfseries 34.48\enskip (0.020) & \bfseries 42.92\enskip (0.020) & \bfseries 49.16\enskip (0.020) \\[0.5em] 
  PO-UB & \textsc{one} & 43.23\enskip (0.032) & 51.11\enskip (0.024) & 56.46\enskip (0.022) \\ 
  PO-UB & \textsc{one, payoff} & 35.11\enskip (0.023) & 43.94\enskip (0.034) & 50.55\enskip (0.032) \\ \midrule

  LSM & \textsc{prices} & 25.74\enskip (0.025) & 32.08\enskip (0.025) & 37.38\enskip (0.040) \\ 
  LSM & \textsc{prices, payoff} & 32.34\enskip (0.021) & 38.14\enskip (0.040) & 40.74\enskip (0.030) \\ 
  PO & \textsc{prices} & 31.40\enskip (0.023) & 38.92\enskip (0.015) & 43.42\enskip (0.017) \\ 
  PO & \textsc{prices, payoff} & 23.04\enskip (0.138) & 19.94\enskip (0.099) & 15.63\enskip (0.095) \\ 
  RPO & \textsc{prices, payoff} & \bfseries 33.96\enskip (0.018) & \bfseries 42.03\enskip (0.013) & \bfseries 47.89\enskip (0.020) \\[0.5em] 
  PO-UB & \textsc{prices} & 40.57\enskip (0.022) & 49.27\enskip (0.011) & 55.62\enskip (0.018) \\ 
  PO-UB & \textsc{prices, payoff} & 35.11\enskip (0.023) & 43.94\enskip (0.034) & 50.53\enskip (0.032) \\ \midrule

  LSM & \textsc{pricesKO} & 28.53\enskip (0.029) & 38.34\enskip (0.018) & 46.55\enskip (0.034) \\ 
  LSM & \textsc{pricesKO, payoff} & 33.45\enskip (0.018) & 41.71\enskip (0.019) & 47.73\enskip (0.016) \\ 
  PO & \textsc{pricesKO} & 32.68\enskip (0.024) & 41.84\enskip (0.016) & 47.78\enskip (0.018) \\ 
  PO & \textsc{pricesKO, payoff} & 32.67\enskip (0.027) & 41.52\enskip (0.020) & 48.02\enskip (0.019) \\ 
  RPO & \textsc{pricesKO, payoff} & \bfseries 33.98\enskip (0.020) & \bfseries 42.14\enskip (0.017) & \bfseries 48.17\enskip (0.016) \\[0.5em] 
  PO-UB & \textsc{pricesKO} & 39.52\enskip (0.020) & 46.89\enskip (0.012) & 51.89\enskip (0.012) \\ 
  PO-UB & \textsc{pricesKO, payoff} & 35.07\enskip (0.020) & 43.79\enskip (0.030) & 50.17\enskip (0.026) \\ \midrule
  
LSM & \textsc{KOind} & 26.19\enskip (0.027) & 35.61\enskip (0.020) & 44.02\enskip (0.048) \\ 
  LSM & \textsc{KOind, payoff} & 33.39\enskip (0.028) & 41.89\enskip (0.028) & 48.06\enskip (0.022) \\ 
  PO & \textsc{KOind} & 31.51\enskip (0.025) & 41.04\enskip (0.018) & 48.43\enskip (0.024) \\ 
  PO & \textsc{KOind, payoff} & 32.22\enskip (0.047) & 42.28\enskip (0.029) & 49.01\enskip (0.016) \\ 
  RPO & \textsc{KOind, payoff} & \bfseries 34.53\enskip (0.020) & \bfseries 43.07\enskip (0.020) & \bfseries 49.39\enskip (0.019) \\[0.5em] 
  PO-UB & \textsc{KOind} & 41.46\enskip (0.028) & 48.38\enskip (0.022) & 52.83\enskip (0.018) \\ 
  PO-UB & \textsc{KOind, payoff} & 35.08\enskip (0.021) & 43.79\enskip (0.031) & 50.18\enskip (0.027) \\ \midrule

  LSM & \textsc{pricesKO, KOind} & 30.23\enskip (0.030) & 39.07\enskip (0.015) & 46.59\enskip (0.029) \\ 
  LSM & \textsc{pricesKO, KOind, payoff} & 32.72\enskip (0.023) & 41.24\enskip (0.023) & 47.74\enskip (0.025) \\ 
  PO & \textsc{pricesKO, KOind} & 31.88\enskip (0.019) & 40.61\enskip (0.027) & 48.41\enskip (0.025) \\ 
  PO & \textsc{pricesKO, KOind, payoff} & 31.40\enskip (0.030) & 40.59\enskip (0.019) & \bfseries 48.45\enskip (0.020) \\ 
  RPO & \textsc{pricesKO, KOind, payoff} & \bfseries 32.95\enskip (0.023) & \bfseries 41.42\enskip (0.025) & 48.09\enskip (0.036) \\[0.5em] 
  PO-UB & \textsc{pricesKO, KOind} & 38.82\enskip (0.016) & 46.45\enskip (0.016) & 51.75\enskip (0.014) \\ 
  PO-UB & \textsc{pricesKO, KOind, payoff} & 35.07\enskip (0.021) & 43.78\enskip (0.030) & 50.16\enskip (0.026) \\ \midrule

  LSM & \textsc{pricesKO, prices2KO, KOind} & 31.92\enskip (0.032) & 40.93\enskip (0.014) & 47.74\enskip (0.019) \\ 
  LSM & \textsc{pricesKO, prices2KO, KOind, payoff} & 33.41\enskip (0.023) & 41.82\enskip (0.021) & 48.02\enskip (0.021) \\ 
  PO & \textsc{pricesKO, prices2KO, KOind} & 32.18\enskip (0.028) & 41.88\enskip (0.017) & 48.73\enskip (0.015) \\ 
  PO & \textsc{pricesKO, prices2KO, KOind, payoff} & 33.66\enskip (0.021) & 42.48\enskip (0.017) & 48.78\enskip (0.015) \\ 
  RPO & \textsc{pricesKO, prices2KO, KOind, payoff} & \bfseries 33.97\enskip (0.026) & \bfseries 42.59\enskip (0.021) & \bfseries 48.93\enskip (0.022) \\[0.5em] 
  PO-UB & \textsc{pricesKO, prices2KO, KOind} & 36.30\enskip (0.010) & 44.56\enskip (0.011) & 50.51\enskip (0.011) \\ 
  PO-UB & \textsc{pricesKO, prices2KO, KOind, payoff} & 35.07\enskip (0.021) & 43.74\enskip (0.025) & 50.08\enskip (0.023) \\ \midrule

  LSM & \textsc{pricesKO, KOind, maxpriceKO, max2priceKO} & 32.93\enskip (0.023) & 41.37\enskip (0.020) & 47.81\enskip (0.025) \\ 
  LSM & \textsc{pricesKO, KOind, maxpriceKO, max2priceKO, payoff} & 32.99\enskip (0.025) & 41.38\enskip (0.018) & 47.79\enskip (0.024) \\ 
  PO & \textsc{pricesKO, KOind, maxpriceKO, max2priceKO} & 32.52\enskip (0.024) & 40.92\enskip (0.020) & 48.48\enskip (0.019) \\ 
  PO & \textsc{pricesKO, KOind, maxpriceKO, max2priceKO, payoff} & 32.23\enskip (0.027) & 41.12\enskip (0.020) & \bfseries 48.49\enskip (0.018) \\ 
  RPO & \textsc{pricesKO, KOind, maxpriceKO, max2priceKO, payoff} & \bfseries 33.23\enskip (0.024) & \bfseries 41.59\enskip (0.022) & 48.16\enskip (0.035) \\[0.5em] 
  PO-UB & \textsc{pricesKO, KOind, maxpriceKO, max2priceKO} & 35.38\enskip (0.020) & 43.84\enskip (0.029) & 50.17\enskip (0.025) \\ 
  PO-UB & \textsc{pricesKO, KOind, maxpriceKO, max2priceKO, payoff} & 35.06\enskip (0.022) & 43.77\enskip (0.030) & 50.16\enskip (0.025) \\ \bottomrule

\end{tabular}

\caption{Out-of-sample performance for different policies, for $n = 4$ assets. \label{table:DFM_performance_neq4}}

\end{table}

\begin{table}
\centering
\footnotesize
\begin{tabular}{lllll} \toprule 
& & \multicolumn{3}{c}{Initial price} \\
Method & Basis function architecture & $\bar{p} = 90$ & $\bar{p} = 100$ & $\bar{p} = 110$ \\ \midrule
  LSM & \textsc{one} & 39.08\enskip (0.015) & 43.20\enskip (0.016) & 47.14\enskip (0.017) \\ 
  LSM & \textsc{one, payoff} & 43.15\enskip (0.033) & 45.15\enskip (0.016) & 47.47\enskip (0.020) \\ 
  PO & \textsc{one} & 46.29\enskip (0.018) & 48.93\enskip (0.014) & 51.07\enskip (0.009) \\ 
  PO & \textsc{one, payoff} & 18.10\enskip (0.142) & 15.89\enskip (0.277) & 34.50\enskip (0.257) \\ 
  RPO & \textsc{one, payoff} & \bfseries 51.52\enskip (0.028) & \bfseries 52.73\enskip (0.040) & \bfseries 53.60\enskip (0.028) \\[0.5em] 
  PO-UB & \textsc{one} & 57.57\enskip (0.008) & 60.29\enskip (0.011) & 61.87\enskip (0.007) \\ 
  PO-UB & \textsc{one, payoff} & 53.21\enskip (0.035) & 56.11\enskip (0.037) & 57.40\enskip (0.039) \\ \midrule

  LSM & \textsc{prices} & 38.97\enskip (0.019) & 43.12\enskip (0.017) & 47.06\enskip (0.018) \\ 
  LSM & \textsc{prices, payoff} & 42.22\enskip (0.026) & 44.55\enskip (0.019) & 47.13\enskip (0.021) \\ 
  PO & \textsc{prices} & 45.57\enskip (0.016) & 48.05\enskip (0.013) & 50.37\enskip (0.007) \\ 
  PO & \textsc{prices, payoff} & 18.14\enskip (0.098) & 16.35\enskip (0.242) & 34.82\enskip (0.081) \\ 
  RPO & \textsc{prices, payoff} & \bfseries 50.00\enskip (0.033) & \bfseries 52.07\enskip (0.028) & \bfseries 53.57\enskip (0.032) \\[0.5em] 
  PO-UB & \textsc{prices} & 57.47\enskip (0.004) & 60.27\enskip (0.011) & 61.84\enskip (0.008) \\ 
  PO-UB & \textsc{prices, payoff} & 53.16\enskip (0.033) & 56.03\enskip (0.034) & 57.31\enskip (0.037) \\ \midrule
  
    LSM & \textsc{pricesKO} & 50.31\enskip (0.009) & 53.39\enskip (0.011) & 54.70\enskip (0.008) \\ 
  LSM & \textsc{pricesKO, payoff} & 50.28\enskip (0.011) & 52.93\enskip (0.010) & 54.46\enskip (0.009) \\ 
  PO & \textsc{pricesKO} & 50.84\enskip (0.010) & 53.44\enskip (0.011) & 55.03\enskip (0.008) \\ 
  PO & \textsc{pricesKO, payoff} & 50.83\enskip (0.009) & 53.45\enskip (0.008) & 54.95\enskip (0.006) \\ 
  RPO & \textsc{pricesKO, payoff} & \bfseries 50.92\enskip (0.010) &\bfseries 53.60\enskip (0.010) & \bfseries 55.22\enskip (0.010) \\[0.5em] 
  PO-UB & \textsc{pricesKO} & 53.31\enskip (0.007) & 55.44\enskip (0.007) & 56.70\enskip (0.006) \\ 
  PO-UB & \textsc{pricesKO, payoff} & 52.49\enskip (0.022) & 55.07\enskip (0.017) & 56.41\enskip (0.016) \\ \midrule

LSM & \textsc{KOind} & 49.83\enskip (0.015) & 53.79\enskip (0.012) & 55.15\enskip (0.007) \\ 
  LSM & \textsc{KOind, payoff} & 50.66\enskip (0.015) & 53.36\enskip (0.008) & 54.84\enskip (0.008) \\ 
  PO & \textsc{KOind} & 51.59\enskip (0.012) & 54.46\enskip (0.012) & 55.73\enskip (0.006) \\ 
  PO & \textsc{KOind, payoff} & 51.38\enskip (0.012) & 53.96\enskip (0.008) & 55.31\enskip (0.007) \\ 
  RPO & \textsc{KOind, payoff} & \bfseries 51.93\enskip (0.011) & \bfseries 54.58\enskip (0.013) & \bfseries 55.97\enskip (0.007) \\[0.5em] 
  PO-UB & \textsc{KOind} & 53.47\enskip (0.009) & 55.49\enskip (0.007) & 56.74\enskip (0.006) \\ 
  PO-UB & \textsc{KOind, payoff} & 52.50\enskip (0.021) & 55.06\enskip (0.014) & 56.40\enskip (0.015) \\ \midrule

  LSM & \textsc{pricesKO, KOind} & 50.38\enskip (0.011) & 53.70\enskip (0.010) & 54.99\enskip (0.009) \\ 
  LSM & \textsc{pricesKO, KOind, payoff} & 50.50\enskip (0.013) & 53.28\enskip (0.010) & 54.79\enskip (0.009) \\ 
  PO & \textsc{pricesKO, KOind} & \bfseries 51.60\enskip (0.011) & 54.34\enskip (0.010) & 55.55\enskip (0.005) \\ 
  PO & \textsc{pricesKO, KOind, payoff} & 51.27\enskip (0.011) & 53.91\enskip (0.008) & 55.29\enskip (0.008) \\ 
  RPO & \textsc{pricesKO, KOind, payoff} &  51.41\enskip (0.013) & \bfseries 54.38\enskip (0.014) & \bfseries 55.87\enskip (0.008) \\[0.5em] 
  PO-UB & \textsc{pricesKO, KOind} & 53.30\enskip (0.008) & 55.43\enskip (0.005) & 56.69\enskip (0.005) \\ 
  PO-UB & \textsc{pricesKO, KOind, payoff} & 52.48\enskip (0.021) & 55.04\enskip (0.014) & 56.38\enskip (0.015) \\ \midrule

  LSM & \textsc{pricesKO, KOind, maxpriceKO, max2priceKO} & 50.50\enskip (0.008) & 53.26\enskip (0.013) & 54.79\enskip (0.014) \\ 
  LSM & \textsc{pricesKO, KOind, maxpriceKO, max2priceKO, payoff} & 50.49\enskip (0.009) & 53.26\enskip (0.013) & 54.79\enskip (0.014) \\ 
  PO & \textsc{pricesKO, KOind, maxpriceKO, max2priceKO} & 51.23\enskip (0.010) & 53.89\enskip (0.012) & 55.28\enskip (0.011) \\ 
  PO & \textsc{pricesKO, KOind, maxpriceKO, max2priceKO, payoff} & 51.23\enskip (0.011) & 53.89\enskip (0.011) & 55.28\enskip (0.011) \\ 
  RPO & \textsc{pricesKO, KOind, maxpriceKO, max2priceKO, payoff} & \bfseries 51.39\enskip (0.017) & \bfseries 54.37\enskip (0.016) & \bfseries 55.84\enskip (0.012) \\[0.5em] 
  PO-UB & \textsc{pricesKO, KOind, maxpriceKO, max2priceKO} & 52.48\enskip (0.027) & 55.04\enskip (0.019) & 56.38\enskip (0.018) \\ 
  PO-UB & \textsc{pricesKO, KOind, maxpriceKO, max2priceKO, payoff} & 52.40\enskip (0.039) & 54.90\enskip (0.082) & 56.38\enskip (0.019) \\ \midrule

  LSM & \textsc{pricesKO, prices2KO, KOind} & 50.32\enskip (0.014) & 53.19\enskip (0.010) & 54.61\enskip (0.008) \\ 
  LSM & \textsc{pricesKO, prices2KO, KOind, payoff} & 50.25\enskip (0.016) & 53.05\enskip (0.010) & 54.60\enskip (0.008) \\ 
  RPO & \textsc{pricesKO, prices2KO, KOind, payoff} & \bfseries 50.94\enskip (0.021) & \bfseries 53.78\enskip (0.019) & \bfseries 55.24\enskip (0.033) \\ \bottomrule
\end{tabular}

\caption{Out-of-sample performance for different policies, for $n = 16$ assets. \label{table:DFM_performance_neq16}}
\end{table}

\end{document}